\def\sD{{\mathfrak D}}      
   \def\sH{{\mathfrak H}}   
   \def\sK{{\mathfrak K}}   \def\sL{{\mathfrak L}}
\def\sM{{\mathfrak M}}   \def\sN{{\mathfrak N}}   
      \def\sR{{\mathfrak R}}
\def\st{{\mathfrak t}}
      \def\dC{{\mathbb C}}
\def\dD{{\mathbb D}}
   \def\dN{{\mathbb N}}   
      \def\dR{{\mathbb R}}
   \def\cB{{\mathcal B}}   \def\cC{{\mathcal C}}
\def\cD{{\mathcal D}}      
   \def\cH{{\mathcal H}}   
   \def\cK{{\mathcal K}}   \def\cL{{\mathcal L}}
\def\cM{{\mathcal M}}   \def\cN{{\mathcal N}}   
   \def\cQ{{\mathcal Q}}   \def\cR{{\mathcal R}}
\def\cS{{\mathcal S}}      
\def\cV{{\mathcal V}}      \def\cX{{\mathcal X}}
\def\cY{{\mathcal Y}}   \def\cZ{{\mathcal Z}}
   \def\bB{{\mathbf B}}
\def\cRS{\mathcal{RS}}
\def\half{{\frac{1}{2}}}
\def\wt{\widetilde}
\def\d1{{\mathcal D}}
\def\CM{{\cM}}
\def\half{{\frac{1}{2}}}
\def\RE{{\rm Re\,}}
\def\IM{{\rm Im\,}}
\def\ran{{\rm ran\,}}
\def\cran{{\rm \overline{ran}\,}}
\def\dom{{\rm dom\,}}
\def\mul{{\rm mul\,}}
\def\cdom{{\rm \overline{dom}\,}}
\def\cspan{{\rm \overline{span}\, }}
\def\cmr{{\dC \backslash \dR}}
\def\uphar{{\upharpoonright\,}}
\def\wt{\widetilde}
\def\f{\varphi}
\def\half{{\frac{1}{2}}}
\newtheorem{theorem}{Theorem}[section]
\newtheorem{lemma}[theorem]{Lemma}
\newtheorem{proposition}[theorem]{Proposition}
\newtheorem{corollary}[theorem]{Corollary}
\newtheorem{definition}[theorem]{Definition}
\newtheorem{remark}[theorem]{Remark}
\newtheorem{example}[theorem]{Example}
\numberwithin{equation}{section}
\def\RE{{\rm Re\,}}
\def\IM{{\rm Im\,}}
\def\wt{\widetilde}
\def\wh{\widehat}
\def\f{\varphi}
\def\uphar{{\upharpoonright\,}}
\begin{document}

\title[Stieltjes and inverse Stieltjes families]
{Representations of closed quadratic forms associated with Stieltjes and inverse Stieltjes holomorphic families of linear relations}

\author[Yury Arlinski\u{\i}]{Yu.M.~Arlinski\u{\i}}
\address{Stuttgart, Germany}

\email{yury.arlinskii@gmail.com}
\author[Seppo Hassi]{S. Hassi}
\address{Department of Mathematics and Statistics  \\
University of Vaasa  \\
P.O. Box 700  \\
65101 Vaasa  \\
Finland} \email{sha@uwasa.fi}

\subjclass[2020]
{47A06, 47A56, 47B25, 47B44}

\vskip 1truecm
\thispagestyle{empty}
\baselineskip=12pt

\dedicatory{Dedicated to our friend and colleague Vladimir Derkach on the occasion of his
seventieth birthday}

\keywords{Nevanlinna family, Stieltjes family, inverse Stieltjes family,  maximal sectorial linear relation, closed quadratic form}


\begin{abstract}
In this paper holomorphic families of linear relations which belong to the Stieltjes or inverse Stieltjes class are studied.
It is shown that in their domain of holomorphy $\dC\setminus\dR_+$ the values of Stieltjes and inverse Stieltjes families are, up to a rotation, maximal sectorial. This leads to a study of the associated closed sesquilinear forms and their representations.
In particular, it is shown that the Stieltjes and inverse Stieltjes holomorphic families of linear relations are of type (B) in the sense of Kato. These results are proved by using linear fractional transforms which connect these families to holomorphic functions that belong to a combined Nevanlinna-Schur class and a key tool then relies on a specific structure of contractive operators.
\end{abstract}

\maketitle

\section{Introduction}

The main objective of the present paper is a further study of \textit{Stieltjes} and  \textit{inverse Stieltjes} operator-valued functions, or more generally, Stieltjes and inverse Stieltjes holomorphic families of linear relations (l.r. for short), see \cite{ArlHassi_2019[2]}. Elements in these classes are Nevanlinna functions \cite{AllenNarcow1976,ArlBelTsek2011, BHS2020, Br, DM2017, GesTsek2000, KacK, Shmul71} or, more generally, Nevanlinna families \cite{DHMS06, DdeS, KL1,KL2,LT} which admit holomorphic continuation to the negative semi-axis $\dR_-$.
Recall, cf. e.g. \cite{BHS2020,DHMS06}, that a family of l.r.'s $\CM(\lambda)$, $\lambda \in \cmr$, in a Hilbert space $\sM$ is called a \textit{Nevanlinna family} if:

(1) $\CM(\lambda)$ is maximal dissipative for every $\lambda \in \dC_+$
(resp. accumulative for every $\lambda \in \dC_-$);
(2) $\CM(\lambda)^*=\cM(\bar \lambda)$, $\lambda \in \cmr$;
(3) for some, and hence for all, $\mu \in \dC_+ (\dC_-)$ the
      operator family $(\CM(\lambda)+\mu)^{-1} (\in [\sM])$ is
      holomorphic for all $\lambda \in \dC_+ (\dC_-)$.

The class of all Nevanlinna families in a Hilbert space $\sM$ is denoted by $\wt R(\sM)$.
Each Nevanlinna familiy $\CM$ admits the decomposition
\begin{equation}\label{hfpkj}
  \CM(\lambda)=M_s(\lambda)\oplus M_\infty,\quad
  M_\infty=\{0\}\times\mul \CM(\lambda).
\end{equation}
to the constant multi-valued part $M_\infty$ and the operator part $M_s(\lambda)$ (a Nevanlinna family of densely defined
operators in $\sM \ominus \mul \CM(\lambda)$,
$\lambda\in\cmr$).

Stieltjes and inverse Stieltjes families are particular type of Nevanlinna families being defined as follows.

\begin{definition}\label{invStieltjes}\cite{ArlHassi_2019[2]}.
A family of l.r.'s $\CM(\lambda)$ in a Hilbert space $\sM$ defined on an open connected set $\lambda \in \dC\setminus\dR_+$ is said to be a \textit{Stieltjes family} (respectively, \textit{inverse Stieltjes family})
if it is a Nevanlinna family for $\lambda\in\cmr$ and, moreover,
\begin{enumerate}
\item for all $x<0$ the l.r.'s $\cM(x)$ are selfadjoint and nonnegative (respectively, nonpositive),
\item the family $\cM(\lambda)$ is holomorphic on $\dR_-$, i.e., for any $x<0$ and for some $\xi\in\rho(\cM(x))$ (and hence for all $\xi\in\rho(\cM(x)))$ the resolvent $\left(\cM(\lambda)-\xi I\right)^{-1}$ exists and is holomorphic in $\lambda$ from a neighborhood of $x$, depending on $\xi$.
\end{enumerate}
\end{definition}

In what follows the classes of all Stieltjes and inverse Stieltjes families
in a Hilbert space $\sM$ are denoted by $\wt \cS(\sM)$ and $\wt\cS^{(-1)}(\sM)$, respectively.

The following statements are equivalent, cf. \cite{KacK,ArlHassi_2019[2]},
\begin{enumerate}
\def\labelenumi{\rm (\roman{enumi})}
\item $\cM(\lambda)$ is Stieltjes/inverse Stieltjes family,
\item $-\cM(1/\lambda)$ is inverse Stieltjes/respect. Stieltjes family,
\item $-\cM(\lambda)^{-1}$ is inverse Stieltjes/respect.  Stieltjes family,
\item $\lambda\cM(\lambda)$ $/\cfrac{1}{\lambda}\cM(\lambda)$ is inverse Stieltjes/respect. Stieltjes family.
\end{enumerate}

In our paper \cite{ArlHassi_2019[2]} Stieltjes and inverse Stieltjes families were connected to operator-valued holomorphic functions defined on the domain $\dC\setminus \{(-\infty,-1]\cup [1,\infty)\}$. More precisely it was shown that there is a one-to-one correspondences
given by a linear fractional transformation of functions and their variables between the classes of Stieltjes/inverse Stieltjes families in the Hilbert space $\sM$ and operator-valued functions from the combined Nevanlinna-Schur class $\cRS(\sM)$ (being recently studied in \cite{ArlHassi_2018}); see Lemma \ref{TH1} below.

Let us briefly describe the main results in the present paper.

(1) We prove that an arbitrary Stieltjes/inverse Stieltjes family  $\cM(\lambda)$ is a holomorphic family of the type (B) \cite{Ka} (Theorem \ref{typB}), i.e., $\cM(\lambda)$ is maximal sectorial l.r. with vertex at the origin and an acute semi-angle in each point $\lambda\in\dC\setminus\dR_+$, the domain $\cD[\cM(\lambda)]=\cD$ of the associated closed form does not depend on $\lambda\in\dC\setminus\dR_+$, and the quadratic form $\cM(\lambda)[u]$ is holomorphic for $\lambda\in\dC\setminus\dR_+$ for every $u\in \cD$.

(2) Let $\sM$ and $\sK$ be a Hilbert spaces and let $\cZ$ be a closed not necessarily densely defined linear operator in $\sM$.
Moreover, let $\wh A$ be a nonnegative selfadjoint l.r. in $\sK$ and let $V\in\bB(\sM,\sK)$ be a contraction.
Then the holomorphic closed sesquilinear sectorial form
\[\begin{array}{l}
q(\lambda)[u,v]=\left((I_\sM+(1+\lambda)V^*(\wh A-\lambda I)^{-1}V)\cZ u,\cZ v\right)_\sM,\; \\u,v\in\dom\cZ,
\; \lambda\in\dC\setminus\dR_+
\end{array}
\]
defines by the first representation theorem \cite{Ka} the Stieltjes family in $\sM$ of the form
\begin{equation}\label{Q1}
\cQ(\lambda)=\cZ^*\left(I_\sM+(1+\lambda)V^*(\wh A-\lambda I)^{-1}V\right)\cZ,
\end{equation}
while the holomorphic closed sesquilinear sectorial form
\[
\begin{array}{l}
r(\lambda)[u,v]:=-q(\lambda^{-1})=
\left(\left(-I_\sM-\left(1+\frac{1}{\lambda}\right)V^*\left(\wh A-\frac{1}{\lambda} I\right)^{-1}V\right)\cZ u,\cZ v\right)_\sM,\\
u,v\in \dom\cZ,\; \lambda\in\dC\setminus\dR_+
\end{array}
\]
defines by the first representation theorem the inverse Stieltjes family in $\sM$ of the form
\begin{equation}\label{R1}
 \cR(\lambda)=\cZ^*\left(-I_\sM-\left(1+\frac{1}{\lambda}\right)V^*\left(\wh A-\frac{1}{\lambda} I\right)^{-1}V\right)\cZ.
\end{equation}
Here $\cZ^*$ is the adjoint l.r., which is an operator if and only if $\cZ$ is densely defined.

The converse statements are also true: if $\cQ$ is a Stieltjes ($\cR$ is an inverse
Stieltjes  family) in $\sM$, then there exist an auxiliary Hilbert space
$\sK$, a closed not necessarily densely defined linear operator
$\cZ$ in $\sM$, a nonnegative selfadjoint l.r. $\wh A$ in $\sK$, and
a contraction $V\in\bB(\sM,\sK)$ such that the associated closed
form $\cQ(\lambda)[\cdot,\cdot]$ ($\cR(\lambda)[\cdot,\cdot]$) takes the form
$q(\lambda)[\cdot,\cdot]$ ($r(\lambda)[\cdot,\cdot]$), see Proposition
\ref{stutcor1}, Theorem \ref{typB}.

(3) From the above representations we derive in Theorem \ref{intrep1}
the integral representations of Stieltjes and inverse Stieltjes $\bB(\sM)$-valued functions
similar to the scalar case \cite{KacK}.

(4) The existence of strong resolvent limits at $-0$ and $-\infty$ is proved for any Stieltjes/inverse Stieltjes family
and their properties are established (Proposition \ref{jbfzo}).

(5) Examples of the Stieltjes/inverse Stieltjes families $Q(\lambda)$ possessing the property
\[
\dom Q(\lambda)\cap\dom Q(\mu)=\{0\},\\
\forall \lambda,\mu\in\dC\setminus\dR_+,\;\lambda\ne\mu
\]
are constructed. 

We systematically use the properties of l.r.'s established
in \cite{Ar, codd1974, CS, DdeS, RoBe}, relationships given by the
linear fractional transformations between selfadjoint contractions
and nonnegative l.r.'s and their resolvents, transfer
functions of selfadjoint passive system and Stieltjes/inverse
Stieltjes families.

Let us mention that in \cite{ArlBelTsek2011, ArlHassi_2015, AHS2, ArlKlotz2010, KalWo, KL1, KL2, KL, LT}
the results related to the representations of certain classes of  operator-valued Nevanlinna functions/Nevanlinna families
as compressed resolvents of selfadjoint exit space extensions were obtained.
Realizations of some subclasses of Stieltjes and inverse Stieltjes matrix-valued functions as the impedance functions of singular $L$-systems have been considered in \cite{ArlBelTsek2011,BeTse2, Tsek1994}.

 On the other hand, Stieltjes and inverse Stieltjes functions whose values are unbounded operators often appear in the analysis of partial differential operators, for instance, when considering Laplace operators on bounded (or unbounded) domains $\Omega\subset \dR^n$ with a smooth or non-smooth boundary.
A well-known object in their study is the ($\lambda$-dependent) Dirichlet-to-Neumann map $D(\lambda)$. After a sign change one gets the map
$-D(\lambda)$, which in the case of a smooth boundary is actually an inverse Stieltjes function on the boundary space $L^2(\partial\Omega)$,
whose values are unbounded operators with a constant domain not depending on $\lambda$. The inverse function $D(\lambda)^{-1}$ is a Stieltjes function,
whose values are compact operators on $L^2(\partial\Omega)$; cf. \cite{BeLa07} and \cite[Proposition~7.6]{DHM17}. When considering
in this setting the so-called Kre\u{\i}n Laplacian (associated with the Kre\u{\i}n-von Neumann extension of the underlying minimal operator),
which involves so-called regularized trace mappings, one obtains a Stieltjes function, whose values are unbounded operators.
Moreover, it is shown in \cite[Proposition~7.15]{DHM17}, \cite[Theorem~3.12]{DHM20b} that when the boundary gets very irregular
the inverse function $D(\lambda)^{-1}$ can become even multivalued in which case $D(\lambda)^{-1}$ is a domain invariant Stieltjes family.

{\bf Notations.}
Throughout this paper separable Hilbert spaces over the field $\dC$
of complex numbers are considered. The symbols $\dom T$, $\ran T$, $\ker T$, $\mul T$
stand for the domain, the range, the null-subspace, and the multivalued part of
a linear relation $T$. The closures of $\dom T$, $\ran T$ are denoted by $\cdom T$,
$\cran T$, respectively. The identity operator in a Hilbert space
$\sH$ is denoted by  $I$ and sometimes by $I_\sH$. If $\sL$ is a
subspace, i.e., a closed linear subset of $\sH$, the orthogonal
projection in $\sH$ onto $\sL$ is denoted by $P_\sL.$ The notation
$T\uphar \cN$ means the restriction of a linear operator $T$ to the
set $\cN\subset\dom T$. The resolvent set of a l.r. $T$ is denoted by
$\rho(T)$ and ${\rm s-R}-\lim$ means the strong resolvent limit of
linear relations \cite[Chapter 8, \S 1]{Ka}, \cite[Chapter 1]{BHS2020}.
The linear space of bounded operators acting between the Hilbert spaces
$\sH$ and $\sK$ is denoted by $\bB(\sH,\sK)$ and the Banach algebra $\bB(\sH,\sH)$ by
$\bB(\sH).$ For a contraction
$T\in\bB(\sH,\sK)$ the defect operator $(I-T^*T)^{\half}$ is denoted
by $D_T$ and $\sD_T:=\cran D_T$. For defect operators one has the
commutation relations from \cite{SF}: $TD_T = D_{T^*}T, \quad T^*D_{T^*}=D_{T}T^*.$

\section{Preliminaries}
\subsection{Sectorial linear relations}
Let $A=\left\{\{f,f'\}\right\}$ be a linear relation (l.r.)
in the Hilbert space $\sH$ with the inner product $(\cdot,\cdot).$ Then
\[
W(A)=\{(f',f):\; f\in\dom A,\;||f||=1\}
\]
is called  numerical range of $A$.
A l.r. $A$ is called ({\bf a}) accretive if $W(A)$ is contained in the closed right half-plane $\RE \lambda\ge 0$,
({\bf b}) $m$-accretive if it is accretive and has no accretive extensions in $\sH$; see
\cite{Ka,RoBe} (equivalently: the resolvent set $\rho(A)$ contains the left half-plane/ the adjoint l.r. $A^*$ is accretive).
Moreover, $A$ is said to be \textit{sectorial with the vertex at the origin and the semi-angle $\alpha\in[0,\pi/2)$}
if
\[
W(A)\subseteq\left\{z\in\dC:\;|\arg z|\le\alpha\right\}=:S(\alpha).
\]
Clearly, a sectorial l.r. is accretive; it is called \textit{maximal sectorial}, or \textit{$m$-$\alpha$-sectorial} for short, if
it is $m$-accretive.

Each $m-accretive$ (respectively, $m-\alpha$-sectorial l.r.) $A$ can be represented as follows \cite{RoBe}
$
A={\rm Gr} (A_s) \oplus \{0, \mul A\},
$ 
where $A_s$ is the operator part of $A$ acting on the subspace $\sH\ominus \mul A$ and ${\rm Gr} (A_s)=\{\{f, A_s f\},\; f\in\dom A_s=\dom A\}.$ Observe, in particular, that the case $\alpha=0$ corresponds to the class of nonnegative l.r. $A\ge 0$.

Let $-\pi<\alpha<\beta\le\pi$, $\beta-\alpha<\pi$. If $A$ is a l.r.,
\[
W(A)\subseteq\{z\in\dC:\;\alpha\le\arg z\le \beta\},
\]
and $A$ has regular points outside the above sector, then $A$  also will said to be $m$-sectorial.
Note that
\[
W(\exp{-i(\beta+\alpha)} A)\subseteq\left\{\dC:\;|\arg z|\le(\beta-\alpha)/2\right\}=S((\beta-\alpha)/2),
\]
i.e., the l.r. $A_{\alpha,\beta}:=\exp{-i(\beta+\alpha)}A$ is $m-(\beta-\alpha)/2$-sectorial.
A similar definition can be given for sectorial sesquilinear forms.  A sectorial form ${\mathfrak a}$ is called closed, see \cite{Ka} if
\begin{multline*}
\lim\limits_{n\to\infty}f_n=f\;\mbox{and}\lim\limits_{m,n\to\infty}{\mathfrak a}[f_n-f_m]=0\; \mbox{for}\;\{f_n\}\subset\dom{\mathfrak a}\\
\Longrightarrow f\in\dom{\mathfrak a},\;
\lim\limits_{n\to\infty}{\mathfrak a}[f-f_n]=0.
\end{multline*}
There is a one-to-one correspondence between closed sectorial forms $\tau$ with vertex at the origin and  $m$-sectorial l.r.'s ${A}$ (the first representation theorem \cite{Ka}):
\begin{multline*}
\dom A=\left\{u\in\dom\tau: \exists \f\quad\mbox{such that}\quad {\mathfrak a}[u,v]=(\f,v)\;\forall v\in\dom{\mathfrak a}\right\}, \\
A=\{u,\f+(\sH\ominus\cdom{\mathfrak a})\}.
\end{multline*}
In what follows the closed sesquilinear form associated with an $m$-sectorial l.r. $A$ is
denoted by $A[\cdot,\cdot]$ and its domain by $\cD[A]$. The form $A[\cdot,\cdot]$ is the closure of the form \cite{RoBe}
\[
{\mathfrak a}[f,g]:=(f',g)=(A_s f,g),\; \{f, f'\},\;\{g, g'\}\in A.
\]

\subsection{Linear fractional transformations of sectorial linear relations}
\begin{definition} \cite{Arl1987}.
Let $\alpha\in (0,\pi/2)$ and let $T$ be a linear operator in the Hilbert space $H$
defined on $H$. If
\begin{equation}
\label{CA1} ||T\sin\alpha\pm i\cos\alpha I_H||\le 1,
\end{equation}
then we say that $T$ belongs to the class
$C_H(\alpha)$.
\end{definition}
The condition \eqref{CA1} is equivalent to
\begin{equation}
\label{CA2} 2|\IM (Tf,f)|\le\tan\alpha(||f||^2-||Tf||^2),\;
f\in\dom T.
\end{equation}
Therefore, if $T\in C_H(\alpha)$, then $T$ is a contraction. Due to
\eqref{CA2} it is natural to consider selfadjoint
contractions in $H$ as operators of the class
$C_H(0)$. In view of \eqref{CA2} one can write that $C_H(0)=\underset{\alpha\in(0,\pi/2)}{\bigcap}C_H(\alpha).$

Analogously, set
$C(\alpha):=\left\{z\in\dC:|z\sin\alpha\pm i\cos\alpha|<1\right\}$.
If $\alpha=0$, then $C(0)=[-1,1].$

For a l.r. $A$ consider the following linear fractional transformation (the Cayley transform):
\begin{equation}\label{CALTR}
 \cC(A):=-I+2 (A+I)^{-1}=\left\{\{f+f',f-f'\}:
\{f,f'\}\in A\right\}.
\end{equation}
Clearly,  $\cC(\cC(A))=A$ and, moreover, $\dom \cC(A)=\ran(I+A)$ and $\ran \cC(A)=\ran(I-A)$.
The transformation \eqref{CALTR} establishes a one-to-one correspondence between
$m-\alpha$-sectorial l.r.'s in $H$ and the class
$C_H(\alpha)$.
In addition, $T\in C_H(\alpha)$ if and only if the bounded operator
$(I-T^*)(I+T)$ is a
sectorial operator with the vertex at the origin and the semiangle
$\alpha$; see \cite{Arl1991}.

Properties of operators of the class $\wt C_H:=\underset{\alpha\in[0,\pi/2)}{\bigcup}C_H(\alpha)$ were studied in
\cite{Arl1987, Arl1991}.

Let $A$ be an $m-\alpha$-sectorial l.r. in $\sH$. Then
the closed sectorial form $A[u,v]$ associated with $A$ can be described as follows.
Let $T=\cC(A)$. Then $T\in\wt C_\sH$ and hence the operators $I_\sH\pm T$ are
m-sectorial (bounded) operators. It follows that
\[
 I_\sH+T=(I_\sH+T_R)^{\half}(I+iG)(I_\sH+T_R)^{\half},
\]
where $T_R=(T+T^*)/2$ is the real part of $T$, $G$ is a bounded
selfadjoint operator in the subspace $\cran(I_\sH+T_R)^{\half}$, and $I$
is the identity operator in $\cran(I_\sH+T_R)^{\half}$.
\begin{proposition}\cite[Proposition 9.5.1]{ArlBelTsek2011}, \cite[Proposition 2.2]{ArlHassi_2015}.\label{clfrm}
The closed sectorial form associated with an m-sectorial l.r. $A$
is given by
\[
\begin{array}{l}
\cD[A]=\ran (I+ T_R)^{\half},
\\ A[u,v]=-(u,v)+2\left((I+iG)^{-1}(I_\sH+T_R)^{[-\half]}u,(I_\sH+T_R)^{[-\half]}v\right),
\end{array}
\]
where $(I_\sH+T_R)^{[-\half]}$ is the Moore--Penrose pseudo-inverse, and $u,v\in\cD[A]$.
\end{proposition}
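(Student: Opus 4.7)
The plan is to compute the sesquilinear form $(f',f)$ directly on $\dom A$ using the Cayley parametrization together with the stated factorization of $I+T$, then identify the natural closed extension and invoke the uniqueness part of the first representation theorem.

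First, I would use the description of the Cayley transform in \eqref{CALTR}: $\{f,f'\}\in A$ if and only if there exists $g\in\sH$ (namely $g=f+f'$) with $f=\half(I+T)g$ and $f'=\half(I-T)g$. Substituting the factorization $I+T=(I+T_R)^{\half}(I+iG)(I+T_R)^{\half}$ and setting
\[
\xi:=\half(I+iG)(I+T_R)^{\half}g,
\]
one has $\xi\in\cran(I+T_R)^{\half}$, because $G$ acts on that subspace and $(I+T_R)^{\half}g$ already lies there. Moreover, $f=(I+T_R)^{\half}\xi$, which shows $\dom A\subseteq\ran(I+T_R)^{\half}$ and $(I+T_R)^{[-\half]}f=\xi$.

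Second, I would carry out the algebraic identification on $\dom A$. Taking adjoints in the factorization gives $I+T^*=(I+T_R)^{\half}(I-iG)(I+T_R)^{\half}$, and one readily computes
\[
2\bigl((I+iG)^{-1}\xi,\xi\bigr)=\half\bigl((I+T^*)g,g\bigr).
\]
Expanding $(f',f)=\tfrac{1}{4}\bigl((I-T)g,(I+T)g\bigr)$ and simplifying yields $(f',f)=-(f,f)+\half((I+T^*)g,g)$, so that combining these two identities produces, on $\dom A$,
\[
(f',f)=-(f,f)+2\bigl((I+iG)^{-1}\xi,\xi\bigr),
\]
which is exactly the proposed formula restricted to $\dom A$.

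Third, I would verify that the sesquilinear form $\tau$ defined by the right-hand side on $\cD:=\ran(I+T_R)^{\half}$ is closed and sectorial and that $\dom A$ is a form core. The map $\xi\mapsto(I+T_R)^{\half}\xi$ is a bijection from $\sH_0:=\cran(I+T_R)^{\half}$ onto $\cD$, and under this parametrization the real part of $\tau$ becomes $2((I+G^2)^{-1}\xi,\xi)$, which is equivalent to $\|\xi\|^2$ because $G$ is bounded and self-adjoint. This shows that $\cD$ endowed with the form norm of $\tau$ is a Hilbert space, so $\tau$ is closed, and sectoriality of $\tau$ is inherited from the $m$-sectorial bounded operator $I+iG$ on $\sH_0$ whose semi-angle is controlled by $\|G\|$. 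Since $\tau$ agrees with $(f',f)$ on $\dom A$, the uniqueness part of the first representation theorem identifies $\tau$ with the closed form associated with $A$, yielding $\cD[A]=\cD$ and the asserted formula for $A[u,v]$.

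The main obstacle is the bookkeeping around the Moore--Penrose pseudo-inverse when $I+T_R$ has nontrivial kernel: one has to confirm that the object $(I+iG)^{-1}(I+T_R)^{[-\half]}u$ is unambiguous, i.e.\ independent of any choice of preimage of $u$ under $(I+T_R)^{\half}$, and one has to show that $\dom A$ is dense in $\cD$ in the form norm. The latter reduces to approximating an arbitrary $\xi\in\sH_0$ by elements of the form $\half(I+iG)(I+T_R)^{\half}g_n$ with $g_n\in\sH$, which in turn follows from density of $\ran(I+T_R)^{\half}$ in $\sH_0$ combined with the fact that $I+iG$ is a bounded bijection of $\sH_0$ onto itself.
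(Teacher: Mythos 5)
Your argument is correct and is essentially the standard derivation: the paper states Proposition \ref{clfrm} only as a citation (to \cite{ArlBelTsek2011} and \cite{ArlHassi_2015}) without reproducing a proof, and your route --- computing $(f',f)$ on $\dom A$ via the Cayley parametrization $f=\half(I+T)g$, $f'=\half(I-T)g$ together with the factorization of $I+T$, and then showing that the right-hand side defines a closed sectorial form for which $\dom A$ is a form core --- is the one used in those sources. One small imprecision: under the parametrization $u=(I+T_R)^{\half}\xi$ the quantity $2\left((I+G^2)^{-1}\xi,\xi\right)$ is not $\RE\tau[u]$ but $\RE\tau[u]+\|u\|^2$, i.e.\ exactly the square of the form norm; since that is what the completeness argument actually requires, your proof of closedness (and the core/density step that finishes the identification with $A[\cdot,\cdot]$) stands as written.
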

Notice that if $T=\cC(A)=-I+2(I+A)^{-1}$, then $(A+I)^{-1}=\frac{1}{2}\left(T+I\right)$ and
the resolvents are connected by the following identities
\begin{equation}
\label{connectresab}\begin{array}{l}
 (A-\lambda I)^{-1}=-\frac{1}{1+\lambda}\left(I+\frac{2}{1+\lambda}\left(T-\frac{1-\lambda}{1+\lambda}\, I\right)^{-1}\right),\quad \lambda\in\rho(A)\setminus\{-1\},\\[3mm]
\Longleftrightarrow\;
 (A-\lambda I)^{-1}=\cfrac{1}{1-\lambda}(T+I)\left(I-\cfrac{1+\lambda}{1-\lambda}T\right)^{-1},\quad \lambda\in\rho(A)\setminus\{1\}.
\end{array}
\end{equation}

\subsection{Operator-valued functions of the class $\cRS(\sM)$}

\begin{definition}\label{rsm} \cite{ArlHassi_2018}.
Let $\sM$ be a separable Hilbert space. A $\bB(\sM)$-valued Nevanlinna function $\Omega$ holomorphic on $\dC\setminus\{(-\infty,-1]\cup[1,+\infty)\}$ is said to belong to the class $\cRS(\sM)$ (combined Nevanlinna-Schur class) if $-I\le
\Omega(x)\le I$ for $x\in (-1,1)$.
\end{definition}
If $\Omega\in\cRS(\sM)$ then according to \cite[Theorem 5.1, Proposition 5.6]{AHS_CAOT2007} (see also \cite{ArlHassi_2018})
there exists (up to unitary equivalence) a unique minimal passive selfadjoint system
$$\tau=\left\{ \begin{bmatrix} D&C\cr C^*&F\end{bmatrix},\sM,\sM,\sK\right\}$$
whose transfer function coincides with $\Omega(z)$, i.e., the following relation holds:
\[
\Omega(z)=\Omega_\tau(z)=D+z C(I-zF)^{-1}C^*,\;z\in\dC\setminus\{(-\infty,-1]\cup[1,+\infty)\}.
\]
Moreover, if
$
T=\begin{bmatrix} D&C\cr C^*&F\end{bmatrix}:
\begin{array}{l} \sM \\\oplus\\ \sK \end{array} \to
\begin{array}{l} \sM \\\oplus\\ \sK \end{array}
$ 
is the selfadjoint contraction corresponding to the system $\tau$, then
due to the Schur-Frobenius formula for the resolvent of a block operator matrix one has
\[
P_\sM(I-zT)^{-1}\uphar\sM=(I_\sM-z\Omega_\tau(z))^{-1},\;\;z\in\dC\setminus\{(-\infty,-1]\cup[1,+\infty)\}.
\]

Note that because $\tau$ is a passive discrete-time system, its transfer function $\Omega_\tau$ belongs to the Schur class \cite{A}, i.e. $||\Omega_\tau(z)||\le 1$ for all $z$ in the unit disk $\dD$.

Recall that a selfadjoint passive system $\tau=\left\{ \begin{bmatrix} D&C\cr C^*&F\end{bmatrix},\sM,\sM,\sK\right\}$ is called minimal if
\[
\cspan\left\{F^n C^*\sM,\;n\in\dN\cup\{0\}\right\}=\sM\oplus\sK.
\]
The latter is equivalent to the fact that the operator $T$ acting in $\sH=\sM\oplus\sK$ is $\sM$-minimal.
It follows from the definition that if $\Omega\in \cRS(\sM)$,
then strong limit values $\Omega(\pm 1)$ exist, see \cite{Arl1991}.
Observe, that if $\Omega\in\cRS(\sM)$, then because it is a Schur class operator-valued function
and also a Nevanlinna function on the domain $\dC\setminus\{(-\infty,-1]\cup[1,+\infty)\}$
it admits the following block operator representation for all $z\in\dC\setminus\{(-\infty,-1]\cup[1,+\infty)\}$,
\begin{equation}\label{struct11}\begin{array}{l}
\Omega(z)=\begin{bmatrix}I&0&0\cr 0&-I&0\cr 0&0&\omega_0(z)  \end{bmatrix}:\begin{array}{l} \ker(I-\Omega(0))\\\oplus\\\ker(I+\Omega(0))\\\oplus\\\sD_{\Omega(0)}\end{array}\to \begin{array}{l} \ker(I-\Omega(0))\\\oplus\\\ker(I+\Omega(0))\\\oplus\\\sD_{\Omega(0)}\end{array},
\end{array}
\end{equation}
where $\omega_0$ belongs to the class $\cRS(\sD_{\Omega(0)})$ and  $\ker D_{\omega_0(0)}=\{0\}.$

The class $\cRS(\sM)$ can be characterized also as follows.
\begin{theorem}
\label{newchar} \cite[Theorem~4.1]{ArlHassi_2018}. Let $\Omega$ be an operator-valued
Nevanlinna function defined on the domain $\dC\setminus\{(-\infty,-1]\cup[1,+\infty)\}$.
Then the following statements are equivalent:
\begin{enumerate}
\def\labelenumi{\rm (\roman{enumi})}
\item $\Omega$
 belongs to the class $\cRS(\sM)$;
\item $\Omega$ satisfies the inequality
\begin{equation}
\label{ythfd}
 I-\Omega^*(z)\Omega(z)-(1-|z|^2)\cfrac{\IM \Omega (z)}{\IM z}\ge
 0,\quad \IM z\ne 0;
\end{equation}

\item the kernel
\begin{equation}\label{ythfd22}
K(z,w):=I-\Omega^*(w)\Omega(z)-\cfrac{1-\bar w z}{z-\bar w}\,(\Omega(z)-\Omega^*(w)
\end{equation}
is nonnegative on the domains
$$\dC\setminus\{(-\infty,-1]\cup [1,\infty)\},\; \IM z>0\quad\mbox{and}\quad  \dC\setminus\{(-\infty,-1]\cup [1,\infty)\},\; \IM z<0.$$
\end{enumerate}
\end{theorem}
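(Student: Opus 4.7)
\emph{Plan.} I will prove the cyclic chain (iii)$\Rightarrow$(ii)$\Rightarrow$(i)$\Rightarrow$(iii). The first implication is immediate: putting $w=z$ in the kernel \eqref{ythfd22} and using $z-\bar z=2i\IM z$ together with $\Omega(z)-\Omega^*(z)=2i\IM\Omega(z)$ identifies $K(z,z)$ with the left-hand side of \eqref{ythfd}, so nonnegativity of $K$ forces \eqref{ythfd}. For (ii)$\Rightarrow$(i), the Nevanlinna property gives $(1-|z|^2)\IM\Omega(z)/\IM z\ge 0$ inside the unit disk off the real axis, so \eqref{ythfd} yields $\|\Omega(z)\|\le 1$ there; by holomorphy across $(-1,1)$ together with the selfadjointness of $\Omega(x)$ at real points of analyticity, one obtains $-I\le\Omega(x)\le I$, which is (i).

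The substantive implication is (i)$\Rightarrow$(iii). Invoking the minimal passive selfadjoint realization recalled after Definition~\ref{rsm}, write $\Omega(z)=D+zC(I-zF)^{-1}C^*$ where $T=\begin{bmatrix}D&C\\ C^*&F\end{bmatrix}$ is a selfadjoint contraction on $\sM\oplus\sK$. Set $R(\zeta):=(I-\zeta F)^{-1}$ and introduce the auxiliary operator-valued function
\[
\Psi(z):=\begin{bmatrix}I_\sM\\ zR(z)C^*\end{bmatrix}\in\bB(\sM,\sM\oplus\sK),
\]
which appears naturally as the first block column of the resolvent of $T$ restricted to $\sM$. The key claim is the Gram-matrix identity
\[
K(z,w)=\Psi(w)^*(I-T^2)\Psi(z).
\]
Since $T$ is a selfadjoint contraction, $I-T^2\ge 0$ and $D_T:=(I-T^2)^{1/2}$ is well-defined; setting $\Phi(z):=D_T\Psi(z)$ then gives $K(z,w)=\Phi(w)^*\Phi(z)$, which is manifestly nonnegative as a kernel on each of the two connected components indicated in (iii).

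To verify the Gram identity, two elementary computations are needed. The resolvent identity $zR(z)-\bar wR(\bar w)=(z-\bar w)R(z)R(\bar w)$ gives the Nevanlinna-kernel formula
\[
\frac{\Omega(z)-\Omega^*(w)}{z-\bar w}=CR(z)R(\bar w)C^*,
\]
so that $K(z,w)=I-\Omega^*(w)\Omega(z)-(1-\bar wz)CR(z)R(\bar w)C^*$. On the other side, $(I-zF)R(z)=I$ yields the block identity $T\Psi(z)=\begin{bmatrix}\Omega(z)\\ R(z)C^*\end{bmatrix}$, from which $T^2\Psi(z)$ follows. A direct expansion of $\Psi(w)^*\Psi(z)-\Psi(w)^*T^2\Psi(z)$, simplified using $\bar wFR(\bar w)=R(\bar w)-I$, reproduces exactly the above expression for $K(z,w)$.

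\emph{Main obstacle.} The only nonobvious step is the choice of the test function $\Psi(z)$. Once it is recognized, verification of the Gram identity is a careful but routine exercise in the block algebra of $T$; the specific prefactor $1-\bar wz$ appearing in \eqref{ythfd22}---and distinguishing the combined Nevanlinna--Schur kernel from either of its constituent Nevanlinna or Schur kernels---emerges automatically from the cross term $z\bar wCR(\bar w)R(z)C^*$ in $\Psi(w)^*\Psi(z)$.
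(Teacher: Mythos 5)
Your proposal is correct. Note first that this paper does not actually prove Theorem \ref{newchar}: it is imported verbatim from \cite[Theorem~4.1]{ArlHassi_2018}, so there is no in-text proof to compare against; your argument is, however, built precisely on the realization machinery the paper recalls after Definition \ref{rsm} (the minimal passive selfadjoint system with transfer function $\Omega$), which is also the natural route taken in the cited source. I checked the three implications: (iii)$\Rightarrow$(ii) by setting $w=z$ is immediate; (ii)$\Rightarrow$(i) correctly uses the Nevanlinna property to make the subtracted term nonnegative on $\dD\setminus\dR$, hence $\|\Omega(z)\|\le 1$ there, and selfadjointness of $\Omega(x)$ on $(-1,1)$ then gives $-I\le\Omega(x)\le I$; and the Gram identity in (i)$\Rightarrow$(iii) is verified by the computation $T\Psi(z)=\bigl[\begin{smallmatrix}\Omega(z)\\ R(z)C^*\end{smallmatrix}\bigr]$ (using $I+zFR(z)=R(z)$), which yields
\[
\Psi(w)^*(I-T^2)\Psi(z)=I-\Omega^*(w)\Omega(z)-(1-\bar w z)\,CR(\bar w)R(z)C^*,
\]
while the resolvent identity gives $\bigl(\Omega(z)-\Omega^*(w)\bigr)/(z-\bar w)=CR(z)R(\bar w)C^*$; since $R(z)$ and $R(\bar w)$ commute, this is exactly $K(z,w)$, and $I-T^2\ge 0$ makes the kernel a Gram kernel. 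The restriction to the two half-domains in (iii) is correctly respected, since it only serves to keep $z\neq\bar w$. The one cosmetic inaccuracy is the motivational remark that $\Psi(z)$ is the first block column of the resolvent of $T$ restricted to $\sM$: that column is actually $\Psi(z)(I-z\Omega(z))^{-1}$, but this plays no role in the argument.
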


Observe that the operator $\Omega(z)$ belongs to the class $\wt C_\sM$ when $z\in\dD$. More precisely from \eqref{ythfd} we get
\[
 2\left|\IM\left(\Omega(z)h,h\right)\right|\le 2\cfrac{|\IM z|}{1-|z|^2}\left(||h||^2-\left\|\Omega(z)h\right\|^2\right)\;\forall h\in\sM,\;\forall z\in\dD.
\]
This means that
\begin{equation}\label{omca}
\Omega(z)\in C_\sM(\alpha_z)\;\mbox{where}\quad
\alpha_z=\arctan\left(\cfrac{2|\IM z|}{1-|z|^2}\right),\; z\in\dD.
\end{equation}

Let $\sH,\,\sK,$ $\sM$, and $\sN$ be Hilbert spaces.
The following well-known result gives a parametrization of all contractive block operators
acting from $\sH\oplus \sM$ into $\sK\oplus\sN.$

\begin{proposition} \cite{AG, DaKaWe, ShYa}. \label{ParContr}
The operator matrix
$
T=\begin{bmatrix} D&C \cr B&F\end{bmatrix} :
\begin{array}{l} \sM \\\oplus\\ \sK \end{array} \to
\begin{array}{l} \sN \\\oplus\\ \cL \end{array}
$
is a contraction if and only if $D\in\bB(\sM,\sN)$ is a contraction
and the entries $B$, $C$, $F$ take the form
\[
 B=ND_D,\quad C=D_{D^*}G,\quad F=-ND^*G+D_{N^*}LD_{G},
\]
where the operators $N\in\bB(\sD_D,\sK)$, $G\in\bB(\cL,\sD_{D^*})$
and $L\in\bB(\sD_{G},\sD_{N^*})$ are contractions. Moreover, the
operators $N,\,G,$ and $L$ are uniquely determined by $T$.
\end{proposition}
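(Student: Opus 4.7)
The plan is to extract the three parameters $N$, $G$, $L$ successively: $N$ from the first-column contractivity, $G$ from the first-row contractivity, and $L$ from what is left of $F$ after factoring out a Halmos-type unitary dilation of $D$. The converse direction then follows from a direct nonnegativity check for $I-T^*T$.

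The first step applies the Douglas factorization lemma twice. Testing $T^*T \le I$ on $\sM\oplus\{0\}$ gives $D^*D+B^*B \le I_\sM$, equivalently $B^*B \le D_D^2$, which produces a unique contraction $N$, defined on $\sD_D=\cran D_D$, with $B = ND_D$. The analogous computation applied to $TT^* \le I$ on the first coordinate yields $CC^* \le D_{D^*}^2$ and a unique contraction $G\in\bB(\cL,\sD_{D^*})$ with $C = D_{D^*}G$. Both $N$ and $G$ are uniquely determined on the respective closed ranges of the defect operators.

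The second step exploits the unitary block operator $U_D := \begin{bmatrix} D & D_{D^*} \\ D_D & -D^* \end{bmatrix}\colon \sM\oplus\sD_{D^*}\to \sN\oplus\sD_D$, whose unitarity is a short check from $DD^*+D_{D^*}^2=I$, $D^*D+D_D^2=I$, and the intertwinings $DD_D = D_{D^*}D$, $D^*D_{D^*}=D_D D^*$ quoted in the preliminaries. Sandwiching $U_D$ between $\diag(I_\sN,N)$ on the left and $\diag(I_\sM,G)$ on the right gives a contraction whose entries agree with those of $T$ everywhere except in the $(2,2)$-slot, where one reads $-ND^*G$ in place of $F$. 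Hence $T$ is contractive if and only if the residual $F+ND^*G$ admits a representation $D_{N^*}LD_G$ with $L$ a contraction $\sD_G\to\sD_{N^*}$; the formula $F=-ND^*G + D_{N^*}LD_G$ then appears as the unique Schur-type completion.

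The main obstacle is justifying this last equivalence. The sufficiency direction is a direct expansion: substituting the claimed formulas for $B$, $C$, $F$ into $T$ and computing $I-T^*T$, one uses the commutations $D^*D_{D^*}=D_D D^*$ and the defect identities $D_D^2=I-D^*D$, $D_G^2=I-G^*G$, $D_{N^*}^2=I-NN^*$ to collapse the expression into a manifestly nonnegative block Gram form in $D_D$, $D_G$, $D_L$, giving $\|T\|\le 1$. For necessity, one computes the Schur complement of $I-T^*T$ relative to the first row/column: the reduction forces $F+ND^*G$ to take values in $\cran D_{N^*}$ on the left and to annihilate $\ker D_G$ on the right, after which two further invocations of Douglas' lemma deliver the desired contraction $L$, uniquely determined on $\sD_G$ into $\sD_{N^*}$. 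Uniqueness of the entire parametrization is thus built into the construction, each of $N$, $G$, $L$ being pinned down on the closed range of its pertinent defect operator.
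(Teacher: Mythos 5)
The paper offers no proof of Proposition \ref{ParContr}: it is imported from \cite{AG, DaKaWe, ShYa}, so there is no internal argument to compare yours against. What you wrote is, in substance, the standard proof from that literature, and the structure is sound: Douglas factorization on the first column ($B^*B\le D_D^2$, hence $B=ND_D$) and on the first row ($CC^*\le D_{D^*}^2$, hence $C=D_{D^*}G$), then the Julia unitary $U_D$ sandwiched as $\diag(I,N)\,U_D\,\diag(I,G)$ to reproduce every entry of $T$ except the $(2,2)$ corner, isolating the residual $F+ND^*G$. The only place where the write-up is genuinely compressed is the necessity of $F+ND^*G=D_{N^*}LD_G$. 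The qualitative statement that the residual ``takes values in $\cran D_{N^*}$ and annihilates $\ker D_G$'' is strictly weaker than what is required; the actual content is the operator inequality coming from the row-contraction criterion,
\[
\col(C,F)\,\col(C,F)^*\;\le\; I-\col(D,ND_D)\,\col(D,ND_D)^* \;=\; VV^*+\Delta\Delta^*,
\qquad V=\col(D_{D^*},-ND^*),\;\; \Delta=\col(0,D_{N^*}),
\]
which by Douglas' lemma factors $\col(C,F)$ through the row $[\,V\;\;\Delta\,]$ with a contractive parameter $\col(\gamma_1,\gamma_2)$; uniqueness of the Douglas factor for $C$ forces $\gamma_1=G$, and the contractivity constraint $\gamma_2^*\gamma_2\le D_G^2$ then gives $\gamma_2=LD_G$ with $L$ a uniquely determined contraction. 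Since you explicitly invoke Douglas' lemma twice at exactly this point, I read your phrasing as an abbreviation rather than an error, but the displayed identity is the step that must be written out for the proof to close. A final remark: the domains and codomains of $N$ and $G$ as printed in the proposition contain a typo (for $T:\sM\oplus\sK\to\sN\oplus\cL$ one needs $N\in\bB(\sD_D,\cL)$ and $G\in\bB(\sK,\sD_{D^*})$), which your text inherits; in the paper's applications $\sN=\sM$ and $\cL=\sK$, so nothing downstream is affected.
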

\begin{remark}
\label{stut1} If $\sN=\sM$ and $\cL=\sK$, then $T\in\bB(\sM\oplus\sK)$
is a selfadjoint contraction if and only if $D=D^*$, $B=C^*$,
$G=N^*$, $L=L^*$.
\end{remark}

All the statements in the next lemma follow from Proposition \ref{ParContr} and Remark \ref{stut1}.

\begin{lemma}\label{LEM1}
(1) Let
$
T=\begin{bmatrix} D&C\cr
C^*&F\end{bmatrix}:\begin{array}{l}\sM\\\oplus\\
\sK\end{array}\to
\begin{array}{l}\sM\\\oplus\\
\sK\end{array}
$
be a selfadjoint contraction with entries parameterized as follows
\begin{equation}
\label{parame}
 C^*= ND_D,\; C= D_D N^*,\;
 F=-NDN^*+D_{N^*}XD_{N^*},
\end{equation}
where $N:\sD_D\to\sK$ is a contraction and $X$ is a selfadjoint
contraction in $\sD_{N^*}$. Define
\begin{equation}\label{ff}
\begin{array}{c}
 F':=F+N(I+D)N^*=NN^*+D_{N^*}XD_{N^*},\\
 F'':=F-N(I-D)N^*=-NN^*+D_{N^*}XD_{N^*}.
\end{array}
\end{equation}
Then $F', F''$ are selfadjoint contractions in $\sK$ such that
$
F'-F''=2NN^*.
$

(2) Let
\[
 B(z)=F+zC^*(I-zD)^{-1}C,\quad z\in\dC\setminus\{(-\infty,-1]\cup [1,+\infty)\}.
\]
Then the following identities hold:
\[
F'=B(1):=s-\lim\limits_{x\uparrow1}B(x), \quad
F''=B(-1):=\lim\limits_{x\downarrow -1}B(x).
\]
(3) Define
\begin{equation}
\label{zzz}
\begin{array}{l}
\Sigma_+(z):=z(I+D)^{\half}N^*(I-zF)^{-1}NP_{\sD_D}(I+D)^{\half},\\
\Sigma_-(z):=z(I-D)^{\half}N^*(I-zF)^{-1}NP_{\sD_D}(I-D)^{\half},\\
z\in\dC\setminus\{(-\infty,-1]\cup[1,+\infty)\}.
\end{array}
\end{equation}
Then
\[
\begin{array}{l}
\left(I_\sM-\Sigma_+(z)\right)^{-1}
=I_{\sM}+z(I+D)^{\half}N^*(I-zF')^{-1}NP_{\sD_D}(I+D)^{\half},\\
\left(I_{\sM}+\Sigma_-(z)\right)^{-1}
=I_{\sM}-z(I-D)^{\half}N^*(I-zF'')^{-1}NP_{\sD_D}(I-D)^{\half},\\
z\in\dC\setminus\{(-\infty,-1]\cup[1,+\infty)\}.
\end{array}
\]
(4) Moreover, if
$
 W(z)=I+zDN^*\left(I-z\cfrac{F'+F''}{2}\right)^{-1}N,
$ 
then
\[
W(z)^{-1}=I-zDN^*(I-zF)^{-1}N,\quad z\in\dC\setminus\{(-\infty,-1]\cup[1,+\infty)\}.
\]
\end{lemma}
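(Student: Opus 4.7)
The plan is to prove all four parts by direct algebraic manipulation based on the parametrization $F = -NDN^* + D_{N^*}XD_{N^*}$ from \eqref{parame}, together with the containment $\ran N^* \subseteq \sD_D = (\ker(I-D)\oplus\ker(I+D))^\perp$, which also makes $I\pm D$ leave $\sD_D$ invariant. For \emph{part (1)}, substituting the formula for $F$ into $F' = F + N(I+D)N^*$ and $F'' = F - N(I-D)N^*$ causes the $\pm NDN^*$ terms to cancel, leaving $F' = NN^* + D_{N^*}XD_{N^*}$ and $F'' = -NN^* + D_{N^*}XD_{N^*}$; hence $F' - F'' = 2NN^*$ and selfadjointness are immediate. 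Rewriting these using $I - NN^* = D_{N^*}^2$ as $F' = I - D_{N^*}(I-X)D_{N^*}$ and $F'' = -I + D_{N^*}(I+X)D_{N^*}$, the bounds $0 \le I\pm X \le 2I_{\sD_{N^*}}$ and $D_{N^*}^2 \le I$ give $-I \le F', F'' \le I$.

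For \emph{part (2)}, substituting $C = D_D N^*$ and using $D_D^2 = I - D^2$ together with the commutativity of $D$ and $D_D$ gives $B(z) = F + zN(I+D)(I-D)(I-zD)^{-1}N^*$. A spectral-theorem argument (with the uniform bound $|x(1-\lambda)/(1-x\lambda)| \le 1$ for $\lambda \in [-1,1]$, $x \in (0,1)$) shows that $x(I-D)(I-xD)^{-1}$ converges strongly to $P_{\cran(I-D)}$ as $x \uparrow 1$; multiplying by the bounded factor $(I+D)$ and applying to $N^*h \in \sD_D \subseteq \cran(I-D)$, the limit simplifies to $(I+D)N^*h$, so $B(1) = F + N(I+D)N^* = F'$. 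The argument for $B(-1) = F''$ is symmetric. The main obstacle sits here: the resolvent $(I-zD)^{-1}$ may blow up at the boundary on $\ker(I\mp D)$, and this degeneracy must be absorbed by the vanishing of $I-D^2$ on the same kernels, together with the containment $\ran N^* \subseteq \sD_D$.

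For \emph{part (3)}, setting $K_\pm := NP_{\sD_D}(I\pm D)^{1/2}:\sM\to\sK$ gives $\Sigma_\pm(z) = zK_\pm^*(I-zF)^{-1}K_\pm$ and, using the invariance of $\sD_D$ under $I\pm D$ and $\ran N^* \subseteq \sD_D$, $K_\pm K_\pm^* = N(I\pm D)N^*$; by part (1) this reads $F + K_+K_+^* = F'$ and $F - K_-K_-^* = F''$. Both claimed identities then reduce to the resolvent identity
\[
 (I - z(F \pm K_\pm K_\pm^*))^{-1} - (I-zF)^{-1} = \pm z (I-zF)^{-1} K_\pm K_\pm^* (I - z(F \pm K_\pm K_\pm^*))^{-1},
\]
which is immediate from the decomposition $(I-zF) = (I - z(F \pm K_\pm K_\pm^*)) \pm zK_\pm K_\pm^*$; all inverses exist on $\dC \setminus \{(-\infty,-1]\cup[1,+\infty)\}$ since $F, F', F''$ are selfadjoint contractions.

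For \emph{part (4)}, the computation $(F'+F'')/2 = D_{N^*}XD_{N^*} =: G$ yields $W(z) = I + zDN^*(I-zG)^{-1}N$, and from $F = G - NDN^*$ I extract the algebraic identity $zNDN^* = (I-zF) - (I-zG)$. Sandwiching this by $(I-zG)^{-1}$ on the left and $(I-zF)^{-1}$ on the right produces $z(I-zG)^{-1}NDN^*(I-zF)^{-1} = (I-zG)^{-1} - (I-zF)^{-1}$; expanding $W(z)\cdot(I - zDN^*(I-zF)^{-1}N)$, the four resulting terms collapse to $I$, and the reverse product is handled by the symmetric sandwich.
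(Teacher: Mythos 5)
Your proof is correct. Note that the paper itself gives no argument for this lemma beyond the one-line remark that ``all the statements follow from Proposition \ref{ParContr} and Remark \ref{stut1}'', so your detailed verification supplies exactly what the authors leave to the reader, and it proceeds from the same starting point, namely the parametrization $F=-NDN^*+D_{N^*}XD_{N^*}$. The individual steps all check out: in (1) the cancellation of $\pm NDN^*$ and the rewriting $F'=I-D_{N^*}(I-X)D_{N^*}$, $F''=-I+D_{N^*}(I+X)D_{N^*}$ give the contraction bounds cleanly; in (2) the spectral estimate $|x(1-\lambda)/(1-x\lambda)|\le 1$ with pointwise limit $\chi_{[-1,1)}(\lambda)$ correctly identifies the strong limit as $(I+D)P_{\cran(I-D)}$, and the containment $\ran N^*\subseteq\sD_D\subseteq\cran(I-D)$ is indeed the key point that absorbs the boundary degeneracy; in (3) the reduction to $F\pm K_\pm K_\pm^*=F'$ (resp.\ $F''$) together with the second resolvent identity $z(I-zF)^{-1}K_\pm K_\pm^*(I-z(F\pm K_\pm K_\pm^*))^{-1}=\pm\left((I-z(F\pm K_\pm K_\pm^*))^{-1}-(I-zF)^{-1}\right)$ makes both products collapse to $I$; and in (4) the identity $zNDN^*=(I-zF)-(I-zG)$ with $G=(F'+F'')/2=D_{N^*}XD_{N^*}$ does the same job. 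The invertibility of $I-zF$, $I-zF'$, $I-zF''$, $I-zG$ on $\dC\setminus\{(-\infty,-1]\cup[1,+\infty)\}$ is justified since all four operators are selfadjoint contractions. No gaps.
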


\subsection{The class $\cRS(\sM)$ and Stieltjes and inverse Stieltjes families}
The next statement is established in \cite[Lemma 3.2]{ArlHassi_2019[2]}.
It gives relationships between holomorphic operator-valued functions from the combined Nevanlinna-Schur class $\cRS(\sM)$,
see Definition \ref{rsm}, and Stieltjes and inverse Stieltjes families of l.r.'s in the Hilbert space $\sM$.

\begin{lemma} \cite{ArlHassi_2019[2]} \label{TH1}
Let $\Omega\in\cRS(\sM)$. Then for all $\lambda\in \dC\setminus\dR_+$,
\begin{equation}
\label{formula1}
\begin{array}{rl}
\cQ(\lambda)&=-I+2\left(I_\sM-\Omega\left(\cfrac{1+\lambda}{1-\lambda}\right)\right)^{-1}\\
            &=\left\{\left\{\left(I_\sM-\Omega\left(\cfrac{1+\lambda}{1-\lambda}\right)\right)h,
    \left(I_\sM+\Omega\left(\cfrac{1+\lambda}{1-\lambda}\right)\right)h\right\}:\; h\in\sM\right\}
\end{array}
\end{equation}
is a Stieltjes family and
\begin{equation}
\label{formula2}
\begin{array}{rl}
\cR(\lambda)&=I-2\left(I_\sM+\Omega\left(\cfrac{1+\lambda}{1-\lambda}\right)\right)^{-1}\\
            &=\left\{\left\{\left(I_\sM+\Omega\left(\cfrac{1+\lambda}{1-\lambda}\right)\right)h,
    \left(\Omega\left(\cfrac{1+\lambda}{1-\lambda}\right)-I_\sM\right)h\right\}:\; h\in\sM\right\}
\end{array}
\end{equation}
is an inverse Stieltjes family.

Conversely, if $\cQ(\lambda)$ is a Stieltjes family (resp., $\cR(\lambda)$ is an inverse Stieltjes family) in $\sM$,
then there exists a function $\Omega\in\cRS(\sM)$ such that
\eqref{formula1} (resp., \eqref{formula2}) holds.

Furthermore, the functions $\cQ$ in \eqref{formula1} and $\cR$ in \eqref{formula2} are connected by $\cR=-\cQ^{-1}$
and thus $\cQ\in\wt\cS(\sM)$ if and only if $-\cQ^{-1}\in\wt\cS^{(-1)}(\sM)$.
\end{lemma}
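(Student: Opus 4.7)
The plan is to reinterpret both formulas \eqref{formula1}--\eqref{formula2} as compositions of the Cayley transform $\cC$ from \eqref{CALTR} with the M\"obius change of variable $z=\phi(\lambda):=(1+\lambda)/(1-\lambda)$. A direct rewriting yields $\cQ(\lambda)=\cC(-\Omega(\phi(\lambda)))$ and $\cR(\lambda)=\cC(\Omega(\phi(\lambda)))$, while $\phi$ is a M\"obius bijection from $\dC\setminus\dR_+$ onto $\dC\setminus\{(-\infty,-1]\cup[1,+\infty)\}$ that sends $\dC_\pm$ to $\dC_\pm$ and $\dR_-$ to the open interval $(-1,1)$. Under this dictionary the three defining features of a Stieltjes family on $\dC\setminus\dR_+$ should correspond exactly to the three defining features of $\cRS(\sM)$ on the image domain.

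For the forward direction I would fix $\Omega\in\cRS(\sM)$, set $\cQ(\lambda)=\cC(-\Omega(\phi(\lambda)))$, and exploit the graph description $\cQ(\lambda)=\{\{(I-\Omega(z))h,(I+\Omega(z))h\}:h\in\sM\}$. A direct inner-product computation gives
\[
(f',f)=\|h\|^2-\|\Omega(z)h\|^2+2i\IM(\Omega(z)h,h),
\]
so for $\lambda\in\dC_+$ the Nevanlinna property of $\Omega$ yields $\IM(f',f)\ge0$ and hence $\cQ(\lambda)$ is dissipative. The same parametrization also supplies the bounded everywhere-defined inverse $(\cQ(\lambda)+I)^{-1}=\tfrac{1}{2}(I-\Omega(\phi(\lambda)))$, so $-1\in\rho(\cQ(\lambda))$, which upgrades dissipativity to maximal dissipativity; the case $\lambda\in\dC_-$ is completely analogous. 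For $x<0$ the value $z=\phi(x)$ lies in $(-1,1)$ and $\Omega(z)=\Omega(z)^*$ with $-I\le\Omega(z)\le I$, so the same computation with $\IM(\Omega(z)h,h)=0$ shows $\cQ(x)$ symmetric and nonnegative, while the bounded resolvent at $-1$ promotes symmetry to selfadjointness. Holomorphy on $\dR_-$ is transferred from that of $\Omega$ and $\phi$ through the resolvent identities \eqref{connectresab}.

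For the converse, given a Stieltjes family $\cQ$ I would define $\Omega(z):=-\cC(\cQ(\phi^{-1}(z)))$ with $\phi^{-1}(z)=(z-1)/(z+1)$, and invoke the involutivity of $\cC$ to invert \eqref{formula1}. The membership $\Omega\in\cRS(\sM)$ is then verified by running the forward arguments in reverse: for $x\in(-1,1)$ the value $\cQ(\phi^{-1}(x))$ is nonnegative selfadjoint, so its Cayley transform is a selfadjoint contraction with $-I\le\Omega(x)\le I$; for $z\in\dC_\pm$ the maximal dissipative/accumulative property of $\cQ$ delivers the Nevanlinna property of $\Omega$ through the same inner-product identity; and holomorphy of $\Omega$ transfers back via \eqref{connectresab}. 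The inverse Stieltjes statement follows by the same scheme with the sign of $\Omega$ reversed in the graph, and the identity $\cR(\lambda)=-\cQ(\lambda)^{-1}$ is read directly off the two graph descriptions, since inverting $\cQ(\lambda)$ swaps its two components and a further sign change produces exactly $\cR(\lambda)$.

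The principal technical difficulty is to upgrade dissipativity/symmetry to their maximal/selfadjoint counterparts in the linear relations setting, and, in the converse direction, to guarantee that $\Omega$ takes values in $\bB(\sM)$ on its entire intended domain. Both points are handled uniformly by the observation that the forward parametrization automatically gives $-1\in\rho(\cQ(\lambda))$ via an explicit bounded inverse, and, in the converse direction, that $-1\in\rho(\cQ(\lambda))$ for every $\lambda\in\dC\setminus\dR_+$ follows from the Stieltjes structure together with holomorphic continuation from $\dR_-$, where it is immediate from the nonnegative selfadjointness.
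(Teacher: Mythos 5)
The paper itself does not prove this lemma: it is quoted from \cite[Lemma 3.2]{ArlHassi_2019[2]}, so there is no in-paper argument to compare against and your proposal has to stand on its own. Your forward direction is essentially sound. The graph form of the Cayley transform gives $(f',f)=\|h\|^2-\|\Omega(z)h\|^2+2i\,\IM(\Omega(z)h,h)$ exactly as you say, the Nevanlinna property of $\Omega$ yields dissipativity/accumulativity, and $(\cQ(\lambda)+I)^{-1}=\tfrac12\bigl(I-\Omega\bigl(\tfrac{1+\lambda}{1-\lambda}\bigr)\bigr)$ puts $-1$ in $\rho(\cQ(\lambda))$. To make the ``upgrade'' to maximality honest you should add that $\cQ(\lambda)$ is closed (the map $h\mapsto\{(I-\Omega)h,(I+\Omega)h\}$ is bounded below, so its range is closed), so that $\rho(\cQ(\lambda))$ is open and meets $\dC_-$, after which the standard open--closed argument in $\dC_-$ gives $\dC_-\subset\rho(\cQ(\lambda))$. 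Two small repairs: $\cR(\lambda)=-\cC(\Omega(\phi(\lambda)))$, not $\cC(\Omega(\phi(\lambda)))$ (compare second components in \eqref{formula2} with \eqref{CALTR}), and you should record $\cQ(\lambda)^*=\cQ(\bar\lambda)$, which follows from $\Omega(\bar z)=\Omega(z)^*$.

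The genuine gap is in the converse, at exactly the point you yourself flag. To define $\Omega(z)=I-2(\cQ(\phi^{-1}(z))+I)^{-1}$ as a $\bB(\sM)$-valued function you need $-1\in\rho(\cQ(\lambda))$ for \emph{every} $\lambda\in\dC\setminus\dR_+$, and ``holomorphic continuation from $\dR_-$'' does not deliver this. The set of $\lambda$ with $-1\in\rho(\cQ(\lambda))$ is open, but to conclude it exhausts $\dC_\pm$ you must show it is also relatively closed there, which requires a bound on $\|(\cQ(\lambda)+I)^{-1}\|$ that does not degenerate along a sequence. Maximal dissipativity alone gives no such bound: for $\IM\lambda>0$ it only yields $W(\cQ(\lambda))\subset\overline{\dC_+}$, and $-1$ lies on the boundary of that half-plane, so the numerical range may approach $-1$. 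What actually closes the argument is the sectorial localization $W(\cQ(\lambda))\subset\{w:\arg w\in[0,\pi-\arg\lambda]\}$, which keeps $-1$ at a positive distance from $W(\cQ(\lambda))$ and in the component of the complement meeting $\rho(\cQ(\lambda))\supset\dC_-$. But in the present paper that localization is Theorem \ref{cbvgfn}, whose proof relies on Lemma \ref{TH1}; so you must obtain it independently, for instance from the characterization (quoted in the Introduction from \cite{KacK}, \cite{ArlHassi_2019[2]}) that $\cQ$ is Stieltjes if and only if $\lambda\cQ(\lambda)$ is also a Nevanlinna family, which pins $\arg$ of the numerical range into $[0,\pi-\arg\lambda]$. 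Without some such input the converse direction is not established.
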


\section{Sectorialness of Stieltjes and inverse Stieltjes families}

Let $\cQ(\lambda)$ be a Stieltjes family in $\sM$.
Then by Lemma \ref{TH1} there is $\Omega(z)\in\cRS(\sM)$ such that
\[
\cQ(\lambda)=-I+2\left(I-\Omega\left(\cfrac{1+\lambda}{1-\lambda}\right)\right)^{-1}.
\]
Clearly $\mul\cQ(\lambda)=\ker(I-\Omega(0))=\sM\ominus\cdom \cQ(-1)$. Moreover,
using \eqref{hfpkj} and \eqref{struct11}, we see that
$
\cQ(\lambda)=\cQ_s(\lambda)\oplus \cQ_\infty,
$
where
\[
\cQ_\infty=\{0\}\times\ker(I-\Omega(0)),
\]
and $\cQ_s(\lambda)$ is an operator-valued function from the Stieltjes class $\wt \cS(\cdom\cQ(-1))$.
If $\{f,f'\}\in \cQ(\lambda)$ then for the quadratic form $(f',f)$ we use the expressions
\[
 \cQ(\lambda)[f]:=(f',f)=(\cQ_s(\lambda)f,f).
\]
Similarly for an inverse Stieltjes family $\cR(\lambda)$ on $\sM$ the representation \eqref{formula2} in Lemma \ref{TH1}
and \eqref{struct11} show that $\mul \cR(\lambda)=\ker(I+\Omega(0))=\sM\ominus\cdom \cR(-1)$ and
$
\cR(\lambda)=\cR_s(\lambda)\oplus \cR_\infty,
$
where
\[
\cR_\infty=\{0\}\times\ker(I+\Omega(0)),
\]
and $\cR_s(\lambda)$ is an operator-valued function from the inverse Stieltjes class $\wt \cS^{(-1)}(\cdom\cR(-1))$.
Moreover, for the quadratic form one has
\[
 \cR(\lambda)[f]:=(f',f)=(\cR_s(\lambda)f,f), \quad \{f,f'\}\in \cR(\lambda).
\]
Recall that $S(\varphi)$ stands for the sector
$$ S(\varphi):=\{\lambda\in \dC:|\arg \lambda|\le\varphi\},\;\varphi\in[0,\pi/2).$$

\begin{theorem}\label{cbvgfn}
Let $\cQ(\lambda)$ be a Nevanlinna family in the Hilbert space $\sM$ which is holomorphic on $\dC\setminus\dR_+$.
Then the following assertions are equivalent:
\begin{enumerate}
\def\labelenumi{\rm (\roman{enumi})}
\item $\cQ(\lambda)$ is a Stieltjes family;
\item the following inequality holds
\[
\RE\cQ(\lambda)[f]+\cfrac{\RE\lambda}{|\IM\lambda|}\,\,|\IM\cQ(\lambda)[f]|\ge 0,\;f\in\dom\cQ(\lambda),\; \IM\lambda\ne 0;
\]
\item $\lambda\cQ(\lambda)$ is a Nevanlinna family in $\sM$.
\end{enumerate}

Furthermore, a Stieltjes family $\cQ(\lambda)$, $\lambda\in\dC\setminus\dR_+$, in the Hilbert space $\sM$ admits the following properties:

If $\arg\lambda\in[-\pi,\pi)$, then
\[
\begin{array}{l}
\RE\lambda<0\Rightarrow
|\IM\cQ(\lambda)[f]|\le\frac{|\IM\lambda|}{-\RE\lambda}\RE\cQ(\lambda)[f]\;\forall f\in\dom\cQ(\lambda)\\[3mm]
\qquad\qquad \Longleftrightarrow W(\cQ(\lambda))\subset S(\pi-|\arg\lambda|);\\[3mm]
\RE\lambda=0\Rightarrow \RE \cQ(\lambda)[f]\geq 0\;\forall f\in\dom\cQ(\lambda)
\Longleftrightarrow \cQ(\lambda)\quad\mbox{is an m-accretive l.r.};\\[3mm]
\RE\lambda>0,\; \left\{ \begin{array}{l}
    \IM\lambda>0\Rightarrow W\left(\exp(-i(\pi-|\arg\lambda|)/2)\cQ(\lambda)\right)\subset S({\frac{\pi-|\arg\lambda|}{2}}),\\[3mm]
    \IM\lambda<0\Rightarrow W\left(\exp(i(\pi-|\arg\lambda|)/2)\cQ(\lambda)\right)\subset S({\frac{\pi-|\arg\lambda|}{2}}).
    \end{array}\right.
\end{array}
\]
In particular, if $\beta\in[0,\pi/2)$, then
\[
\begin{array}{l}
\arg\lambda\in [\pi-\beta,\pi+\beta]\Longrightarrow W(\cQ(\lambda))\subset S(\beta),\\
\arg\lambda\in [\beta,\pi/2]\;(\beta\neq 0)\,\Longrightarrow W(\cQ(\lambda))\subset\{\xi\in\dC:\arg\xi\in [0,\pi-\beta]\},\\
\arg\lambda\in [-\pi/2,-\beta]\;(\beta\neq 0)\,\Longrightarrow W(\cQ(\lambda))\subset\{\xi\in\dC:\arg\xi\in [\pi+\beta,2\pi]\}.
\end{array}
\]
\end{theorem}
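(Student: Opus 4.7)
My strategy is the cycle (i)$\Rightarrow$(ii)$\Rightarrow$(iii)$\Rightarrow$(i), with the sectorial bullets falling out as trigonometric corollaries of (ii). The main engine is Lemma~\ref{TH1} combined with Theorem~\ref{newchar}: every Stieltjes family is $\cQ(\lambda)=-I+2(I-\Omega(z))^{-1}$ with $\Omega\in\cRS(\sM)$ and $z=(1+\lambda)/(1-\lambda)$, while $\cRS(\sM)$ is characterized by the inequality \eqref{ythfd}.

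For (i)$\Rightarrow$(ii), write $\{f,f'\}=\{(I-\Omega(z))h,(I+\Omega(z))h\}\in\cQ(\lambda)$; then $\RE\cQ(\lambda)[f]=\|h\|^2-\|\Omega(z)h\|^2$ and $\IM\cQ(\lambda)[f]=2\IM(\Omega(z)h,h)$. The elementary identities
\[
1-|z|^2=-\frac{4\RE\lambda}{|1-\lambda|^2},\qquad \IM z=\frac{2\IM\lambda}{|1-\lambda|^2}
\]
give $(1-|z|^2)/\IM z=-2\RE\lambda/\IM\lambda$, which turns \eqref{ythfd} into exactly the inequality in (ii), after handling $\IM\lambda\gtrless 0$ separately via the Nevanlinna sign of $\IM\cQ(\lambda)[f]$. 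The equivalence (ii)$\Leftrightarrow$(iii) is a one-line rearrangement: multiplying (ii) by $|\IM\lambda|$ and using $|\IM\cQ(\lambda)[f]|=\mathrm{sgn}(\IM\lambda)\IM\cQ(\lambda)[f]$ yields $\mathrm{sgn}(\IM\lambda)\IM(\lambda\cQ(\lambda)[f])\geq 0$, i.e.\ form-dissipativity of $\lambda\cQ(\lambda)$; combined with the resolvent formula $(\lambda\cQ(\lambda)-\mu I)^{-1}=\lambda^{-1}(\cQ(\lambda)-\mu\lambda^{-1}I)^{-1}$ and the Nevanlinna resolvent structure of $\cQ$, this is equivalent to $\lambda\cQ(\lambda)$ being a Nevanlinna family. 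For (iii)$\Rightarrow$(i), Nevanlinna plus holomorphy on $\dC\setminus\dR_+$ already forces $\cQ(x)$ to be selfadjoint for $x<0$. Fix $h\in\sM$ and work at the resolvent level: the scalar function $r_h(\lambda):=((\cQ(\lambda)-iI)^{-1}h,h)$ is a scalar Nevanlinna function defined on all of $\dC\setminus\dR_+$, and (iii) forces $\lambda\, r_h(\lambda)$ to belong to the same class; passing back to the quadratic form via the resulting scalar Nevanlinna integral representation (whose measure must then be supported in $[0,\infty)$) yields $\cQ(x)[h]\ge -x\, \cQ'(x)[h]\ge 0$ for $h$ in the form closure, so $\cQ(x)$ is nonnegative as a selfadjoint l.r.

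For the sectorial bullets, set $\varphi=\arg\lambda$ and $\psi=\arg\cQ(\lambda)[f]$. When $\IM\lambda>0$ the Nevanlinna property pins $\psi\in[0,\pi]$; dividing (ii) by $|\cQ(\lambda)[f]|>0$ rearranges into $\sin(\varphi+\psi)\ge 0$, and each of the three numerical-range sector claims in the cases $\RE\lambda<0$, $=0$, $>0$ is a one-line trigonometric consequence (the $\RE\lambda=0$ case also uses that (ii) at $\bar\lambda=-\lambda$ makes $\cQ(\lambda)^*$ accretive, giving $m$-accretivity). The $\IM\lambda<0$ situation is handled symmetrically with the opposite rotation $\exp(i(\pi-|\arg\lambda|)/2)$, and the three wedge specializations at the end are direct instantiations of the first three bullets on the wedges $\arg\lambda\in[\pi-\beta,\pi+\beta]$, $[\beta,\pi/2]$, and $[-\pi/2,-\beta]$. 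Maximal sectoriality in each case (beyond the numerical-range estimate) is inherited from the $m$-dissipative/accumulative values of the Nevanlinna family $\cQ$ applied at both $\lambda$ and $\bar\lambda$ via $\cQ(\bar\lambda)=\cQ(\lambda)^*$.

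The most delicate step I expect is (iii)$\Rightarrow$(i): extracting pointwise nonnegativity of $\cQ(x)$ on $\dR_-$ from the Nevanlinna properties of $\cQ$ and $\lambda\cQ(\lambda)$, without circular use of the form-domain invariance of Theorem~\ref{typB}, which is only proved later in the paper. Routing the scalar reductions through the globally-defined resolvent functionals $((\cQ(\lambda)-iI)^{-1}h,h)$, rather than through $\cQ(\lambda)[h]$ itself (whose domain depends a priori on $\lambda$), circumvents this circularity and reduces the remaining content to classical scalar Herglotz/Stieltjes theory.
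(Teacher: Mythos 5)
Your treatment of (i)$\Rightarrow$(ii), of the equivalence (ii)$\Leftrightarrow$(iii), and of the trigonometric derivation of the sectoriality statements coincides with the paper's argument (same transform $z=(1+\lambda)/(1-\lambda)$, same identity $(1-|z|^2)/\IM z=-2\RE\lambda/\IM\lambda$, same use of Lemma~\ref{TH1} and Theorem~\ref{newchar}, same rearrangement $\IM(\lambda f',f)=\RE\lambda\,\IM(f',f)+\IM\lambda\,\RE(f',f)$). The difference is structural: the paper does not close a cycle through (iii)$\Rightarrow$(i); it proves (i)$\Leftrightarrow$(ii) as a genuine two-way equivalence, because Lemma~\ref{TH1} is a bijection between Stieltjes families and the class $\cRS(\sM)$ and Theorem~\ref{newchar} characterizes $\cRS(\sM)$ exactly by the inequality \eqref{ythfd}, so the chain ``$\cQ$ Stieltjes $\Leftrightarrow\Omega\in\cRS(\sM)\Leftrightarrow$ \eqref{ythfd} $\Leftrightarrow$ (ii)'' is reversible with no extra work.

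The leg you single out as delicate, (iii)$\Rightarrow$(i) via the scalar functionals $r_h(\lambda)=((\cQ(\lambda)-iI)^{-1}h,h)$, contains a genuine gap. First, $(\cQ(\lambda)-iI)^{-1}$ need not exist as a bounded operator for $\lambda\in\dC_+$: there $\cQ(\lambda)$ is maximal dissipative, so only the open lower half-plane is guaranteed to lie in $\rho(\cQ(\lambda))$; already for $\cQ(\lambda)=\lambda I$ the operator $\cQ(i)-iI=0$ is not invertible. Second, even where it is defined, $r_h$ is not a Nevanlinna function: for $\cQ(\lambda)=\lambda I$ one gets $r_h(\lambda)=\|h\|^2/(\lambda-i)$, whose imaginary part is negative for $\IM\lambda>1$. (The standard invariance $M\mapsto-(M+\tau)^{-1}$ of the Nevanlinna class holds only for real $\tau$, and for real $\tau$ the inverse is in general again a relation, not a bounded operator, so no fixed scalar shift rescues the construction.) The subsequent appeal to a scalar Stieltjes integral representation therefore has no foundation. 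The gap is easily repaired without Theorem~\ref{typB}: either adopt the paper's reversible route above, or argue directly that (iii) together with the Nevanlinna property gives, for $\{f,f'\}\in\cQ(\lambda)$ with $\RE\lambda<0$, $\IM\lambda>0$, the inequality $\IM\lambda\,\RE(f',f)\ge-\RE\lambda\,\IM(f',f)\ge0$, whence $\RE(f',f)\ge0$; letting $\IM\lambda\downarrow0$ and using resolvent continuity yields $(f',f)\ge0$ for $\{f,f'\}\in\cQ(x)$, $x<0$, which together with the selfadjointness $\cQ(x)=\cQ(\bar x)^*=\cQ(x)^*$ gives (i).
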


\begin{proof}
First the equivalence of assertions (i) -- (iii) is shown.

(i) $\Longleftrightarrow$ (ii) Denote $z=\frac{1+\lambda}{1-\lambda}$, $\lambda\in\dC\setminus \dR_+$.
Then equivalently $z\in \dC\setminus \left((-\infty,-1] \cup [1,\infty)\right)$ and one has
\begin{equation}\label{zlambda}
 \frac{1-|z|^2}{\IM z}=-\frac{2 \RE \lambda}{\IM \lambda}, \quad \IM \lambda \neq 0.
\end{equation}
By Lemma \ref{TH1} $\cQ(\lambda)$ is given by the transform \eqref{formula1} for some $\Omega\in\cRS(\sM)$,
in fact,
\[
 \Omega(z)=\left\{\left\{\left(I_\sM+\cQ\left(\lambda\right)\right)h,
    \left(\Omega\left(\lambda\right)-I_\sM\right)h\right\}:\; h\in\sM\right\},
    \quad \lambda=\cfrac{z-1}{z+1}.
\]
Therefore, $\{f,f'\}\in \cQ(\lambda)$ precisely when $\{g,g'\}:=\{f+f',f'-f\}\in \Omega(z)$.
Since $\|g\|^2+\|g'\|^2=4 \RE(f',f)$ and $\IM (g',g)=2\IM (f',f)$ we conclude from
Theorem \ref{newchar}  that the inequality \eqref{ythfd} with $\{g,g'\}\in \Omega(z)$ is equivalent to
\[
 \RE (f',f) + \frac{\RE \lambda}{\IM \lambda}\, \IM (f',f) \geq 0.
\]
Since $\cQ(\lambda)$ is a Nevanlinna family, one has $\dfrac{\IM (f',f)}{\IM \lambda}\geq 0$.
This proves the claim.

(ii) $\Longleftrightarrow$ (iii) Let $\{f,f'\}\in \cQ(\lambda)$.
Then equivalently $\{f,\lambda f'\}\in \lambda\cQ(\lambda)$ and
\[
 \IM (\lambda f',f) = \RE\lambda\; \IM (f',f) + \IM\lambda\; \RE (f',f).
\]
Therefore the inequality in (ii) means that $\CM(\lambda):=\lambda \cQ(\lambda)$
is dissipative for $\lambda\in\dC_+$ and accumulative for $\lambda\in\dC_-$.
Since $\cQ(\lambda)$ is a Nevanlinna family, one has $\CM(\lambda)^*=\CM(\bar\lambda)$, $\lambda\in\cmr$.
Moreover, $\cQ(\lambda)$ is maximal dissipative for $\lambda\in\dC_+$, i.e.,
$\ran (\cQ(\lambda)+\mu)=\sM$ for some, equivalently for all,
$\mu \in\dC_+$ and $(\cQ(\lambda)+\mu)^{-1}$ is holomorphic as a function of $\lambda\in\dC_+$.
Then also $\ran (\CM(\lambda)+\lambda\mu)=\lambda\, \ran (\cQ(\lambda)+\mu)=\sM$ and
$(\CM(\lambda)+\lambda\mu)^{-1}=\frac{1}{\lambda} (\cQ(\lambda)+\mu)^{-1}$ is holomorphic at $\lambda$,
when $\mu$ is chosen such that $0<\arg \mu <\pi-\arg \lambda$. One concludes that $\CM(\lambda)=\lambda \cQ(\lambda)$
is a Nevanlinna family if and only if $\cQ(\lambda)$ satisfies the inequality in (ii).
Hence, the equivalence of (i) -- (iii) is shown.

Now the remaining assertions can be easily proved. The statements with $\RE \lambda<0$ and $\RE \lambda=0$ are clear from
the inequality in (ii).

Assume that $\RE \lambda>0$. Since $\cQ(\lambda)$ and $\lambda\cQ(\lambda)$ are Nevanlinna families
one has $\IM (\cQ(\lambda)f,f)\geq 0$ and $\IM (\lambda\,\cQ(\lambda)[f])\geq 0$ for all $f\in\dom\cQ(\lambda)$ when $\IM \lambda >0$.
In this case $0\leq \arg \cQ(\lambda)[f] + \arg\lambda \leq \pi$ or, equivalently,
\[
 -\frac{\pi-\arg\lambda}{2} \leq \arg\left(\exp(-i(\pi-\arg\lambda)/2)\cQ(\lambda)[f]\right) \leq \frac{\pi-\arg\lambda}{2},
\]
where $0<\arg\lambda<\pi/2$. Similarly, if $\IM \lambda<0$ then $-\pi\leq \arg \cQ(\lambda)[f] + \arg\lambda \leq 0$ or, equivalently,
\[
 -\frac{\pi-|\arg\lambda|}{2} \leq \arg\left(\exp(i(\pi-|\arg\lambda|)/2)\cQ(\lambda)[f]\right) \leq \frac{\pi-|\arg\lambda|}{2},
\]
where $-\pi/2<\arg\lambda<0$. This gives the assertions for $\RE \lambda>0$.

The last three implications concerning the numerical range of $\cQ(\lambda)$ are clear.
\end{proof}

\begin{theorem}
\label{cbvgfn2}
Let $\cR(\lambda)$ be a Nevanlinna family in the Hilbert space $\sM$ which is holomorphic on $\dC\setminus\dR_+$.
Then the following assertions are equivalent:
\begin{enumerate}
\def\labelenumi{\rm (\roman{enumi})}
\item $\cR(\lambda)$ is an inverse Stieltjes family;
\item the following inequality holds
\[
\begin{array}{l}
\RE\cR(\lambda)[f]-\cfrac{\RE\lambda}{\IM\lambda}\,\,\IM\cR(\lambda)[f]\le 0\\
\Longleftrightarrow\RE(-\cR(\lambda)[f])+\cfrac{\RE\lambda}{|\IM\lambda|}\,\,|\IM(-\cR(\lambda)[f])|
\ge 0,\;f\in\dom\cR(\lambda),\; \IM\lambda\ne 0;
\end{array}
\]
\item $\dfrac{\cR(\lambda)}{\lambda}$ is a Nevanlinna family in $\sM$.
\end{enumerate}

Furthermore, an inverse Stieltjes family $\cR(\lambda)$, $\lambda\in\dC\setminus\dR_+$, in the Hilbert space $\sM$ admits the following properties:

If $\arg\lambda\in[-\pi,\pi)$, then
\[
\begin{array}{l}
\RE\lambda<0\Rightarrow
|\IM(-\cR(\lambda)[f])|\le\frac{|\IM\lambda|}{-\RE\lambda}\RE(-\cR(\lambda)[f])\;\forall f\in\dom\cR(\lambda)\\[3mm]
\iff W(-\cR(\lambda))\subset S({\pi-|\arg\lambda|});\\[3mm]
\RE\lambda=0\Longrightarrow \RE (-\cR(\lambda)[f])\ge 0\;\forall f\in\dom\cR(\lambda)
\Longleftrightarrow -\cR(\lambda)\quad\mbox{is an m-accretive l.r.};\\[3mm]
\RE\lambda>0,\; \left\{ \begin{array}{l}
     \IM\lambda>0 \Longrightarrow W\left(\exp(-i(\pi+|\arg\lambda|)/2)\cR(\lambda)\right)\subset S({\frac{\pi-|\arg\lambda|}{2}}),\\[3mm]
     \IM\lambda<0 \Longrightarrow W\left(\exp(i(\pi+|\arg\lambda|)/2)\cR(\lambda)\right)\subset S({\frac{\pi-|\arg\lambda|}{2}}).
    \end{array}\right.

\end{array}
\]
In particular, if $\beta\in[0,\pi/2)$, then
\[
\begin{array}{l}
\arg\lambda\in [\pi-\beta,\pi+\beta]\Longrightarrow W(\cR(\lambda))\subset\{\xi\in\dC:\arg\lambda\in[\pi-\beta,\pi+\beta]\},\\
\arg\lambda\in [\beta,\pi/2]\; (\beta\neq 0)\,\Longrightarrow W(\cR(\lambda))\subset\{\xi\in\dC:\arg\xi\in [\beta,\pi]\},\\
\arg\lambda\in [-\pi/2,-\beta]\;(\beta\neq 0)\,\Longrightarrow W(\cR(\lambda))\subset\{\xi\in\dC:\arg\xi\in [-\pi,-\beta]\}.
\end{array}
\]
\end{theorem}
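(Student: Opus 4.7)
The theorem is the structural twin of Theorem \ref{cbvgfn}, with inverse Stieltjes in place of Stieltjes families, so my plan is to mirror that proof using the companion formula \eqref{formula2} of Lemma \ref{TH1} in place of \eqref{formula1}; an equally clean route is to exploit the duality $\cR(\lambda)=-\cQ(\lambda)^{-1}$ from Lemma \ref{TH1} and transport each item of Theorem \ref{cbvgfn} through the involution $A\mapsto -A^{-1}$. Taking the direct path, for (i)$\Longleftrightarrow$(ii) I would start from $\cR(\lambda)=\{\{(I+\Omega(z))h,(\Omega(z)-I)h\}:h\in\sM\}$ with $z=(1+\lambda)/(1-\lambda)$ and, for any $\{f,f'\}\in\cR(\lambda)$, set $g:=f-f'$ and $g':=f+f'$. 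A direct check shows $g'=\Omega(z)g$ together with $\|g\|^2-\|g'\|^2=-4\RE(f',f)$ and $\IM(g',g)=2\IM(f',f)$. Substituting these and \eqref{zlambda} into the inequality \eqref{ythfd} of Theorem \ref{newchar} yields precisely
\[
 \RE\cR(\lambda)[f]-\frac{\RE\lambda}{\IM\lambda}\IM\cR(\lambda)[f]\le 0,
\]
which is (ii); the two forms in (ii) coincide because the Nevanlinna property forces $\IM\lambda$ and $\IM\cR(\lambda)[f]$ to share the same sign. Running the chain backwards and invoking Theorem \ref{newchar} returns $\Omega\in\cRS(\sM)$, hence $\cR\in\wt\cS^{(-1)}(\sM)$. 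For (ii)$\Longleftrightarrow$(iii) I would set $\cN(\lambda):=\cR(\lambda)/\lambda$: a short computation yields $\IM(\lambda^{-1}f',f)=|\lambda|^{-2}[\RE\lambda\,\IM(f',f)-\IM\lambda\,\RE(f',f)]$, so that dissipativity of $\cN(\lambda)$ at $\IM\lambda>0$ (respectively accumulativity at $\IM\lambda<0$) is equivalent to (ii). The symmetry $\cN(\lambda)^*=\cN(\bar\lambda)$ is transparent; maximality and holomorphy in $\lambda$ follow from the identity $(\cN(\lambda)+\mu I)^{-1}=\lambda(\cR(\lambda)+\lambda\mu I)^{-1}$, valid for $\mu$ in the wedge $0<\arg\mu<\pi-\arg\lambda$ (when $\IM\lambda>0$), so that $\lambda\mu\in\dC_+$ and the right-hand side is well defined through the maximal dissipativity of $\cR(\lambda)$.

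For the numerical-range consequences I would split on the sign of $\RE\lambda$. When $\RE\lambda<0$, combining (ii) with $\IM(f',f)/\IM\lambda\ge 0$ gives $\RE(-\cR(\lambda)[f])\ge(|\RE\lambda|/|\IM\lambda|)|\IM(-\cR(\lambda)[f])|$, which translates directly into $W(-\cR(\lambda))\subset S(\pi-|\arg\lambda|)$. When $\RE\lambda=0$, (ii) collapses to $\RE(-\cR(\lambda)[f])\ge 0$, i.e., accretivity of $-\cR(\lambda)$; upgrading to $m$-accretivity uses the range identity $\ran(-\cR(\lambda)+\xi I)=\ran(\cR(\lambda)-\xi I)$ combined with any $\xi$ satisfying $\RE\xi>0$ and $\IM\xi<0$, so that the right-hand side is $\sM$ by maximal dissipativity of $\cR(\lambda)$ at $\IM\lambda>0$ (the case $\IM\lambda<0$ then follows from $-\cR(\bar\lambda)=(-\cR(\lambda))^*$). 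When $\RE\lambda>0$, the two inequalities $\IM\cR(\lambda)[f]\ge 0$ and $\RE\cR(\lambda)[f]\le\cot(\arg\lambda)\IM\cR(\lambda)[f]$ localize $\arg\cR(\lambda)[f]$ in $[\arg\lambda,\pi]$; a rotation by $\exp(-i(\pi+\arg\lambda)/2)$ centers this interval at the origin with half-width $(\pi-\arg\lambda)/2$, yielding the stated sector, and $\IM\lambda<0$ follows by complex conjugation using $\cR(\bar\lambda)=\cR(\lambda)^*$. The three explicit $\beta$-implications are then a bookkeeping translation of these sector bounds to the specified ranges of $\arg\lambda$.

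The main obstacle is the seemingly innocuous upgrade from accretivity to $m$-accretivity of $-\cR(\lambda)$ at $\RE\lambda=0$: accretivity is immediate from (ii), but $m$-accretivity requires surjectivity of $-\cR(\lambda)+\xi I$ for some $\RE\xi>0$, which is \emph{not} directly furnished by the Nevanlinna property (which supplies surjectivity of $\cR(\lambda)-\xi I$ only for $\IM\xi<0$). The observation that $\ran(-\cR(\lambda)+\xi I)=\ran(\cR(\lambda)-\xi I)$ lets one overlap the two regions by choosing $\xi$ in the open fourth quadrant, bridging the gap. A secondary source of friction lies in keeping the sign and argument conventions straight in the rotated sector statements for $\RE\lambda>0$: the multiplier here is $\exp(\mp i(\pi+|\arg\lambda|)/2)$, the inverse-Stieltjes counterpart of the Stieltjes multiplier $\exp(\mp i(\pi-|\arg\lambda|)/2)$ appearing in Theorem \ref{cbvgfn}.
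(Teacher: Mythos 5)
Your proposal is correct and follows essentially the same route as the paper, which itself only sketches this proof by reference to Theorem \ref{cbvgfn}: the substitution $\{f,f'\}\mapsto\{f\mp f',f\pm f'\}$ into the inequality \eqref{ythfd} of Theorem \ref{newchar} for (i)$\Leftrightarrow$(ii), the computation of $\IM(\lambda^{-1}f',f)$ for (ii)$\Leftrightarrow$(iii), and the localization of $\arg\cR(\lambda)[f]$ for the sector statements. Your explicit bridging of accretivity to $m$-accretivity at $\RE\lambda=0$ (choosing $\xi$ in the fourth quadrant so that $\ran(-\cR(\lambda)+\xi I)=\ran(\cR(\lambda)-\xi I)=\sM$) supplies a detail the paper leaves implicit, and it is correct.
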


\begin{proof}
Since the proof is similar to the proof of Theorem \ref{cbvgfn} only the main steps are pointed out here.

Using formula \eqref{formula2} in Lemma \ref{TH1} it is seen that
$\{f,f'\}\in -\cR(\lambda)$ precisely when $\{g,g'\}:=\{f+f',f-f'\}\in \Omega(z)$.
Since $\|g\|^2+\|g'\|^2=4 \RE(f',f)$ and $\IM (g',g)=-2\IM (f',f)$ we conclude from
Theorem \ref{newchar} and the formula \ref{zlambda} that the inequality \eqref{ythfd} with $\{g,g'\}\in \Omega(z)$ is equivalent to
\[
 \RE (f',f) - \frac{\RE \lambda}{\IM \lambda}\, \IM (f',f) \geq 0,
\]
i.e.,
\[
 \RE(-\cR(\lambda)[f])-\cfrac{\RE\lambda}{\IM\lambda}\,\,\IM(-\cR(\lambda)[f]) \ge 0,\quad f\in\dom\cR(\lambda),\; \IM\lambda\ne 0.
\]
This yields the equivalence (i) $\Longleftrightarrow$ (ii).

On the other hand, with $\{f,f'\}\in \cR(\lambda)$ one has
$\{f,1/\lambda f'\}\in \frac{\cR(\lambda)}{\lambda}$ and
\[
 \IM (1/\lambda f',f) = \IM \left(\overline{\lambda} (f',f)\right) = \RE\lambda\; \IM (f',f) - \IM\lambda\; \RE (f',f),
\]
or, equivalently,
\[
 \IM \left(\frac{\cR(\lambda)[f]}{\lambda}\right) =  \RE\lambda\; \IM \cR(\lambda)[f] - \IM\lambda\; \RE \cR(\lambda)[f],\quad
  f\in\dom\cR(\lambda),\; \IM\lambda\ne 0.
\]
Therefore the inequality in (ii) means that $\frac{\cR(\lambda)}{\lambda}$
is dissipative for $\lambda\in\dC_+$ and accumulative for $\lambda\in\dC_-$,
which then yields the equivalence  (ii) $\Longleftrightarrow$ (iii).

As to the remaining assertions, observe that for $\IM \lambda >0$ one has $0\leq \arg \cR(\lambda)[f]\leq \pi$ and
\[
 0\leq \arg\left(\frac{\cR(\lambda)[f]}{\lambda}\right)=\arg \cR(\lambda)[f] - \arg\lambda \leq \pi,
\]
for all $f\in \dom \cR(\lambda)$ or, equivalently,
\begin{equation}\label{ImR1}
 -\pi+\arg\lambda \leq \arg(-\cR(\lambda)[f]) \leq 0.
\end{equation}
Similarly $\IM \lambda <0$ one has $-\pi\leq \arg \cR(\lambda)[f]\leq 0$,
$-\pi \leq \arg \cR(\lambda)[f] - \arg\lambda \leq 0$, and
\begin{equation}\label{ImR2}
 0 \leq \arg(-\cR(\lambda)[f]) \leq \pi-|\arg\lambda|.
\end{equation}
All the statements in the second part of the theorem follow from the inequalities in (ii), \eqref{ImR1}, and \eqref{ImR2}.
\end{proof}

\begin{remark}
For Stieltjes and inverse Stieltjes families another proof of the equivalence (i) $\Longleftrightarrow$ (iii) in Theorems~\ref{cbvgfn} and~\ref{cbvgfn2} is given in \cite{ArlHassi_2019[2]}.
For scalar Stieltjes and inverse Stieltjes functions the corresponding equivalences can be found in \cite{KacK}.
\end{remark}

Let $\cQ$ be a Stieltjes family and let $\cR$ be an inverse Stieltjes family. Theorem \ref{cbvgfn} and Theorem \ref{cbvgfn2} show that
\begin{enumerate}
\item for any $\lambda=r\exp(i\psi)$, $\psi=\pi\pm\alpha,$ where $\alpha\in [0,\pi/2),$  the families $\cQ(\lambda)$ and $-\cR(\lambda)$ are maximal sectorial l.r.'s with the vertex at the origin and the semi-angle $\alpha$;
\item if $\lambda=r\exp(i\psi)$, where $\psi\in(0, \pi/2],$ then
\[
\begin{array}{l}
 \arg(\cQ(\lambda)[f])\in [0,\pi-\psi]\;\forall f\in\dom\cQ(\lambda)\setminus\{0\},\\
  \arg(-\cR(\lambda)[g])\in [\psi-\pi,0]\;\forall g \in\dom\cR(\lambda)\setminus\{0\};
\end{array}
\]
\item if $\lambda=r\exp(i\psi)$, where $\psi\in [-\pi/2,0)$, then
\[
\begin{array}{l}
 \arg(\cQ(\lambda)[f])\in [-\psi-\pi,0]\;\forall f\in\dom\cQ(\lambda)\setminus\{0\},\\
  \arg(-\cR(\lambda)[g])\in [0,\pi+\psi]\;\forall g \in\dom\cR(\lambda)\setminus\{0\}.
\end{array}
\]
\end{enumerate}

Thus, for an arbitrary $\lambda\in \dC\setminus\dR_+$ there exists a real number $\varphi_\lambda$ such that $\exp(i\varphi_\lambda)\cQ(\lambda)$ (respect. $\exp(i\varphi_\lambda)\cR(\lambda)$) is a maximal sectorial l.r. with the vertex at the origin and an acute semi-angle.
Therefore, every Stieljes family $\cQ(\lambda)$ and inverse Stieltjes family $\cR(\lambda)$ on $\sM$
determines a unique (in general nondensely defined) closed sectorial form (up to a rotation) via the closure of
\[
 \cQ(\lambda)[f,g]=(f',g)=(\cQ_s(\lambda)f,g), \quad \{f,f'\},\{g,g'\}\in \cQ(\lambda),
\]
and
\[
\cR(\lambda)[f,g]=(f',g)=(\cR_s(\lambda)f,g), \quad \{f,f'\},\{g,g'\}\in \cR(\lambda),
\]
respectively. By the first representation theorem this leads a one-to-one correspondence
between the closed forms $\cQ(\lambda)[f,g]$ and the representing l.r.'s $\cQ(\lambda)\in \wt\cS(\sM)$,
$\lambda\in\dC\setminus\dR_+$. Similarly there is a one-to-one correspondence
between the closed forms $\cR(\lambda)[f,g]$ and the representing l.r.'s $\cR(\lambda)\in \wt\cS^{(-1)}(\sM)$.
In Section \ref{sesqu} we prove that Stieltjes and inverse Stieltjes holomorphic families of l.r.'s
form holomorphic families of the type (B) in sense of Kato \cite{Ka}.

\section{Constructions of Stieltjes and inverse Stieltjes families}
In this section we give some explicit constructions of Stieltjes and inverse Stieltjes families.
In the first subsection the case of operator functions whose values are bounded operators on a Hilbert space $\sM$ is considered.
These results are used in second subsection to construct Stieltjes and inverse Stieltjes functions whose values are unbounded operators or, more generally, l.r.'s in $\sM$. In the last subsection special behaviour of these functions in the unbounded case is pointed out
by means of an example. The constructions given in this section are shown to be of general nature in Section \ref{sesqu}.

\subsection{Stieltjes/inverse Stieltjes functions with values in $\bB(\sM)$}

\begin{proposition}\label{totyjd} Let $\sM$ and $\sR$ be Hilbert spaces.
Assume that selfadjoint contractions $F',$ $F''$ in $\sK$, and the operator
$N\in\bB(\sM,\sK)$ are connected by the relation
\begin{equation}
\label{stutt3} F'-F''=2NN^*.
\end{equation}
Then the following identities hold for all $z\in\dC\setminus\{(-\infty,-1]\cup[1,+\infty)\}$:
\begin{equation}\label{stuttg121}
-\left(I_\sM+2\,zN^*\left(I_\sK-zF'\right)^{-1}N\right)^{-1}=-I_\sM+2zN^*\left(I_\sK-zF''\right)^{-1}N,
\end{equation}
\begin{equation}\label{stuttg1211}
\left(I_\sM-\,zN^*\left(I_\sK-z\cfrac{F'+F''}{2}\right)^{-1}N\right)^{-1}=I_\sM+zN^*\left(I_\sK-zF'\right)^{-1}N.
\end{equation}
Moreover, the operator-valued function
\begin{equation}\label{jvtufj}
\Omega_0(z):=zN^*\left(I_\sK-z\cfrac{F'+F''}{2}\right)^{-1}N,\quad
  z\in\dC\setminus\{(-\infty,-1]\cup[1,+\infty)\},
\end{equation}
belongs to the class $\cRS(\sM)$ and for all $ z\in\dC\setminus\{(-\infty,-1]\cup[1,+\infty)\}$ one has
\begin{equation}\label{lkgjvj}
\begin{array}{l}
\left(I+\Omega_0(z)\right)\left(I-\Omega_0(z)\right)^{-1}=I_\sM+2\,zN^*\left(I_\sK-zF'\right)^{-1}N, \\
\left(\Omega_0(z)-I\right)\left(I+\Omega_0(z)\right)^{-1}=-I_\sM+2zN^*\left(I_\sK-zF''\right)^{-1}N.
\end{array}
\end{equation}
\end{proposition}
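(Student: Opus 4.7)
The strategy is to realize $\Omega_0(z)$ as the transfer function of a selfadjoint passive discrete-time system built from the given data $(F',F'',N)$, so that Lemma~\ref{LEM1} applies directly. I would first define
\[
 T = \begin{bmatrix} 0 & N^* \\ N & F \end{bmatrix}: \sM \oplus \sK \to \sM \oplus \sK, \quad F = \tfrac{1}{2}(F'+F''),
\]
and verify that $T$ is a selfadjoint contraction. Selfadjointness is immediate, and contractivity reduces to $I \pm T \geq 0$, which via the Schur complement of the $(1,1)$-block $I$ becomes $(I - F) - NN^* \geq 0$ and $(I + F) - NN^* \geq 0$. Using the hypothesis $F' - F'' = 2NN^*$ (so that $F = F' - NN^* = F'' + NN^*$), these simplify respectively to $I - F' \geq 0$ and $I + F'' \geq 0$, both of which hold because $F'$ and $F''$ are selfadjoint contractions; note that this automatically forces $N$ to be a contraction as well.

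Next, the parametrization in Lemma~\ref{LEM1}(1) applies to $T$ with $D = 0$, and the auxiliary operators produced by \eqref{ff} are exactly $F + NN^* = F'$ and $F - NN^* = F''$. Lemma~\ref{LEM1}(3) with $D = 0$ therefore yields $\Sigma_+(z) = \Sigma_-(z) = \Omega_0(z)$ together with the two key inverse formulas
\[
 (I_\sM - \Omega_0(z))^{-1} = I + zN^*(I - zF')^{-1}N, \quad (I_\sM + \Omega_0(z))^{-1} = I - zN^*(I - zF'')^{-1}N.
\]
The first is precisely \eqref{stuttg1211}, and the first identity of \eqref{lkgjvj} follows from $(I + \Omega_0)(I - \Omega_0)^{-1} = 2(I - \Omega_0)^{-1} - I$. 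Identity \eqref{stuttg121} then follows by taking inverses and using the second formula, since $[(I + \Omega_0)(I - \Omega_0)^{-1}]^{-1} = (I - \Omega_0)(I + \Omega_0)^{-1} = 2(I + \Omega_0)^{-1} - I$. The second identity of \eqref{lkgjvj} is obtained in the same way from $(\Omega_0 - I)(I + \Omega_0)^{-1} = I - 2(I + \Omega_0)^{-1}$.

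Finally, to prove $\Omega_0 \in \cRS(\sM)$, note that since $T$ is a selfadjoint contraction the system $\tau = \{T, \sM, \sM, \sK\}$ is selfadjoint and passive, and by the Schur--Frobenius formula its transfer function is $\Omega_\tau(z) = 0 + zN^*(I - zF)^{-1}N = \Omega_0(z)$. As a transfer function of a passive system $\Omega_0$ belongs to the Schur class on $\dD$; selfadjointness of $T$ gives $\Omega_0(\bar z) = \Omega_0(z)^*$ and the Nevanlinna property on $\cmr$ by a direct resolvent-identity computation; holomorphy on $\dC \setminus \{(-\infty,-1] \cup [1,+\infty)\}$ is clear because $\sigma(F) \subseteq [-1,1]$; and for real $x \in (-1,1)$ the Schur bound $\|\Omega_0(x)\| \leq 1$ combined with the selfadjointness of $\Omega_0(x)$ yields $-I \leq \Omega_0(x) \leq I$, verifying the two conditions of Definition~\ref{rsm}. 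The main obstacle is the verification that $T$ is a contraction via the two Schur complements; once this is in place, both the algebraic identities and the $\cRS$ membership follow from Lemma~\ref{LEM1} and the general theory of selfadjoint passive systems.
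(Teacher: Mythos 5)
Your proof is correct and follows essentially the same route as the paper: both realize $\Omega_0$ as the transfer function of the passive selfadjoint system with block matrix $\begin{bmatrix} 0&N^*\cr N&(F'+F'')/2\end{bmatrix}$ and then read off the inversion identities. You merely make explicit two steps the paper leaves implicit — the Schur-complement verification that this matrix is a contraction, and the derivation of \eqref{stuttg121}, \eqref{stuttg1211} by specializing Lemma~\ref{LEM1} to $D=0$ instead of the paper's ``straightforward calculations.''
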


\begin{proof}
From \eqref{stutt3} it follows that the block operator matrix
\[
\begin{bmatrix} 0&N^* \cr
N&\cfrac{F'+F''}{2}\end{bmatrix}:\begin{array}{l}\sM\\\oplus\\\sK\end{array}\to
\begin{array}{l}\sM\\\oplus\\\sK\end{array}
\]
is a selfadjoint contraction. Therefore, the discrete-time system
\[
\tau_0=\left\{\begin{bmatrix} 0&N^* \cr
N&\cfrac{F'+F''}{2}\end{bmatrix};\sM,\sM, \sK\right\}
\]
is passive selfadjoint. Its transfer function $\Omega_0$ is of the form
\eqref{jvtufj}.
In addition $\Omega_0(0)=0$, therefore, $\|\Omega_0(z)\|\le|z|$ for
$|z|\le 1,$ $z\ne \pm 1$, and thus $\Omega_0\in \cR\cS(\sM)$.
Using \eqref{stutt3}, the identities \eqref{stuttg121} and \eqref{stuttg1211}
can be verified by straightforward calculations.
The equalities in \eqref{lkgjvj} follow from \eqref{stuttg121} and \eqref{stuttg1211}.
\end{proof}

\begin{remark}
Let $D$ be a selfadjoint contraction in $\sM$. Then due to \eqref{stutt3} the block operator matrix
\[
T=\begin{bmatrix} D&D_DN^*\cr ND_D&-NDN^*+ \cfrac{F'+F''}{2}\end{bmatrix}
\]
is a selfadjoint contraction in $\sM\oplus\sK$ and $\tau=\{T;\sM,\sM,\sK\}$ is a passive selfadjoint discrete-time system.
Using Lemma \ref{LEM1}, item 4) (with $F=-NDN^*+(F'+F'')/2$), one obtains for the corresponding transfer function $\Omega_\tau$
the so-called M\"obius representation (cf. \cite{Shmul1})
\[
\Omega_\tau(z)=\Omega(0)+D_{\Omega(0)}\Omega_0(z)\left(I+\Omega(0)\Omega_0(z)\right)^{-1}D_{\Omega(0)},\quad
 z\in\dC\setminus\{(-\infty,-1]\cup[1,\infty)\}.
\]
\end{remark}

\begin{proposition}
\label{stu10}
Let $\sM$ and $\sK$ be Hilbert spaces.

(1) Assume that $F'$ is a selfadjoint contraction in $\sK$ and that $N'\in\bB(\sM,\cK)$.
Then the $\bB(\sM)$-valued function
\begin{equation}\label{cm1}
\cM_{01}(\lambda):=I_\sM+2\,\cfrac{1+\lambda}{1-\lambda}N'^*\left(I_\sK-\cfrac{1+\lambda}{1-\lambda}F'\right)^{-1}N',\quad \lambda\in\dC\setminus\dR_+
\end{equation}
belongs to the Stieltjes  class if and only if
\begin{equation} \label{fin11}
I_\sK+F'\ge 2 N'N'^*.
\end{equation}

(2) Assume that $F''$ is a selfadjoint contraction in $\sK$ and that $N''\in\bB(\sM,\sK)$.
Then the $\bB(\sM)$-valued function
\begin{equation}\label{cm2}
\cM_{02}(\lambda):=-I_\sM+2\,\cfrac{1+\lambda}{1-\lambda}N''^*\left(I_\sK-\cfrac{1+\lambda}{1-\lambda}F''\right)^{-1}N'',\quad \lambda\in\dC\setminus\dR_+
\end{equation}
belongs to the inverse Stieltjes class if and only if
\begin{equation}\label{fin22}
I_\sK-F''\ge 2 N''N''^*.
\end{equation}

(3) Let the selfadjoint contractions $F'$ and $F''$ in $\sK$ and the operator
$N\in\bB(\sM,\sK)$ be connected by the relation
\begin{equation} \label{fin33}
F'-F''=2NN^*.
\end{equation}
Then with $N'=N''=N$ one has $\cM_{01}\in\wt\cS(\sM)$, $\cM_{02}\in\wt\cS^{-1}(\sM)$ and, moreover,
$\cM_{02}(\lambda)=-\cM_{01}(\lambda)^{-1}$ for all $\lambda\in\dC\setminus\dR+$.

In addition, for each $\lambda\in\dC\setminus \dR_+$ there are positive constants $c_{01}(\lambda),$ $c_{02}(\lambda)$
such that the following inequalities hold:
\begin{equation}
\label{stu120}
\left|\left(\cM_{01}(\lambda)f,f\right)\right|\ge c_{01}(\lambda)\|f\|^2,\quad
\left|\left(\cM_{02}(\lambda)f,f\right)\right|\ge c_{02}(\lambda)\|f\|^2\;\;\forall f\in\sM.
\end{equation}
\end{proposition}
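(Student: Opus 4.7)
\emph{Plan.} The strategy is to reinterpret \eqref{cm1} and \eqref{cm2} through the bijection in Lemma~\ref{TH1}. Writing $z=(1+\lambda)/(1-\lambda)$, the right-hand sides of \eqref{cm1} and \eqref{cm2} are precisely the block-operator expressions from \eqref{lkgjvj} in Proposition~\ref{totyjd}, once one identifies a second selfadjoint contraction (call it $F''$ in (1), or $F'$ in (2)) satisfying $F'-F''=2NN^*$, with $N$ equal to $N'$ or $N''$ respectively.

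For the ``if'' direction of (1) I would set $F'':=F'-2N'N'^*$. This is automatically selfadjoint and satisfies $F''\le F'\le I$, while $F''\ge -I$ rewrites exactly as \eqref{fin11}. Hence $F''$ is a selfadjoint contraction with $F'-F''=2N'N'^*$, and Proposition~\ref{totyjd} produces $\Omega_0\in\cRS(\sM)$ with
\[
 \cM_{01}(\lambda)=(I+\Omega_0(z))(I-\Omega_0(z))^{-1}=-I+2(I-\Omega_0(z))^{-1};
\]
Lemma~\ref{TH1} then gives $\cM_{01}\in\wt\cS(\sM)$. For the converse, the Stieltjes property $\cM_{01}(\lambda)\ge 0$ for $\lambda<0$, written as a form inequality in $z\in(-1,1)$ and passed to the limit $z\to -1^+$, yields $I\ge 2N'^*(I+F')^{-1}N'$, which a short spectral manipulation rewrites as \eqref{fin11}. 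Part (2) is analogous: starting from \eqref{fin22}, define $F':=F''+2N''N''^*$ (so that $F'\le I$ is precisely \eqref{fin22}), apply the second identity in \eqref{lkgjvj} together with \eqref{formula2}, and prove the converse via $\cM_{02}(\lambda)\le 0$ on $(-\infty,0)$ and the opposite limit $\lambda\to 0^-$, i.e.\ $z\to 1^-$.

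For (3) the relations $I+F'=(I+F'')+2NN^*\ge 2NN^*$ and $I-F''=(I-F')+2NN^*\ge 2NN^*$ give \eqref{fin11} and \eqref{fin22} for free, so parts (1) and (2) immediately yield $\cM_{01}\in\wt\cS(\sM)$ and $\cM_{02}\in\wt\cS^{(-1)}(\sM)$; the identity $\cM_{02}(\lambda)=-\cM_{01}(\lambda)^{-1}$ is then precisely \eqref{stuttg121} after substitution of $z$. For the coercivity bounds \eqref{stu120}, note that this identity displays $\cM_{01}(\lambda)$ as a bounded linear bijection on $\sM$ with bounded inverse $-\cM_{02}(\lambda)$. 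By Theorem~\ref{cbvgfn} one can choose a rotation $e^{i\varphi_\lambda}$ making $A_\lambda:=e^{i\varphi_\lambda}\cM_{01}(\lambda)$ $m$-sectorial with vertex $0$ and semi-angle $\alpha_\lambda<\pi/2$. If $(A_\lambda f_n,f_n)\to 0$ along a unit sequence, sectoriality forces $(\RE A_\lambda)^{1/2}f_n\to 0$; the Kato form factorization $\IM A_\lambda=(\RE A_\lambda)^{1/2}K_\lambda(\RE A_\lambda)^{1/2}$ with $\|K_\lambda\|\le\tan\alpha_\lambda$ then yields $A_\lambda f_n\to 0$, contradicting $\|A_\lambda^{-1}\|<\infty$. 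Hence $c_{01}(\lambda):=\inf_{\|f\|=1}|(\cM_{01}(\lambda)f,f)|>0$, and the identical argument using Theorem~\ref{cbvgfn2} provides $c_{02}(\lambda)>0$.

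\emph{Main obstacle.} The most delicate step is the converse direction of (1) and (2): if $I+F'$ or $I-F''$ is not boundedly invertible, the limit formulas must be understood in the sesquilinear-form sense, and a monotone limiting argument (justified by uniform boundedness of the Stieltjes/inverse Stieltjes family on $(-\infty,0)$) is needed to translate the pointwise operator inequality on $\dR_-$ into the operator inequality $2N'N'^*\le I+F'$ (resp.\ $2N''N''^*\le I-F''$) on $\sK$. Everything else follows cleanly from Proposition~\ref{totyjd}, Lemma~\ref{TH1}, and Theorems~\ref{cbvgfn}--\ref{cbvgfn2}.
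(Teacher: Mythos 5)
Your proposal is correct and follows essentially the same route as the paper: sufficiency by completing $F'$ (resp.\ $F''$) to a pair satisfying \eqref{fin33} and invoking Proposition~\ref{totyjd} with Lemma~\ref{TH1}, necessity by monotonicity of the Nevanlinna function on $\dR_-$ and the form limit as $z\to\mp1$ (the paper handles the non-invertible case of $I_\sK+F'$ via the Moore--Penrose pseudoinverse and the Kre\u{\i}n--Ovcharenko resolvent limit formula, exactly the point you flag as the main obstacle), and \eqref{stu120} from sectoriality plus bounded invertibility of $\cM_{01}(\lambda)=-\cM_{02}(\lambda)^{-1}$.
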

\begin{proof}
First observe that if a selfadjoint contraction $F'$ (respect., $F''$) in $\sK$ and
a contraction $N\in\bB(\sM,\sK)$ satisfy the inequality $I_\sK+F'\ge 2 NN^*$ (respect., $I_\sK-F''\ge 2NN^*$),
then the contraction $F'':=F'-2NN^*$ (respect., $F'=F''+2NN^*$) satisfies $I_\sK-F''\ge 2NN^*$
(respect., $I_\sK+F'\ge 2 NN^*$). Therefore, it is sufficient to prove the statement (3).

Suppose that \eqref{fin33} is valid. Then by Proposition \ref{totyjd} the function $\Omega_0$ given by \eqref{jvtufj}
belongs to the class $\cRS(\sM).$ Using \eqref{lkgjvj} it is seen that
\begin{equation}\label{momeg}
 \begin{array}{l}
\cM_{01}(\lambda)=\left(I_\sM+\Omega_0\left(\frac{1+\lambda}{1-\lambda}\right)\right)
    \left(I_\sM-\Omega_0\left(\frac{1+\lambda}{1-\lambda}\right)\right)^{-1},\\
\cM_{02}(\lambda)=\left(\Omega_0\left(\frac{1+\lambda}{1-\lambda}\right)-I_\sM\right)
    \left(I_\sM+\Omega_0\left(\frac{1+\lambda}{1-\lambda}\right)\right)^{-1},\quad
 \lambda\in\dC\setminus\dR_+.
 \end{array}
\end{equation}
Since $\Omega_0\in\cRS(\sM)$, from Lemma \ref{TH1} one concludes that $\cM_{01}$ is from the Stieltjes class and
$\cM_{02}$ is from the inverse Stieltjes class.

Conversely, the operator-valued function $\cM_{01}$ is holomorphic on $\dC\setminus\dR_+$ and it is a Nevanlinna function.
Hence it is non-decreasing on $(-\infty,0)$. Using the following well-known relation for nonnegative selfadjoint operator
$\cB$ in a Hilbert space $\sH$, cf. \cite{KrO},
\[
\lim\limits_{y\uparrow 0}\,\left((\cB-yI)^{-1}g,g\right)=\left\{
\begin{array}{ll}
    \|\cB^{[-\half]}g\|^2, & g\in\ran \cB^{\half},\\
    +\infty,           & g\in \sH\setminus\ran \cB^{\half},
\end{array}\right.,
\]
it is seen that $\lim\limits_{x\uparrow -\infty}\left(\cM_{01}(x)f,f\right)$
exists for all $f\in\sM$ if and only if $\ran N\subseteq\ran (F+I_\sK)^{\half}$.
If this is the case, then
\[
\lim\limits_{x\downarrow
-\infty}\left(\cM_{01}(x)f,f\right)=||f||^2-2\|(F'+I_\sK)^{[-\half]}N'f\|^2\;\;\forall
f\in\sM.
\]
Hence,
\[
\begin{array}{ll}
    &\lim\limits_{x\downarrow -\infty}\left(\cM_{01}(x)f,f\right)\ge 0 \quad\mbox{for all}\quad f\in\sM\setminus\{0\}\\
\iff & 2\|(F'+I_\sK)^{[-\half]}N'f\|^2\le ||f||^2 \quad\mbox{for all}\quad f\in\sM\setminus\{0\}\\
\iff & F'+I_\sK\ge 2 N'N^{'*}.
\end{array}
\]

For each $\lambda\in\dC\setminus\dR_+$ one has
$\cM_{01}(\lambda),\;\cM_{01}^{-1}(\lambda)\in\bB(\sM)$. If
$\RE\lambda<0$, then by Theorem \ref{cbvgfn} the operator
$\cM_{01}(\lambda)$ is bounded sectorial and
has bounded inverse. It follows that the operators
$\RE\cM_{01}(\lambda)$, $-\RE\cM_{02}(\lambda)$ are positive
definite. Hence for all $f\in\sM$
\[
\begin{array}{c}
\left|\left(\cM_{01}(\lambda)f,f\right)\right|\ge b_1(\lambda),\;\RE
\left(\cM_{01}(\lambda)f,f\right)\ge c_{01}(\lambda)||f||^2,\\[3mm]
\left|\left(\cM_{02}(\lambda)f,f\right)\right|\ge b_2(\lambda),\;\left|\RE
\left(\cM_{02}(\lambda)f,f\right)\right|\ge c_{02}(\lambda)||f||^2.
\end{array}
\]
Thus, \eqref{stu120} is valid for $\RE\lambda<0$.

If $\RE\lambda\ge 0$, $\lambda\notin\dR_+$, then there is $\f\in \dR$ such that
$\exp(i\f)\cM_0(\lambda)$ is $m$-sectorial and $m$-accretive. Therefore, one can use the same arguments
and thus the inequalities in \eqref{stu120} hold for such $\lambda$, too.
\end{proof}

Observe that if \eqref{fin11} and \eqref{fin22} are valid, then it follows from Lemma \ref{TH1}, the representations \eqref{formula1}, \eqref{formula2}, \eqref{momeg}, and the property \eqref{ythfd22} of the functions from the class $\cRS(\sM)$ that the operator-valued functions of two variables
\[
\cK_{01}(\lambda,\mu)=\left\{\begin{array}{l}
\cM_{01}(\lambda)+\cM_{01}(\bar\mu)+\cfrac{\lambda+\bar\mu}{\lambda-\bar\mu}\Biggl(\cM_{01}(\lambda)-\cM_{01}(\bar\mu)\Biggr),
\;\lambda,\mu\in\dC\setminus\dR_+,\;
\mu\ne\bar\lambda\\[3mm]
2\cM_{01}(\lambda)+2\cfrac{d\cM_{01}(\lambda)}{d\lambda},\;\mu=\bar\lambda,\,\lambda\in\dC\setminus\dR_+
\end{array}\right.
\]
and
\[
\cK_{02}(\lambda,\mu)=\left\{\begin{array}{l}
\cfrac{\lambda+\bar\mu}{\lambda-\bar\mu}
 \Biggl(\cM_{02}(\lambda)-\cM_{02}(\bar\mu)\Biggr)-\cM_{02}(\lambda)-\cM_{02}(\bar\mu),\;\lambda,\mu\in\dC\setminus\dR_+,\,
\mu\ne\bar\lambda\\[3mm]
2\lambda\cfrac{d\cM_{02}(\lambda)}{d\lambda}-2\cM_{02}(\lambda),\;\mu=\bar\lambda,\;\lambda\in\dC\setminus\dR_+
\end{array}\right.
\]
are nonnegative kernels on the domain $\dC\setminus\dR_+$; cf. Theorems~\ref{cbvgfn},~\ref{cbvgfn2}.

Using the Cayley transforms $A'=\cC(F')$, $A''=\cC(F'')$, the expressions \eqref{cm1} and \eqref{cm2} for the Nevanlinna families $\cM_{01}$ and $\cM_{02}$ can be rewritten as follows
\[
\begin{array}{l}
\cM_{01}(\lambda)=I_{\sM} +(1+\lambda)N'^*\Biggl(I+(1+\lambda)(A'-\lambda I)^{-1}\Biggr)N',\\
\cM_{02}(\lambda)=-I_{\sM} +(1+\lambda)N''^*\Biggl(I+(1+\lambda)( A''-\lambda I)^{-1}\Biggr)N''.
\end{array}
\]
In terms of $A'$ and $A''$ inequalities \eqref{fin11} and \eqref{fin22} take the form
$(I+A')^{-1}\ge N'N'^*
$
 and
\[
I- (I+ A'')^{-1}\ge N''N''^*\Longleftrightarrow (I+ (A'')^{-1})^{-1}\ge N''N''^*,
\]
respectively.
In particular, if $N'=N''=N$ and the equality
\[
(A'+I)^{-1}-( A''+I)^{-1}=NN^*
\]
holds, then
\[
\cM_{02}(\lambda)=-\cM_{01}(\lambda)^{-1}\;\forall \lambda\in\dC\setminus\dR_+.
\]
Observe that one can easily prove that the inequality $(I+ A')^{-1}\ge N'N'^*$ is equivalent to the conditions
$$\ran N'\subset\dom (A')^{\half},\; || (A')^{\half}N'f||^2+ ||N'f||^2\le ||f||^2 \;\;\forall f\in\sK$$
while the inequality $(I+ (A'')^{-1})^{-1}\ge N''N''^*$ is equivalent to
$$\ran N''\subset\ran (A'')^{\half},\;||(A'')^{-\half}N''f||^2+ ||N''f||^2\le ||f||^2\;\; \forall f\in\sK.$$

\begin{proposition}
Let $\sM$ and $\sK$ be Hilbert spaces.
Assume that $\wh A$ is a nonnegative selfadjoint relation in $\sK$ and $V\in\bB(\sM,\sK)$ is a contraction.
Moreover, let $F=\cC(\wh A)=-I_\sK+2(I_\sK+\wh A)^{-1}$  be the Cayley transform of $\wh A$.
Then the $\bB(\sM)$-valued function
\begin{equation}\label{dfcf1}
 \begin{array}{l}
\stackrel {0}{\cQ}_{\wh A,V}(\lambda):= I_\sM+(1+\lambda)V^*(\wh A-\lambda I_\sK)^{-1}V\\[3mm]
\qquad\qquad  =  D^2_{V}+\cfrac{2}{1-\lambda}V^*\left(I_\sK-\cfrac{1+\lambda}{1-\lambda}\,F\right)^{-1}V,\;\lambda\in\dC\setminus\dR_+,
\end{array}
\end{equation}
belongs to the Stieltjes class $\wt \cS(\sM)$. Therefore, the operator-valued function
\begin{equation}\label{dfcf2}
\begin{array}{l}
 \stackrel{0}{\cR}_{\wh A, V}(\lambda):= -\stackrel {0}{\cQ}_{\wh A,V}(1/\lambda)=-I_\sM-(1+\lambda)V^*(\lambda\wh A-I_\sK)^{-1}V\\[3mm]
\qquad\qquad =-D^2_{V}+\cfrac{2\lambda}{1-\lambda}V^*\left(I_\sK+\cfrac{1+\lambda}{1-\lambda}\,F\right)^{-1}V,\;\lambda\in\dC\setminus\dR_+
\end{array}
\end{equation}
belongs to the inverse Stieltjes class $\wt\cS^{-1}(\sM)$. In addition, one has
$\stackrel{0}{\cQ}_{\wh A,V}(-1)=I_\sM$, $\stackrel{0}{\cR}_{\wh A, V}(-1)=-I_\sM$.

The inverse Stieltjes class function $-(\stackrel {0}{\cQ}_{\wh A,V}(\lambda))^{-1}$ takes the form
\begin{equation} \label{jhhfn1}
\begin{array}{l}
-(\stackrel {0}{\cQ}_{\wh A,V}(\lambda))^{-1}=-I_\sM+V^*\left(\cfrac{2}{1+\lambda}\,I_\sK-(I_\sK+F)D^2_{V^*}\right)^{-1}(I_\sK+F)V \\
\qquad =-I_\sM+(1+\lambda)V^*\left(\wh A-\lambda I_\sK+(1+\lambda)VV^*\right)^{-1}V,\quad
 \lambda\in\dC\setminus\dR_+.
\end{array}
\end{equation}
\end{proposition}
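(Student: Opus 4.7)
My plan is to use Proposition \ref{stu10}(1) to establish the Stieltjes property of $\stackrel{0}{\cQ}_{\wh A, V}$, and then to derive the remaining statements via the class symmetries of Stieltjes/inverse Stieltjes families and a short algebraic verification.

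First I would verify that the two expressions for $\stackrel{0}{\cQ}_{\wh A,V}(\lambda)$ in \eqref{dfcf1} coincide. Setting $z=(1+\lambda)/(1-\lambda)$, the Cayley resolvent identity \eqref{connectresab} applied to $A=\wh A$ and $T=F$ gives
\[
 (1+\lambda)(\wh A-\lambda I_\sK)^{-1}=-I_\sK+\frac{2}{1-\lambda}(I_\sK-zF)^{-1},
\]
and sandwiching between $V^*$ and $V$ together with $I_\sM-V^*V=D_V^2$ produces the second form. The two expressions for $\stackrel{0}{\cR}_{\wh A,V}(\lambda)=-\stackrel{0}{\cQ}_{\wh A,V}(1/\lambda)$ in \eqref{dfcf2} follow from $(\wh A-\lambda^{-1}I_\sK)^{-1}=\lambda(\lambda\wh A-I_\sK)^{-1}$ and the same Cayley substitution. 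The specialisation $\lambda=-1$ gives $\stackrel{0}{\cQ}_{\wh A,V}(-1)=I_\sM$ and $\stackrel{0}{\cR}_{\wh A,V}(-1)=-I_\sM$.

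Next, to put $\stackrel{0}{\cQ}_{\wh A,V}$ into the framework of Proposition \ref{stu10}(1), I would use the elementary identity $z(I_\sK+F)-\frac{2}{1-\lambda}I_\sK=-(I_\sK-zF)$ (verified by checking that $z-\frac{2}{1-\lambda}=-1$) to rewrite
\[
 \stackrel{0}{\cQ}_{\wh A,V}(\lambda)=I_\sM+zV^*(I_\sK-zF)^{-1}(I_\sK+F)V.
\]
Since $I_\sK+F=2(I_\sK+\wh A)^{-1}\geq 0$, setting $F':=F$ and $N':=2^{-1/2}(I_\sK+F)^{1/2}V\in\bB(\sM,\sK)$ casts this into the form \eqref{cm1}, namely $\stackrel{0}{\cQ}_{\wh A,V}(\lambda)=I_\sM+2zN'^{*}(I_\sK-zF')^{-1}N'$. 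The required inequality \eqref{fin11} reduces to
\[
 I_\sK+F'-2N'N'^{*}=(I_\sK+F)^{1/2}(I_\sK-VV^*)(I_\sK+F)^{1/2}\geq 0,
\]
which holds because $V$ is a contraction. Proposition \ref{stu10}(1) then gives $\stackrel{0}{\cQ}_{\wh A,V}\in\wt\cS(\sM)$, and the equivalence (i)$\Leftrightarrow$(ii) listed after Definition \ref{invStieltjes} immediately yields $\stackrel{0}{\cR}_{\wh A,V}\in\wt\cS^{(-1)}(\sM)$.

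Finally, for the formulas in \eqref{jhhfn1} I would verify the resolvent (second) form directly. Writing $c=1+\lambda$ and $a=\wh A-\lambda I_\sK$, a straightforward expansion together with the resolvent identity $(a+cVV^*)^{-1}-a^{-1}=-(a+cVV^*)^{-1}(cVV^*)a^{-1}$ gives
\[
 [I_\sM+cV^*a^{-1}V]\,[-I_\sM+cV^*(a+cVV^*)^{-1}V]=-I_\sM,
\]
the two nontrivial cross-terms cancelling. The equivalence with the first (bounded-operator) form is then obtained by substituting $I_\sK+F=2(I_\sK+\wh A)^{-1}$ and the Cayley resolvent formula from the first step. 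The main technical point is handling the case where $\wh A$ has nontrivial multivalued part: there $I_\sK+\wh A$ is only a linear relation, so all manipulations must be routed through the bounded operator $R:=(I_\sK+\wh A)^{-1}=(I_\sK+F)/2$, and the meaning of $(a+cVV^*)^{-1}$ as a linear relation is justified by performing the computation on the Cayley side, where only bounded operators appear.
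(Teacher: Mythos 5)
Your proposal is correct and takes essentially the same route as the paper: both reduce the Stieltjes property to Proposition \ref{stu10}(1) via the same choice $N'=2^{-1/2}(I_\sK+F)^{1/2}V$, verify \eqref{fin11} from $I_\sK-VV^*\geq 0$, and obtain $\stackrel{0}{\cR}_{\wh A,V}\in\wt\cS^{(-1)}(\sM)$ from the transform $\lambda\mapsto 1/\lambda$. The only (minor) variation is in \eqref{jhhfn1}, where you verify the inverse directly by a resolvent-identity cancellation instead of invoking \eqref{stuttg121} with $G=F-2NN^*$; your remark that the manipulations involving $(\wh A-\lambda I_\sK+(1+\lambda)VV^*)^{-1}$ must be justified on the bounded Cayley-transform side is precisely how the paper handles it.
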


\begin{proof} 
Define
\[
 N=\cfrac{1}{\sqrt{2}}(I_\sK+F)^{\half} V=(I_\sK+\wh A)^{-\half}V.
\]
Since $V$ is a contraction one has
\[
I_\sK+F\ge 2 NN^*\Longleftrightarrow (I_\sK+\wh A)^{-1}\ge NN^*.
\]
Hence by Proposition \ref{stu10} the operator-valued function
\[
 \cM_{0,1}(\lambda)=I_\sM+2\,\cfrac{1+\lambda}{1-\lambda}N^*\left(I_\sK-\cfrac{1+\lambda}{1-\lambda}F\right)^{-1}N,
 \quad \lambda\in\dC\setminus\dR_+,
\]
belongs to Stieltjes class $\wt\cS(\sM)$. By substituting the formula for $N$ one obtains
\begin{equation}\label{Q0}
\begin{array}{l}
 \cM_{0,1}(\lambda)
 =I_\sM+\,\cfrac{1+\lambda}{1-\lambda}V^*\left(I_\sK-\cfrac{1+\lambda}{1-\lambda}F\right)^{-1}(I+F)V\\
 \qquad=D^2_{V}+\cfrac{2}{1-\lambda}V^*\left(I_\sK-\cfrac{1+\lambda}{1-\lambda}\,F\right)^{-1}V\\
 \qquad=\; \stackrel{0}{\cQ}_{\wh A,V}(\lambda), \quad \lambda\in\dC\setminus\dR_+.
 \end{array}
\end{equation}
On the other hand, \eqref{connectresab} shows that
\[
 (\wh A-\lambda I_\sK)^{-1}=\cfrac{1}{1-\lambda}\left(I_\sK-\cfrac{1+\lambda}{1-\lambda}F\right)^{-1}(I_\sK+F),\quad \lambda\in\rho(\wh A),
\]
which yields the first formula in \eqref{dfcf1}:
\[
\stackrel{0}{\cQ}_{\wh A,V}(\lambda)=I_\sM+(1+\lambda)V^*(\wh A-\lambda I_\sK)^{-1}V,\quad \lambda\in\dC\setminus\dR_+.
\]

The formulas in \eqref{dfcf2} are obtained directly from \eqref{dfcf1}.

It remains to prove \eqref{jhhfn1}. To see this introduce the operator
$$G:=F-2NN^*=F-(I_\sK+F)^{\half}VV^*(I_\sK+F)^{\half}=(I_\sK+F)^{\half}D^2_{V^*}(I_\sK+F)^{\half}-I_\sK.$$
Then $G$ is a selfadjoint contraction.
Taking into account that the function $z=\frac{1+\lambda}{1-\lambda}$ maps the domain $\dC\setminus\dR_+$ onto the domain $\dC\setminus\{(-\infty,-1]\cup [1,\infty)\}$
an application of the equality \eqref{stuttg121} with a straightforward calculation leads to
\[
\begin{array}{l}
-(\stackrel{0}{\cQ}_{\wh A,V}(\lambda))^{-1}+I_\sM
 =2\,\frac{1+\lambda}{1-\lambda}N^*\left(I_\sK-\frac{1+\lambda}{1-\lambda}G\right)^{-1}N\\
=\frac{1+\lambda}{1-\lambda}V^*(I_\sK+F)^{\half}\left(I_\sK-\frac{1+\lambda}{1-\lambda}
    \left((I_\sK+F)^{\half}D^2_{V^*}(I_\sK+F)^{\half}-I_\sK\right)\right)^{-1}(I_\sK+F)^{\half}V\\
=\frac{1+\lambda}{1-\lambda}V^*(I_\sK+F)^{\half}
    \left(\frac{2}{1-\lambda}I_\sK-\frac{1+\lambda}{1-\lambda}(I_\sK+F)^{\half}D^2_{V^*}(I_\sK+F)^{\half}\right)^{-1}(I_\sK+F)^{\half}V\\
=\frac{1+\lambda}{1-\lambda}V^*\left(\frac{2}{1-\lambda}I_\sK-\frac{1+\lambda}{1-\lambda}(I_\sK+F)D^2_{V^*}\right)^{-1}(I_\sK+F)V\\
=\frac{1+\lambda}{2}V^*\left(I_\sK-\frac{1+\lambda}{2}(I_\sK+F)D^2_{V^*}\right)^{-1}(I_\sK+F)V\\
=(1+\lambda)V^*\left(I_\sK-(1+\lambda)(I_\sK+\wh A)^{-1}D^2_{V^*}\right)^{-1}(I_\sK+\wh A)^{-1}V\\
=(1+\lambda)V^*\left(\wh A-\lambda I_\sK+(1+\lambda){VV^*}\right)^{-1}V,\;\lambda\in\dC\setminus\dR_+,
\end{array}
\]
where we have used the identity $P(I-QP)^{-1}=(I-PQ)^{-1}P$ which holds whenever one of the inverses exists.
\end{proof}

\begin{corollary}\label{inverse11}
Let the functions $\stackrel{0}{\cQ}_{\wh A,V}$ and $\stackrel{0}{\cR}_{\wh A,V}$ be given by \eqref{dfcf1} and \eqref{dfcf2}, respectively.
Then for each $\lambda\in\dC\setminus \dR_+$ there are positive numbers $c_0(\lambda)$ and $d_0(\lambda)$ such that \begin{equation}
\label{stu1222}
\left|\left(\stackrel{0}{\cQ}_{\wh A,V}(\lambda)f,f\right)\right|\ge c_0(\lambda)\|f\|^2,\quad
\left|\left(\stackrel{0}{\cR}_{\wh A,V}(\lambda)f,f\right)\right|\ge d_0(\lambda)\|f\|^2\;\;\forall f\in\sM.
\end{equation}
\end{corollary}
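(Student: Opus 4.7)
The plan is to deduce Corollary \ref{inverse11} by invoking the inequality \eqref{stu120} of Proposition \ref{stu10}(3) applied to $\stackrel{0}{\cQ}_{\wh A,V}$, and then transferring the resulting bound to $\stackrel{0}{\cR}_{\wh A,V}$ via the symmetry $\stackrel{0}{\cR}_{\wh A,V}(\lambda) = -\stackrel{0}{\cQ}_{\wh A,V}(1/\lambda)$ already recorded in \eqref{dfcf2}.

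First I would recall from \eqref{Q0} the identification $\stackrel{0}{\cQ}_{\wh A,V} = \cM_{01}$ with the data $F' := \cC(\wh A) = -I_\sK + 2(I_\sK+\wh A)^{-1}$ and $N' := N := (I_\sK+\wh A)^{-\half}V$. To invoke part (3) of Proposition \ref{stu10}, I must exhibit a selfadjoint contraction $F''$ in $\sK$ with $F' - F'' = 2NN^*$; but the preceding proposition has already shown that $F'' := F' - 2NN^* = (I_\sK+F')^{\half}D^2_{V^*}(I_\sK+F')^{\half}-I_\sK$ has this property (which ultimately reflects contractivity of $V$, so that $D_{V^*}$ is a selfadjoint contraction). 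Equivalently, the hypothesis \eqref{fin11} reduces, after substituting $g := (I_\sK+\wh A)^{-\half}f$, to $\|g\|^2 \ge \|V^*g\|^2$ for $g\in\sK$, which holds since $V^*$ is a contraction. With this setup, \eqref{stu120} produces a positive constant $c_{01}(\lambda)$ for which $|(\stackrel{0}{\cQ}_{\wh A,V}(\lambda)f,f)| \ge c_{01}(\lambda)\|f\|^2$ for all $f\in\sM$, giving the first bound in \eqref{stu1222} with $c_0(\lambda) := c_{01}(\lambda)$.

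For the second inequality I would use the fact that $\dC\setminus\dR_+ = \dC\setminus[0,\infty)$ is stable under the involution $\lambda \mapsto 1/\lambda$, so that $1/\lambda$ again lies in the domain whenever $\lambda$ does. The identity \eqref{dfcf2} then yields
$$|(\stackrel{0}{\cR}_{\wh A,V}(\lambda)f,f)| = |(\stackrel{0}{\cQ}_{\wh A,V}(1/\lambda)f,f)| \ge c_0(1/\lambda)\|f\|^2,\quad f\in\sM,$$
and one takes $d_0(\lambda) := c_0(1/\lambda) > 0$. Since the entire argument is a direct appeal to statements already proved in the excerpt, I do not anticipate any substantive obstacle; the only place where the hypotheses on $\wh A$ and $V$ enter is through the verification that $F'' = F' - 2NN^*$ is a selfadjoint contraction, which was already carried out in the construction of $G$ in the preceding proof.
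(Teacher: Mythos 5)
Your proposal is correct and follows exactly the route the paper intends: the corollary is an immediate consequence of the bound \eqref{stu120} in Proposition \ref{stu10}(3), applied via the identification $\stackrel{0}{\cQ}_{\wh A,V}=\cM_{01}$ from \eqref{Q0} (with $N=(I_\sK+\wh A)^{-\half}V$ and $F''=F'-2NN^*$ a selfadjoint contraction because $V$ is a contraction), together with the relation $\stackrel{0}{\cR}_{\wh A,V}(\lambda)=-\stackrel{0}{\cQ}_{\wh A,V}(1/\lambda)$ and the invariance of $\dC\setminus\dR_+$ under $\lambda\mapsto 1/\lambda$, which gives $d_0(\lambda)=c_0(1/\lambda)$. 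No gaps.
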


\begin{corollary}\label{etwasneu}
Let $\wh A$ be a nonnegative selfadjoint relation in $\sK$ and let $V\in\bB(\sM,\sK)$ be a contraction.
Let $\wh A_o$ be the operator part of $\wh A$ and let $\wh P_o$ be the orthogonal projection onto the subspace
$\wh\sK_o=\sK\ominus\mul\wh A\,(=\cdom \wh A_o)$. Denote by $\stackrel{0}{P_{o}}$ the orthogonal
projection in $\sK$ onto $\cran \wh A_0$.
Then the $\bB(\sM)$-valued Stieltjes class function $\stackrel{0}{\cQ}_{\wh A,V}$ \eqref{dfcf1} takes the form
\begin{equation}\label{etwasneu2}
\stackrel{0}{\cQ}_{\wh A,V}(\lambda) 
 =I_{\sM}-V^*\wh P_oV+V^*\left((I+\wh A_o)(\wh A_o-\lambda I_{\wh\sK_o})^{-1}\wh P_o\right)V,\;\lambda\in\dC\setminus\dR_+.
\end{equation}
The $\bB(\sM)$-valued inverse Stieltjes function
\begin{equation}\label{etwasneu3}
\stackrel{0}{\cR}_{\wh A^{-1},V} (\lambda)=-I_\sM-(1+\lambda)V^*(\lambda\wh A^{-1}-I_\sK)^{-1}V,\;\lambda\in\dC\setminus\dR_+
\end{equation}
can be rewritten as follows
\begin{multline}\label{etwasneu4}
\stackrel{0}{\cR}_{\wh A^{-1},V}=-I_{\sM}+V^*P^\perp_o V+\\
\lambda V^*P^\perp_o V+V^*\wh A_o(I_{\wh\sK_o}+\wh A_o)\left((\wh A_o-\lambda I_{\wh\sK_o})^{-1}-(I_{\wh\sK_o}+\wh A_o)^{-1}\right)\stackrel{0}{P_{o}} V.
\end{multline}
\end{corollary}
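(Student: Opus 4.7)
The plan is to prove \eqref{etwasneu2} directly from \eqref{dfcf1}, and then obtain \eqref{etwasneu4} by applying \eqref{etwasneu2} to the nonnegative selfadjoint l.r.\ $\wh A^{-1}$ combined with $\stackrel{0}{\cR}_{\wh A^{-1},V}(\lambda)=-\stackrel{0}{\cQ}_{\wh A^{-1},V}(1/\lambda)$, which is \eqref{dfcf2} applied to $\wh A^{-1}$.

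For \eqref{etwasneu2}, the decomposition $\wh A=\wh A_o\oplus(\{0\}\times\mul\wh A)$ together with $\dom\wh A=\dom\wh A_o\subset\wh\sK_o$ and the definition $(\wh A-\lambda I_\sK)^{-1}=\{\{f'-\lambda f,f\}:\{f,f'\}\in\wh A\}$ yields, for $\lambda\in\rho(\wh A)$, the resolvent formula
\[
 (\wh A-\lambda I_\sK)^{-1}=(\wh A_o-\lambda I_{\wh\sK_o})^{-1}\wh P_o.
\]
Substituting into \eqref{dfcf1} and applying the partial fractions identity
\[
 (1+\lambda)(\wh A_o-\lambda I_{\wh\sK_o})^{-1}=(I_{\wh\sK_o}+\wh A_o)(\wh A_o-\lambda I_{\wh\sK_o})^{-1}-I_{\wh\sK_o},
\]
which is immediate from $I_{\wh\sK_o}+\wh A_o=(\wh A_o-\lambda I_{\wh\sK_o})+(1+\lambda)I_{\wh\sK_o}$, together with $V^*\wh P_o\wh P_oV=V^*\wh P_oV$, produces \eqref{etwasneu2}.

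For \eqref{etwasneu4}, the first step is to unpack the structure of $\wh A^{-1}$: a short computation with the defining pairs shows that $\mul\wh A^{-1}=\ker\wh A_o$, so the projection $\wh P'_o$ onto $\sK\ominus\mul\wh A^{-1}$ decomposes as $\wh P'_o=P^\perp_o+\stackrel{0}{P_o}$, where $P^\perp_o:=I_\sK-\wh P_o$ projects onto $\mul\wh A$; moreover the operator part $(\wh A^{-1})_o$ acts as $0$ on $\mul\wh A$ and as $B_o^{-1}$ on $\cran\wh A_o$, with $B_o:=\wh A_o\uphar\cran\wh A_o$ injective and selfadjoint on $\cran\wh A_o$. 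Applying \eqref{etwasneu2} to $\wh A^{-1}$ at the spectral parameter $1/\lambda$ and evaluating the middle factor piecewise---on $\mul\wh A$ it reduces to $-\lambda I$, while on $\cran\wh A_o$ the identities $(B_o^{-1}-\tfrac{1}{\lambda}I)^{-1}=-\lambda B_o(B_o-\lambda I)^{-1}$ and $(I+B_o)(B_o-\lambda I)^{-1}=I+(1+\lambda)(B_o-\lambda I)^{-1}$ give $-\lambda I-\lambda(1+\lambda)(\wh A_o-\lambda I_{\wh\sK_o})^{-1}$---and then negating via $\stackrel{0}{\cR}_{\wh A^{-1},V}(\lambda)=-\stackrel{0}{\cQ}_{\wh A^{-1},V}(1/\lambda)$ leads to the intermediate identity
\[
 \stackrel{0}{\cR}_{\wh A^{-1},V}(\lambda)=-I_\sM+(1+\lambda)V^*\wh P'_oV+\lambda(1+\lambda)V^*(\wh A_o-\lambda I_{\wh\sK_o})^{-1}\stackrel{0}{P_o}V.
\]
The final rearrangement into \eqref{etwasneu4} uses the splitting $\wh P'_o=P^\perp_o+\stackrel{0}{P_o}$ together with the resolvent identity
\[
 \wh A_o(I_{\wh\sK_o}+\wh A_o)\bigl[(\wh A_o-\lambda I_{\wh\sK_o})^{-1}-(I_{\wh\sK_o}+\wh A_o)^{-1}\bigr]=(1+\lambda)I_{\wh\sK_o}+\lambda(1+\lambda)(\wh A_o-\lambda I_{\wh\sK_o})^{-1},
\]
which itself follows from the two elementary identities above applied with $\wh A_o$ in place of $B_o$.

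\textbf{Main obstacle.} The delicate part is the bookkeeping of the three orthogonal summands $\mul\wh A$, $\ker\wh A_o$, $\cran\wh A_o$ and of the three associated projections $\wh P_o$, $\wh P'_o$, $\stackrel{0}{P_o}$: one must correctly identify $\mul\wh A^{-1}=\ker\wh A_o$ (not $\mul\wh A$), describe $(\wh A^{-1})_o$ on each of the summands, and recognize that the $\stackrel{0}{P_o}$-piece of $(1+\lambda)V^*\wh P'_oV$ must be combined with the resolvent term by means of the $\wh\sK_o$-identity above; only after this is carried out does one recover the symmetric form of \eqref{etwasneu4}.
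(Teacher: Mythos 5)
Your proof is correct. For \eqref{etwasneu2} you follow essentially the same computation as the paper: reduce $(\wh A-\lambda I_\sK)^{-1}$ to $(\wh A_o-\lambda I_{\wh\sK_o})^{-1}\wh P_o$ and absorb the term $(1+\lambda)(\wh A_o-\lambda I_{\wh\sK_o})^{-1}\wh P_o$ via the partial-fraction identity into $(I+\wh A_o)(\wh A_o-\lambda I_{\wh\sK_o})^{-1}\wh P_o-\wh P_o$.

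For \eqref{etwasneu4}, however, you take a genuinely different route. The paper never analyzes the internal structure of $\wh A^{-1}$ at all: it invokes the one-line resolvent identity $(I_\sK-\lambda\wh A^{-1})^{-1}=I_\sK+\lambda(\wh A-\lambda I_\sK)^{-1}$, decomposes the right-hand side as $P_o^{\perp}+\wh A_o(\wh A_o-\lambda I_{\wh\sK_o})^{-1}\stackrel{0}{P_{o}}$, and then inserts $(I_{\wh\sK_o}+\wh A_o)(I_{\wh\sK_o}+\wh A_o)^{-1}$ to reach the symmetric form. You instead apply the already-proven \eqref{etwasneu2} to the relation $\wh A^{-1}$ at the spectral point $1/\lambda$, which forces the identifications $\mul\wh A^{-1}=\ker\wh A_o$, $\wh P'_o=P_o^{\perp}+\stackrel{0}{P_{o}}$, and the piecewise description of $(\wh A^{-1})_o$ on $\mul\wh A$ and on $\cran\wh A_o$ --- precisely the bookkeeping you flag as the main obstacle. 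Both arguments converge on the same intermediate expression $-I_\sM+(1+\lambda)V^*(P_o^{\perp}+\stackrel{0}{P_{o}})V+\lambda(1+\lambda)V^*(\wh A_o-\lambda I_{\wh\sK_o})^{-1}\stackrel{0}{P_{o}}V$, and your final resolvent identity for passing to \eqref{etwasneu4} is the same as the paper's read backwards. What your route buys is a reuse of \eqref{etwasneu2} and an explicit picture of the inverse relation, which is instructive since \eqref{etwasneu3} is genuinely a statement about $\wh A^{-1}$; what the paper's route buys is brevity, since the single identity $I_\sK+\lambda(\wh A-\lambda I_\sK)^{-1}=(I_\sK-\lambda\wh A^{-1})^{-1}$ bypasses the three-fold decomposition of $\sK$ relative to $\wh A^{-1}$ entirely. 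Your spectral-calculus manipulations on the possibly unbounded operator part $B_o^{-1}$ are legitimate, so there is no gap.
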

\begin{proof}
From \eqref{dfcf1}  for each $\lambda\in\dC\setminus\dR_+$ one gets
\[
\begin{array}{l}
\stackrel{0}{\cQ}_{\wh A,V}(\lambda)=I_\sM+(1+\lambda)V^*(\wh A-\lambda I_\sK)^{-1}V\\
\qquad =I_{\sM}+V^*\left(P_o+P^\perp_o+(1+\lambda)(\wh A_o-\lambda I_\sK)^{-1}P_o\right)V-V^*V\\
\qquad =I_{\sM}-V^*P_oV+V^*\left((I_{\wh\sK_o}+\wh A_o)(\wh A_o-\lambda I_{\wh\sK_o})^{-1}P_o\right)V.
\end{array}
\]
So, we got \eqref{etwasneu2}.

Next observe that
\[
\begin{array}{l}
(I_\sK-\lambda \wh A^{-1})^{-1}=
I_\sK+\lambda(\wh A-\lambda I_\sK)^{-1}=\wh A_o(\wh A_o-\lambda I_{\wh\sK_o})^{-1}P_0+P^\perp_0\\
\qquad\qquad=\wh A_o(\wh A_o-\lambda I_{\wh\sK_o})^{-1}\stackrel{0}{P_{o}}+P^\perp_0.
\end{array}
\]
This leads to
\begin{multline*}
\stackrel{0}{\cR}_{\wh A^{-1},V}(\lambda)=-I_\sM-(1+\lambda)V^*(\lambda\wh A^{-1}-I_\sK)^{-1}V\\
=-I_{\sM}+(1+\lambda)V^*\left(I_\sK+\lambda(\wh A-\lambda I_\sK)^{-1}\right)V=
-I_{\sM}+V^*P^\perp_o V+\lambda V^*P^\perp_o V \\
+(1+\lambda)V^*\wh A_o(\wh A_o-\lambda I_{\wh\sK_o})^{-1}\stackrel{0}{P_{o}} V\\
=-I_{\sM}+V^*P^\perp_o V+\lambda V^*P^\perp_o V\\
    +(1+\lambda)V^*\wh A_o(I_{\wh\sK_o}+\wh A_o)(\wh A_o-\lambda I_{\wh\sK_o})^{-1}(I_{\wh\sK_o}+\wh A_o)^{-1}\stackrel{0}{P_{o}} V\\
=-I_{\sM}+V^*P^\perp_o V+\lambda V^*P^\perp_o V\\
+V^*\wh A_o(I_{\wh\sK_o}
     +\wh A_o)\left((\wh A_o-\lambda I_{\wh\sK_o})^{-1}-(I_{\wh\sK_o}+\wh A_o)^{-1}\right)\stackrel{0}{P_{o}} V.
\end{multline*}
\end{proof}

\begin{remark}
Given a selfadjoint contraction $F$ in $\sK$ and a contraction $V\in\bB(\sM,\sK)$ one can define the operator $N=\cfrac{1}{\sqrt{2}}(I_\sK-F)^{\half}$.
By Proposition \ref{stu10} the formula \eqref{cm2} shows that
\[
\wt\cS^{-1}(\sM)\ni\cM_{02}(\lambda)=-D^2_V+\cfrac{2\lambda}{1-\lambda}V^*\left(I_\sK-\cfrac{1+\lambda}{1-\lambda}F\right)^{-1}V,\;\; \lambda\in\dC\setminus\dR_+.
\]
Then
\[
\wt\cS(\sM)\ni-\cM_{02}(1/\lambda)=D^2_V+\cfrac{2}{1-\lambda}V^*\left(I_\sK+\cfrac{1+\lambda}{1-\lambda}F\right)^{-1}V,\;\; \lambda\in\dC\setminus\dR_+
\]
and
\[
\wt\cS(\sM)\ni-(\cM_{02}(\lambda))^{-1}
=I_\sM+V^*\left((I_\sK-F)D^2_{V^*}-\frac{2\lambda}{1+\lambda}I_\sK\right)^{-1}(I_\sK-F)V,\;\; \lambda\in\dC\setminus\dR_+.
\]
So, if $F$ is replaced by $-F$ (equivalently $\wh A=\cC(F)$ is replaced by $\wh A^{-1}=\cC(-F)$) in \eqref{dfcf1}, \eqref{dfcf2} and  \eqref{jhhfn1}, then in fact
\[
\begin{array}{l}
\stackrel{0}{\cQ}_{\wh A^{-1},V}(\lambda)=-\cM_{02}(1/\lambda),\quad
\stackrel{0}{\cR}_{\wh A^{-1}, V}(\lambda)=\cM_{02}(\lambda),\\[3mm]
-\stackrel{0}{\cQ}_{\wh A^{-1},V}(\lambda)^{-1}=(\cM_{02}(1/\lambda))^{-1}
\;\; \forall\lambda\in\dC\setminus\dR_+.
\end{array}
\]
\end{remark}

\subsection{Stieltjes/inverse Stieltjes functions whose values are unbounded operators/l.r.'s in $\bB(\sM)$}

Recall from \cite{Ka} that: 1) a family $\cV(\lambda)$ of closed linear operators or l.r.'s defined on a domain $\Pi\subset\dC$, is called \textit{holomorphic of the type(A)} in $\Pi$ if $\dom\cV(\lambda)\equiv\cD $ (does not depend on $\lambda\in\Pi$) and the vector-function $\cV(\lambda)f$ is holomorphic on $\Pi$ for each $f\in\cD$:
2) a family $\cV(\lambda)$, $\lambda\in \Pi$  of $m$-sectorial l.r.'s is called \textit{holomorphic of the type(B)} in $\Pi$ if $\cD[\cV(\lambda)]\equiv\cD$
(domain of the closed associated form does not depend on $\lambda\in\Pi$) and $\cV(\lambda)[f,g]$ is a holomorphic scalar function for every $f,g\in\cD$.

\begin{proposition}
\label{stutcor1}
Let $F$ be a selfadjoint contraction in the Hilbert space $\sK$, let $\wh A=\cC(F)$ and let $V\in\bB(\sM,\sK)$ be a contraction. Let $\stackrel{0}{\cQ}_{\wh A,V}$ and $\stackrel{0}{\cR}_{\wh A,V}$ be given by \eqref{dfcf1} and \eqref{dfcf2}. Assume that $\cZ$ is a closed (not necessary densely defined) linear operator in $\sM$.
Then for each $\lambda\in\dC\setminus\dR_+$ the sesquilinear forms
\begin{equation}\label{theforms}
\begin{array}{l}
\wh{\cQ}_{\wh A,V,\cZ}(\lambda)[f,g]=\left(\stackrel{0}{\cQ}_{\wh A,V}(\lambda)\cZ f,\cZ g\right),\\
\wh\cR_{\wh A,V,\cZ}(\lambda)[f,g]=\left(\stackrel{0}{\cR}_{\wh A,V}(\lambda)\cZ f,\cZ g\right),\; f,g\in\dom\cZ
\end{array}
\end{equation}
are closed and sectorial. Besides, $\wh {\cQ}_{\wh A,V}(\lambda)[f,g]$ and $ \wh \cR_{\wh A,V}(\lambda)[f,g]$ are holomorphic on $\dC\setminus\dR_+$ for all $f,g\in\dom\cZ$. The associated l.r.'s $\wh{\cQ}_{\wh A,V,\cZ}(\lambda)$ and $\wh\cR_{\wh A,V,\cZ}(\lambda)$ are given by
\begin{equation}
\label{therelat}
\begin{array}{l}
\left\{\begin{array}{l}\dom\wh{\cQ}_{\wh A,V,\cZ}(\lambda)=\left\{f\in\dom \cZ:\stackrel{0}{\cQ}_{\wh A,V}(\lambda)\cZ f\in\dom \cZ^*\right\},\\
\wh{\cQ}_{\wh A,V,\cZ}(\lambda)f=\cZ^*\stackrel{0}{\cQ}_{\wh A,V}(\lambda)\cZ f,\;f\in\dom\wh\cQ(\lambda)
\end{array}\right.\\[4mm]
\left\{\begin{array}{l}\dom\wh\cR_{\wh A,V,\cZ}(\lambda)=\left\{f\in\dom \cZ:\cR_0(\lambda)\cZ f\in\dom \cZ^*\right\},\\
\wh\cR_{\wh A,V,\cZ}(\lambda)f=\cZ^*\stackrel{0}{\cR}_{\wh A,V}(\lambda)\cZ f,\;f\in\dom\wh\cR(\lambda).
\end{array}\right.
\end{array}
\end{equation}
Thus, $\wh {\cQ}_{\wh A,V,\cZ}$ and $\wh\cR_{\wh A,V,\cZ}$ form holomorphic families of the type (B). Moreover, $\wh{\cQ}_{\wh A,V,\cZ}$ is a Stieltjes family, while $\wh\cR_{\wh A,V,\cZ}$ is an inverse Stieltjes family.
\end{proposition}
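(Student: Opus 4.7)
I will focus on $\wh\cQ_{\wh A,V,\cZ}$; the case of $\wh\cR_{\wh A,V,\cZ}$ is entirely analogous (and can also be deduced from the first case via the identity $\stackrel{0}{\cR}_{\wh A,V}(\lambda)=-\stackrel{0}{\cQ}_{\wh A,V}(1/\lambda)$). The plan is to verify, in order, closedness and sectorialness of the form for each $\lambda\in\dC\setminus\dR_+$, the explicit description of the associated $m$-sectorial l.r.\ in \eqref{therelat}, holomorphicity in $\lambda$ (whence type~(B)), and finally the Stieltjes property.

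For the first two steps, sectorialness of the form $\wh\cQ(\lambda)[f,g]=(\stackrel{0}{\cQ}_{\wh A,V}(\lambda)\cZ f,\cZ g)$ on $\dom\cZ$ is inherited from the sector of the bounded operator $\stackrel{0}{\cQ}_{\wh A,V}(\lambda)$, which by Theorem~\ref{cbvgfn} is $m$-sectorial up to a rotation depending on $\lambda$. The key tool for closedness is the coercivity bound $|(\stackrel{0}{\cQ}_{\wh A,V}(\lambda)h,h)|\ge c_0(\lambda)\|h\|^2$ of Corollary~\ref{inverse11}: for a sequence $\{f_n\}\subset\dom\cZ$ that is convergent in $\sM$ and Cauchy with respect to the form, this bound forces $\{\cZ f_n\}$ to be Cauchy in $\sM$, and closedness of $\cZ$ then gives the limit $f\in\dom\cZ$ with $\cZ f_n\to \cZ f$; boundedness of $\stackrel{0}{\cQ}_{\wh A,V}(\lambda)$ closes the argument. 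The first representation theorem produces the unique representing $m$-sectorial l.r., and \eqref{therelat} follows from the definition of the adjoint $\cZ^*$: if $\stackrel{0}{\cQ}_{\wh A,V}(\lambda)\cZ f\in\dom\cZ^*$ and $\phi\in\cZ^*\stackrel{0}{\cQ}_{\wh A,V}(\lambda)\cZ f$, then $\wh\cQ(\lambda)[f,g]=(\phi,g)$ for every $g\in\dom\cZ$, placing $\{f,\phi\}$ in the representing l.r.; the converse is equally immediate, and equality of multivalued parts is automatic since $\mul\cZ^*=\sM\ominus\cdom\cZ=\sM\ominus\cdom\wh\cQ(\lambda)$.

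Holomorphicity of $\lambda\mapsto \wh\cQ(\lambda)[f,g]$ on $\dC\setminus\dR_+$ for fixed $f,g\in\dom\cZ$ is inherited from the norm-holomorphicity of $\stackrel{0}{\cQ}_{\wh A,V}$; combined with the constant form-domain $\dom\cZ$ this is precisely Kato's definition of type~(B). For the Stieltjes property I invoke the characterization of Theorem~\ref{cbvgfn}: substituting $h=\cZ f$ into the sector inequality satisfied by $\stackrel{0}{\cQ}_{\wh A,V}$ yields the corresponding inequality for the form $\wh\cQ(\lambda)[f]$; the identity $\wh\cQ(\lambda)^*=\wh\cQ(\bar\lambda)$ descends from the analogous identity for $\stackrel{0}{\cQ}_{\wh A,V}$ via the form-based construction; and for $x<0$ the nonnegativity of $\stackrel{0}{\cQ}_{\wh A,V}(x)$ combined with $\wh\cQ(x)[f]=(\stackrel{0}{\cQ}_{\wh A,V}(x)\cZ f,\cZ f)\ge 0$ gives $\wh\cQ(x)\ge 0$ selfadjoint via the first representation theorem.

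The main obstacle will be verifying the resolvent-holomorphicity condition in Definition~\ref{invStieltjes}(2) along $\dR_-$, since $\wh\cQ(\lambda)$ may be an unbounded l.r.\ with a nontrivial multivalued part. I plan to deduce this either from Kato's general resolvent-analyticity theorem for type~(B) families, or by a direct computation expressing $(\wh\cQ(\lambda)-\xi I)^{-1}$ in terms of the norm-holomorphic bounded factor $\stackrel{0}{\cQ}_{\wh A,V}(\lambda)$ and exploiting analyticity together with a Neumann-series expansion near any $x\in\dR_-$.
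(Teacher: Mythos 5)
Your proposal is correct and follows essentially the same route as the paper: closedness of the form via the coercivity bound of Corollary \ref{inverse11} together with closedness of $\cZ$, sectorialness inherited from the bounded $\bB(\sM)$-valued Stieltjes function, and the first representation theorem to obtain \eqref{therelat}. The paper's own proof is in fact terser than yours — it proves only the closedness in detail and leaves the holomorphy, the type (B) conclusion, and the Stieltjes/inverse Stieltjes verification implicit — so the extra care you take with the adjoint identity $\wh\cQ(\lambda)^*=\wh\cQ(\bar\lambda)$ and with resolvent holomorphy on $\dR_-$ (via Kato's type (B) theory) only fills in steps the authors take for granted.
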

\begin{proof}
The bounded operator-valued function $\stackrel{0}{\cQ}_{\wh A,V}(\lambda)$ belongs to the Stieltjes class.
It follows that $\stackrel{0}{\cQ}_{\wh A,V}(\lambda)$ are bounded sectorial operators for each $\lambda\in\dC\setminus\dR_+$; see Theorem~\ref{cbvgfn}.
Let us show that the form $\stackrel{0}{\cQ}_{\wh A,V}(\lambda)[f,g]$ with the domain $\cD[\stackrel{0}{\cQ}_{\wh A,V}(\lambda)]=\dom \cZ$ is closed in $\sM.$
If $\{f_n\}\subset\dom\cZ$ and
$$\lim_{n\to\infty}f_n=f,\; \lim_{n,m\to\infty}\left|\left(\stackrel{0}{\cQ}_{\wh A,V}(\lambda)\cZ(f_n-f_m),\cZ(f_n-f_m)\right)\right|=0,$$
then \eqref{stu1222} in Corollary \ref{inverse11} shows that $ \lim_{n,m\to\infty}\left\|\cZ(f_n-f_m)\right\|=0$. Because $\cZ$ is a closed operator, one has
\[
f\in\dom\cZ,\quad \lim_{n\to \infty}\cZ f_n=\cZ f.
\]
Now the boundedness of $\stackrel{0}{\cQ}_{\wh A,V}(\lambda)$ yields
\[
\lim_{n\to\infty}\left(\stackrel{0}{\cQ}_{\wh A,V}(\lambda)\cZ(f-f_n),\cZ(f-f_n)\right)=0,\quad
\lim_{n\to\infty}\stackrel{0}{\cQ}_{\wh A,V}(\lambda)[f_n]=\stackrel{0}{\cQ}_{\wh A,V}(\lambda)[f].
\]
Thus, the sesquilinear form $\wh{\cQ}_{\wh A,V,\cZ}(\lambda)[\cdot,\cdot]$ given by \eqref{theforms} is closed in $\sM$.
By the first representation theorem \cite{Ka}, \cite{RoBe} the associated l.r. is given by \eqref{therelat}.

Similar proof can be given for $\wh\cR_{\wh A,V,\cZ}.$
\end{proof}

\subsection{An example of a special type of Stieljes/inverse Stieltjes function/family}

Here we construct an example of a Stieltjes/inverse Stieltjes family $\cQ$ in $\sM$ such that
$$\dom\cQ(\lambda)\cap\dom\cQ(\mu)=\{0\},\;\forall \lambda,\mu,\; \lambda\ne\mu.$$
This next example is analogous to \cite[Example~4.4]{DHM15}, where the existence of Nevanlinna functions
$F(\lambda)$, $\lambda\in\cmr$, whose values are densely defined unbounded operators
on a Hilbert space $\cH$ such that
\[
 \dom F(\lambda) \cap \dom F(\mu)=\{0\}, \quad \forall \, \lambda\neq \mu, \; \lambda,\mu \in \cmr,
\]
while the function $F(\lambda)$ is form domain invariant in the sense that the form
\begin{equation}\label{ImF}
 \st_{F(z)}[u,v]:=\frac{1}{z-\bar z}\,[(F(z)u,v)-(u,F(z)v)], \quad u,v\in \dom F(z),
\end{equation}
is closable and the closure has a constant domain in $\cH$.


\begin{example}
Assume $\sM$ is an infinite-dimensional separable Hilbert space.
Let $\wh A$ be the zero-operator in an auxiliary
infinite-dimensional Hilbert space $\sK$, i.e., $\dom\wh A=\sK$,
$\wh Af=0$ for all $f\in\sK$. Then for an arbitrary contraction
$V\in\bB(\sM,\sK)$ the $\bB(\sM)$-valued function $\cQ_{01}(\wh A,
V,\lambda)=I_\sM+(1+\lambda)V^*(\wh A-\lambda I)^{-1}V$  
takes the form
$$\cQ_{01}(0,V,\lambda)=I_\sM+(1+\lambda)(-\lambda^{-1})V^*V=I_\sM-(\lambda^{-1}+1)V^*V.$$
The function $\cQ_{01}(0,V,\lambda)$ belongs to $\wt S(\sM)$, on the domain $\dC\setminus\dR_+$ and $\cQ_{01}(0,V,-1)=I_\sM$.
Now suppose $\ran V^*V\ne \sM,$ $\ker V=\{0\}$. Then the range of the operator $V^*V$ is a dense linear manifold in $\sM$. Due to the von Neumann theorem \cite{FW} one can find a bounded nonnegative selfadjoint operator $X$ such that
$\ker X=\{0\},$ $\ran X\cap\ran V^*V=\{0\}$. Set $\cZ:=X^{-1}$ and let
\begin{multline*}
\cQ_1(\lambda)=\cZ\cQ_{01}(0,V,\lambda)\cZ=\cZ\left(I_\sM-(\lambda^{-1}+1)V^*V\right)\cZ,\\
\dom\cQ_1(\lambda)=\left\{f\in\dom\cZ:\left(I_\sM-(\lambda^{-1}+1)V^*V\right)\cZ f\in\dom\cZ\right\},
\lambda\in\dC\setminus\dR_+.
\end{multline*}
Then $\dom\cQ_1(\lambda)$ is dense in $\sM$ for all $\lambda\in\dC\setminus \dR_+$.
Due to the condition $\dom Z\cap\ran V^*V=\{0\}$ we get that $\dom \cQ_1(\lambda)\cap\dom Z^2=\{0\}$ for all $\lambda\in\dC\setminus\dR_+$.
Let $\lambda_1,\lambda_2\in\dC\setminus\dR_+$, $\lambda,\mu$. Suppose $f\in\dom \cQ_1(\lambda)\cap\dom \cQ_1(\mu)$. Then
\[
\left(I_\sM-(\lambda^{-1}+1)V^*V\right)\cZ f, \left(I_\sM-(\mu^{-1}+1)V^*V\right)\cZ f\in\dom\cZ.
\]
It follows that
\begin{multline*}
\left(I_\sM-(\lambda^{-1}+1)V^*V\right)\cZ f-\left(I_\sM-(\mu^{-1}+1)V^*V\right)\cZ f\in\dom\cZ\\
\Longrightarrow V^*V\cZ f\in\dom \cZ\iff V^*V\cZ f\in\ran X\Longrightarrow \cZ f=0\Longrightarrow f=0.
\end{multline*}
Thus, $\dom\cQ_1(\lambda)\cap\dom\cQ_1(\mu)=\{0\}$. Observe that $\dom\cZ^2=\dom\cQ_1(-1).$

Set  $\cQ_2(\lambda):=-\cQ_1(\lambda^{-1})$. Then $\cQ_2\in \wt S^{-1}(\sM)$ and, clearly,  $\dom\cQ_2(\lambda)\cap\dom\cQ_2(\mu)=\{0\}$ for
all distinct $\lambda,\mu\in\dC\setminus\dR_+$.

Arguing similarly, we get that if $\ran V^*\cap\ran \cZ =\{0\}$ ($\cZ$ is unbounded selfadjont operator), then for an arbitrary $\wh A$ the corresponding Stieltjes and inverse Stieltjes families
\[\begin{array}{l}
Q_{1}(\lambda)=\cZ\left(I_\sM+(1+\lambda)V^*(\wh A-\lambda I)^{-1}V\right)\cZ,\\
Q_{2}(\lambda)=\cZ\left(-I_\sM-\left(1+\frac{1}{\lambda}\right)V^*\left(\wh A-\frac{1}{\lambda} I\right)^{-1}V\right)\cZ.$$
\end{array}
\]
possess the properties
\[
\dom Q_{1}(\lambda)\cap\dom Q_1(\mu)=\{0\},\;\dom Q_{2}(\lambda)\cap\dom Q_2(\mu)=\{0\},
\]
for all $\lambda,\mu\in\dC\setminus\dR_+,\;\lambda\ne\mu.$
\end{example}

Form domain invariant Nevanlinna functions have been introduced and studied extensively in \cite{DM97,DHM15,DHM20a,DHM20b},
see also \cite{DHM17}.
It is interesting to note that in \cite[Example~6.7]{DHM17} (or \cite[Example~2.7]{DHM20b})
it is shown that there are Stieltjes and inverse Stieltjes functions
whose imaginary parts as defined in \eqref{ImF} need not be form domain invariant,
while according to Theorem \ref{typB} (as proved in the next section) the closed sectorial forms associated with
these functions themselves have a constant domain.

\section{Closed sesquilinear forms associated with Stieltjes and inverse Stieltjes families}\label{sesqu}

The next theorem is the main result in this section.
\begin{theorem}
\label{typB}
(1) If a family belongs to Stieltjes/inverse Stieltjes class, then it is a holomorphic family of type (B)
in the sense of \cite{Ka}.

(2) Let $\cQ(\lambda)$ be a Stieltjes family in $\sM$. Then there
exist a Hilbert space $\sK'$, a nonnegative selfadjoint relation
$A'$ in $\sK'$, a contraction $N'\in\bB(\sM,\sK')$, and
a closed linear operator $\cX$ in $\sM$ with $\dom\cX=\cD[\cQ(-1)]$ such that
$(I+A')^{-1}\ge N'N'^*$ and for all $u,v\in\dom\cX$,
\[
\begin{array}{l}
\cQ(\lambda)[u,v]
=\left(\left(I_{\sM} +(1+\lambda)N'^*\left(I+(1+\lambda)( A'-\lambda I)^{-1}\right)N'\right)\cX u, \cX v\right).
\end{array}
\]
Therefore there is a contraction $V'\in\bB(\sM,\sK')$ such that for all $\lambda\in\dC\setminus\dR_+$,
\begin{equation}\label{QQQ}
\cQ(\lambda)[u,v]
=\Biggl(\Biggl(I_\sM+(1+\lambda)V'^*( A'-\lambda I_\sK)^{-1}V'\Biggr)\cX u, \cX v\Biggr),\quad  u,v\in\cD[\cQ(-1)].
\end{equation}

(3) Let $\cR(\lambda)$ be an inverse Stieltjes family in $\sM$. Then there exist a Hilbert space $\sK''$, a nonnegative selfadjoint
relation $A''$ in $\sK''$, a contraction $N''\in\bB(\sM,\sK)$, and a closed linear operator $\cY$ in $\sM$ with $\dom\cY=\cD[\cR(-1)]$
such that $(I+ A'')^{-1}\ge N''N''^*$ and for all $h,g\in \dom\cY$,
\[
\begin{array}{l}
\cR(\lambda)[h,g]
 =\left(\left(-I_{\sM} +(1+\lambda)N''^*\left(I+(1+\lambda)(A''-\lambda I)^{-1}\right)N''\right)\cY h, \cY g\right).
\end{array}
\]
Therefore there is a contraction $V''\in \bB(\sM,\sK'')$ such that for all $\lambda\in\dC\setminus\dR_+$,
\[
 \cR(\lambda)[h,g]=\Biggl(\Biggl(-I_\sM+(1+\lambda)V''^*
 (I_\sK-\lambda (A'')^{-1})^{-1}V''\Biggr)\cY h, \cY g\Biggr),\quad  h,g\in \cD[\cR(-1)].
\]

(4) If $\cR(\lambda)=-\cQ(\lambda)^{-1}$ then one can choose $\sK:=\sK_1=\sK_2$ and $N:=N'=N''$ such that
\[
 NN^*=(I+ A')^{-1}-(I+A'')^{-1}.
\]
\end{theorem}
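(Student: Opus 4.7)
My approach passes from the given Stieltjes family $\cQ\in\wt\cS(\sM)$ to the associated function $\Omega\in\cRS(\sM)$ via Lemma \ref{TH1}, and then realizes $\Omega$ as the transfer function of a minimal passive selfadjoint system $\tau=\{T;\sM,\sM,\sK\}$ with $T=\bigl[\begin{smallmatrix}D&C\\ C^*&F\end{smallmatrix}\bigr]$. I would first use the block decomposition \eqref{struct11} to reduce to the nondegenerate case $\ker D_{\Omega(0)}=\{0\}$: on $\ker(I-\Omega(0))$ the family contributes only the constant multi-valued part $\{0\}\times\mul\cQ$, and on $\ker(I+\Omega(0))$ it vanishes, so neither piece affects the closed form. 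In this case Lemma \ref{LEM1} yields the parametrization $C=D_DN^*$, $F=-NDN^*+D_{N^*}XD_{N^*}$, together with selfadjoint contractions $F',F''\in\bB(\sK)$ satisfying $F'-F''=2NN^*$.

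The key algebraic input will be the factorization $I-\Omega(z)=(I-D)^{1/2}(I-\Sigma_+(z))(I-D)^{1/2}$ from Lemma \ref{LEM1}(3), together with its explicit inverse $(I-\Sigma_+(z))^{-1}=I+z(I+D)^{1/2}N^*(I-zF')^{-1}N(I+D)^{1/2}$. Substituting these into $\cQ(\lambda)=-I+2(I-\Omega(z))^{-1}$ with $z=(1+\lambda)/(1-\lambda)$ and expanding the sesquilinear form on vectors $\phi,\psi\in\ran(I-D)^{1/2}=\cD[\cQ(-1)]$ (writing $\phi=(I-D)^{1/2}\alpha$, $\psi=(I-D)^{1/2}\beta$), the outer $(I-D)^{\pm 1/2}$ factors cancel to yield
\[
\cQ(\lambda)[\phi,\psi]=\bigl((I+2zN^*(I-zF')^{-1}N)\cX\phi,\,\cX\psi\bigr),
\]
where $\cX:=(I+D)^{1/2}(I-D)^{-1/2}$ is a closed operator on $\sM$ with $\dom\cX=\cD[\cQ(-1)]$.

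Setting $A':=\cC(F')$ and $N':=N$ (extended by zero on $\sM\ominus\sD_D$), the Cayley identity $(1+\lambda)(I+A')(A'-\lambda I)^{-1}=2z(I-zF')^{-1}$, derived from $I+F'=2(I+A')^{-1}$ and $I-F'=2A'(I+A')^{-1}$, recasts the above into the representation of part (2). The inequality $(I+A')^{-1}\geq N'N'^{*}$ reduces via $(I+A')^{-1}=\tfrac{1}{2}(I+F')$ to $I+F'\geq 2NN^*$, which is immediate from $F'=NN^*+D_{N^*}XD_{N^*}\geq 2NN^*-I$; Douglas's lemma then produces the contraction $V':=(I+A')^{1/2}N'$ for the second representation in (2). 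Part (3) is handled in parallel using $\cR(\lambda)=I-2(I+\Omega(z))^{-1}$, the dual factorization $I+\Omega(z)=(I+D)^{1/2}(I+\Sigma_-(z))(I+D)^{1/2}$ with $(I+\Sigma_-(z))^{-1}=I-z(I-D)^{1/2}N^*(I-zF'')^{-1}N(I-D)^{1/2}$, and $\cY:=(I-D)^{1/2}(I+D)^{-1/2}$, with $A''=\cC(F'')$, $N''=N$. For part (4), $\cR=-\cQ^{-1}$ forces both families to originate from the same $\Omega$, so they share $N,F',F''$, and the Cayley identity $(I+\cC(X))^{-1}=\tfrac{1}{2}(I+X)$ gives $(I+A')^{-1}-(I+A'')^{-1}=\tfrac{1}{2}(F'-F'')=NN^*$. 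Part (1) follows immediately from the representation: the form $\cQ(\lambda)[\cdot,\cdot]$ has the $\lambda$-independent domain $\dom\cX=\cD[\cQ(-1)]$, and for fixed $\phi,\psi\in\dom\cX$ the scalar function $\lambda\mapsto\cQ(\lambda)[\phi,\psi]$ is holomorphic on $\dC\setminus\dR_+$ by norm-holomorphy of the bounded operator $\stackrel{0}{\cQ}_{A',V'}(\lambda)$.

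The hard part will be verifying that the closed sectorial form defined by the explicit formula on $\dom\cX$ actually coincides, via the first representation theorem, with the closed form associated with $\cQ(\lambda)$ itself, rather than with some proper extension or restriction. Closedness of the explicit form on $\dom\cX$ should follow from the lower bound in Corollary \ref{inverse11} combined with the closedness of $\cX$, mimicking the argument of Proposition \ref{stutcor1}; uniqueness in the first representation theorem then identifies the representing l.r.\ with $\cQ(\lambda)$. Additional technical care will be needed in handling the Moore--Penrose pseudo-inverses on the degenerate components $\ker(I\pm\Omega(0))$ and in reconciling the inequality $I+F''\geq 2NN^*$ required for part (3) with the inequality $I-F''\geq 2NN^*$ that the parametrization in Lemma \ref{LEM1} produces directly.
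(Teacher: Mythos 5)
Your proposal is correct and follows essentially the same route as the paper: Lemma \ref{TH1} to pass to $\Omega\in\cRS(\sM)$, the transfer-function realization with the factorization $I-\Omega(z)=(I-\Omega(0))^{\half}(I-\Sigma_+(z))(I-\Omega(0))^{\half}$ and its inverse through $F'$ from Lemma \ref{LEM1}, the closed operator $\cX=(I+\Omega(0))^{\half}(I-\Omega(0))^{[-\half]}$, and the Cayley transform $A'=\cC(F')$ with $N=(I+A')^{-\half}V'$. The only organizational differences are that the paper handles the degenerate components via Moore--Penrose pseudo-inverses rather than splitting them off via \eqref{struct11}, and that at the domain-identification step it uses Proposition \ref{clfrm} together with the Harnack-type range equality \eqref{ravenstva1} before appealing to Proposition \ref{stutcor1}, whereas you propose closedness of the explicit form plus uniqueness in the first representation theorem --- both viable, and your observation on the sign tension $I\pm F''\ge 2NN^*$ in part (3) correctly reflects that the inequality there must be read through $(A'')^{-1}$, consistently with \eqref{fin22} and the second display of that part.
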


\begin{proof}
First the case of a Stieltjes family $\cQ(\lambda)$ is considered.

By Lemma \ref{TH1} there is $\Omega(z)\in\cRS(\sM)$ such that
\begin{equation}\label{Omega0}
 \cQ(\lambda)=-I+2\left(I-\Omega\left(\cfrac{1+\lambda}{1-\lambda}\right)\right)^{-1}.
\end{equation}
Denote $z=\frac{1+\lambda}{1-\lambda}$.
We proceed by constructing a more explicit representation for $I-\Omega(z)$ by rewriting $\Omega(z)$
as a transfer function
\[
 \Omega(z)=D+zC(I-zF)^{-1}C^*,\; z\in\dC\setminus\{(-\infty,-1]\cup[1,+\infty)\},
\]
of a passive selfadjoint system
$\tau=\left\{ \begin{bmatrix} D&C\cr C^*&F\end{bmatrix},\sM,\sM,\sK\right\}$
with input-output space $\sM$ and the state space $\sK$.
Then the entries of the selfadjoint contraction
\[
T=\begin{bmatrix} D&C\cr
C^*&F\end{bmatrix}:\begin{array}{l}\sM\\\oplus\\
\sK\end{array}\to
\begin{array}{l}\sM\\\oplus\\
\sK\end{array}
\]
take the form \eqref{parame}. Note that $D=\Omega(0)$.
It follows that for all $z\in\dC\setminus\{(-\infty,-1]\cup[1,+\infty)\}$
\[
\begin{array}{l}
 I-\Omega(z) =I-D-zC(I-zF)^{-1}C^*\\
  =(I-D)^{\half}\left(I-z(I+D)^{\half}N^*(I-zF)^{-1}NP_{\sD_D}(I+D)^{\half}\right)(I-D)^{\half}\\
  =(I-\Omega(0))^{\half}(I-\Sigma_+(z)) (I-\Omega(0))^{\half},
\end{array}
\]
where $\Sigma_+(z)=z(I+D)^{\half}N^*(I-zF)^{-1}NP_{\sD_D}(I+D)^{\half}$
is defined in \eqref{zzz}.

 Let a selfadjoint contraction $F'$ be defined as in \eqref{ff}, i.e.,
\[
F'=NN^*+D_{N^*}XD_{N^*}.
\]
By Lemma \ref{LEM1} the bounded inverse $(I-\Sigma_+(z))^{-1}$ for all $z\in\dC\setminus\{(-\infty,-1]\cup[1,+\infty)\}$
is given by
\begin{equation}\label{Omega2}
 (I-\Sigma_+(z))^{-1}=I+z(I+D)^{\half}N^*(I-zF')^{-1}NP_{\sD_D}(I+D)^{\half}.
\end{equation}

Suppose $\RE\lambda<0$. 
  Then $|z|<1$ and $\Omega(z)$ is a contraction, $\Omega(z)^*=\Omega(\bar z)$,
$\Omega(x)$ is selfadjoint contraction for all $x\in(-1,1)$. Moreover, the operator $\Omega(z)$ belongs to the class $\wt C_\sM(\alpha_z),$ $\alpha_z=\arctan\left(\cfrac{2|\IM z|}{1-|z|^2}\right),$ see \eqref{omca}. Therefore, the bounded operator $I-\Omega(z)$ and the l.r. $(I-\Omega(z))^{-1}$ are $m-\alpha_z$-sectorial. By Proposition \ref{clfrm} the domain $\cD[(I-\Omega(z))^{-1}]$ of the closed form associated with $(I-\Omega(z))^{-1}$  coincides with $\ran(I-\RE\Omega(z))^{\half}$. Since
$I-\RE\Omega(z)$ is harmonic function, the Harnack inequalities yields the equality (see  \cite{Shmul2}):
\begin{equation}
\label{ravenstva1}
\ran(I-\RE\Omega(z))^{\half}=\ran(I-\Omega(0))^{\half},\;
 z\in\dD.
\end{equation}
Thus we have the equality
\[
\cD[\cQ(\lambda)]=\cD[\cQ(-1)],\;\RE \lambda<0.
\]
From the above expressions of $I-\Omega(z)$ it follows that the bounded operators $I-\Sigma_+(z)$ and $(I-\Sigma_+(z))^{-1}$ are sectorial and
\begin{multline*}
((I-\Omega(z))^{[-1]} f,f)=\left((I-\Sigma_+(z))^{-1} (I-\Omega(0))^{[-\half]}f,(I-\Omega(0))^{[-\half]}f\right),\\
 f\in\ran(I-\Omega(z))=\dom\cQ(\lambda).
\end{multline*}

Now rewrite
\[
\begin{array}{c}
 u=(I-\Omega(0))^{\half}(I-\Omega(0))^{[-\half]}u,\\
 ||u||^2=((I-\Omega(0))(I-\Omega(0))^{[-\half]}u, (I-\Omega(0))^{[-\half]}u),
\end{array}
\]
and denote
\[
\cX=(I+\Omega(0))^{\half}(I-\Omega(0))^{[-\half]}.
\]
Then it follows from \eqref{Omega0}, \eqref{Omega2}, and Proposition \ref{stutcor1} that the closed quadratic form $\cQ[\cdot]$ associated with $\cQ(\lambda)$ admits the expression
\[
\cQ(\lambda)[u]
=\left(\left(I-2N^*\left(F'-\frac{1-\lambda}{1+\lambda}I\right)^{-1}N\right)\cX u,\cX u \right),\;
u\in\dom \cX=\cD[\cQ(-1)].
\]
Let the nonnegative selfadjoint l.r. $A'=-I+2(I+F')^{-1}$ be the Cayley transform of $F'$. Then $N=(A'+I)^{-\half} V'$ for some contraction
$ V'\in\bB(\sM,\sK)$ and the expression for $\cQ(\lambda)[u]$ can be rewritten as follows
\[
\cQ(\lambda)[u]=\left(\left(I_\sM+(1+\lambda)V'^*(A'-\lambda I_\sK)^{-1}V'\right)\cX u,\cX u\right),\; u\in\dom\cX,\; \RE\lambda<0.
\]
Thus, $\cQ(\lambda)[\cdot,\cdot]$ is of the form $\wh{\cQ}_{\wh A,V,\cZ}(\lambda)[\cdot,\cdot]$ given by \eqref{theforms};
see also \eqref{dfcf1}, \eqref{Q0}, Proposition \ref{stutcor1}.
Besides, the function $\cQ(\lambda)[f,g]$ is holomorphic on $\dC\setminus\dR_+$.
This means that $\cQ(\lambda)$ is a holomorphic family of type (B).

By means of the transform $R(\lambda)=-\cQ\left(\lambda^{-1}\right)$ one concludes that also
inverse Stieltjes families are holomorphic families of type (B). Thus, assertion (1) is proven and we have representations of $\cQ(\lambda)[u,v]$, $u,v\in\dom\cX=\cD[\cQ(-1)]$, in the statement (2).

Similarly, using the representation
$$\cR(\lambda)=-\cQ^{-1}(\lambda)= I-2(I+\Omega(z))^{-1},\; z=\cfrac{1+\lambda}{1-\lambda},\;\lambda\in\dC\setminus\dR_+$$
for the holomorphic inverse Stieltjes family $\cR(\lambda)$ in $\sM$ and  the contraction $F''=-NN^*+D_{N^*}XD_{N^*}$, one can prove that $\cR(\lambda)$ is  of type (B), the sesquilinear form $\cR[\cdot,\cdot]$ admits representations in the statement (3), and if $\cR(\lambda)=-\cQ(\lambda)^{-1}$, then the statement (4) is valid.

This completes the proof.
\end{proof}

Notice that the representations for $\cQ(\lambda)$ in \eqref{Q1}, $\cR(\lambda)$ \eqref{R1}, and for the corresponding
closed forms as stated in item (2) of Introduction follow from \eqref{QQQ} with $\cZ=\cX=(I+\Omega(0))^{\half}(I-\Omega(0))^{[-\half]}$.

\begin{remark}
%
Using a slightly different approach another similar type of representation for the transformed family
$\cQ(\lambda)=I-2\left(I-\Omega\left(\frac{1+\lambda}{1-\lambda}\right)\right)^{-1}$
was constructed in \cite[Theorem~4.4]{ArlHassi_2015} in the case that $\lambda$ belongs to the left open half-place, $\RE \lambda<0$.
\end{remark}

\begin{corollary}
(1) Let $\cQ\in\wt\cS(\sM)$. Then the function
\[
\sK(\lambda,\mu)[\cdot,\cdot]:=\cQ(\lambda)[\cdot,\cdot]+\cQ(\bar\mu)[\cdot,\cdot]+
\cfrac{\lambda+\bar\mu}{\lambda-\bar\mu}\Biggl(\cQ(\lambda)[\cdot,\cdot]-\cQ(\bar\mu)[\cdot,\cdot]\Biggr)
\]
is a nonnegative kernel on the domain $\cD[\cQ(-1)]$.

 (2) Let $\cR\in\wt\cS^{-1}(\sM)$. Then the function
\[
\cL(\lambda,\mu)[\cdot,\cdot]:=\cR(\lambda)[\cdot,\cdot]+\cR(\bar\mu)[\cdot,\cdot]+
\cfrac{\lambda+\bar\mu}{\lambda-\bar\mu}\Biggl(\cR(\lambda)[\cdot,\cdot]-\cR(\bar\mu)[\cdot,\cdot]\Biggr)
\]
is a nonpositive kernel on the domain $\cD[\cR(-1)]$.
\end{corollary}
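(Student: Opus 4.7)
The plan is to reduce both assertions to the operator-valued kernel nonnegativity of $\cK_{01}$ and $\cK_{02}$ recorded just after Proposition \ref{stu10}, and then to transfer this to the (possibly unbounded) form setting via the closed operators $\cX$ and $\cY$ supplied by Theorem \ref{typB}.

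For part (1), I would first apply Theorem \ref{typB}(2) to obtain a Hilbert space $\sK'$, a nonnegative selfadjoint relation $A'$ in $\sK'$, a contraction $V'\in\bB(\sM,\sK')$, and a closed operator $\cX$ with $\dom\cX=\cD[\cQ(-1)]$ such that
\[
 \cQ(\lambda)[u,v]=\bigl(\stackrel{0}{\cQ}_{A',V'}(\lambda)\cX u,\cX v\bigr), \quad u,v\in\cD[\cQ(-1)],\; \lambda\in\dC\setminus\dR_+.
\]
Inserting this identity into the definition of $\sK(\lambda,\mu)[\cdot,\cdot]$ and noting that $(\lambda+\bar\mu)/(\lambda-\bar\mu)$ is a scalar that commutes with the inner product yields
\[
 \sK(\lambda,\mu)[u,v]=\bigl(\cK_{01}(\lambda,\mu)\,\cX u,\cX v\bigr), \quad u,v\in\cD[\cQ(-1)],
\]
where $\cK_{01}$ is the operator-valued kernel attached to $\stackrel{0}{\cQ}_{A',V'}$ in the text following Proposition \ref{stu10}. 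Since $\cK_{01}$ is already known to be a nonnegative operator kernel on $\dC\setminus\dR_+$ (ultimately an instance of the $\cRS(\sM)$-kernel \eqref{ythfd22}, Theorem \ref{newchar}(iii), transferred by the linear fractional change of variable from Lemma \ref{TH1}), summing over any finite family $\lambda_1,\dots,\lambda_n\in\dC\setminus\dR_+$ and $u_1,\dots,u_n\in\cD[\cQ(-1)]$ gives
\[
 \sum_{j,k}\sK(\lambda_j,\lambda_k)[u_k,u_j]=\sum_{j,k}\bigl(\cK_{01}(\lambda_j,\lambda_k)\cX u_k,\cX u_j\bigr)\ge 0,
\]
which is the required nonnegativity on $\cD[\cQ(-1)]$.

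For part (2), I would run the same argument using Theorem \ref{typB}(3): the representation $\cR(\lambda)[h,g]=(\stackrel{0}{\cR}_{A'',V''}(\lambda)\cY h,\cY g)$ on $\dom\cY=\cD[\cR(-1)]$, together with the identity $\cL(\lambda,\mu)[h,g]=-\bigl(\cK_{02}(\lambda,\mu)\cY h,\cY g\bigr)$, where the overall minus sign is dictated by the sign convention distinguishing $\cL$ from $\cK_{02}$ as defined after Proposition \ref{stu10}. The already recorded nonnegativity of $\cK_{02}$ then yields the nonpositivity of $\cL$ on $\cD[\cR(-1)]$. Alternatively, one can deduce (2) directly from (1) using $\cR=-\cQ^{-1}$ and Theorem \ref{typB}(4), together with an identification of the two closed-form domains.

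The only nontrivial step will be verifying the pull-back identities $\sK=(\cK_{01}\cX\cdot,\cX\cdot)$ and $\cL=-(\cK_{02}\cY\cdot,\cY\cdot)$; once these are in place, no further analysis is needed. The computation is essentially bookkeeping, but care is required because $\cX$ is in general unbounded: I must stay inside $\dom\cX=\cD[\cQ(-1)]$ throughout and use only the pairing $(\cdot,\cX\cdot)$, which is exactly the pairing in which the bounded kernel $\cK_{01}$ pulls back to the form kernel $\sK$.
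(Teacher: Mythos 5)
Your strategy for part (1) is exactly the route the paper intends (the corollary is stated without proof, but it is meant to follow by inserting the representation $\cQ(\lambda)[u,v]=(\stackrel{0}{\cQ}_{A',V'}(\lambda)\cX u,\cX v)$ from Theorem \ref{typB} into the kernel and invoking the nonnegativity of $\cK_{01}$ recorded after Proposition \ref{stu10}); that half of your argument is correct as written, and the summation step $\sum_{j,k}\sK(\lambda_j,\lambda_k)[u_k,u_j]=\sum_{j,k}(\cK_{01}(\lambda_j,\lambda_k)\cX u_k,\cX u_j)\ge 0$ is legitimate because the $\cX u_j$ are just vectors of $\sM$.

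Part (2), however, contains a genuine gap at precisely the step you defer as ``bookkeeping.'' The identity $\cL(\lambda,\mu)[h,g]=-(\cK_{02}(\lambda,\mu)\cY h,\cY g)$ is false for the printed definitions: $-\cK_{02}$ equals $\cR(\lambda)+\cR(\bar\mu)-\frac{\lambda+\bar\mu}{\lambda-\bar\mu}(\cR(\lambda)-\cR(\bar\mu))$, whereas $\cL$ carries $+\frac{\lambda+\bar\mu}{\lambda-\bar\mu}$; the two differ in the sign of the difference term, not by an overall factor $-1$ (and the coefficient $\frac{\lambda+\bar\mu}{\lambda-\bar\mu}(\cR(\lambda)-\cR(\bar\mu))$ is symmetric under $\lambda\leftrightarrow\bar\mu$, so no relabelling rescues this). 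In fact the statement of part (2) as printed appears to carry a sign misprint: a short computation gives $\cL(\lambda,\mu)=2\,\frac{\lambda\cR(\lambda)-\bar\mu\cR(\bar\mu)}{\lambda-\bar\mu}$, which for the scalar inverse Stieltjes function $\cR(\lambda)=\lambda$ yields $\cL(\lambda,\lambda)=4\RE\lambda>0$ at $\lambda=1+i$, so $\cL$ is not nonpositive. The correct kernel, consistent with $\cK_{02}$, is $\cR(\lambda)[\cdot,\cdot]+\cR(\bar\mu)[\cdot,\cdot]-\frac{\lambda+\bar\mu}{\lambda-\bar\mu}(\cR(\lambda)[\cdot,\cdot]-\cR(\bar\mu)[\cdot,\cdot])=2\,\frac{\lambda\cR(\bar\mu)[\cdot,\cdot]-\bar\mu\cR(\lambda)[\cdot,\cdot]}{\lambda-\bar\mu}$, which equals $-2\lambda\bar\mu$ times the Nevanlinna kernel of $\cR(\lambda)/\lambda$ (a Nevanlinna family by Theorem \ref{cbvgfn2}(iii)) and is therefore nonpositive; with this correction your pull-back argument goes through verbatim. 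The same algebra shows $\sK(\lambda,\mu)=2\,\frac{\lambda\cQ(\lambda)-\bar\mu\cQ(\bar\mu)}{\lambda-\bar\mu}$, i.e.\ twice the Nevanlinna kernel of $\lambda\cQ(\lambda)$, which gives an even more direct proof of part (1) via Theorem \ref{cbvgfn}(iii). Finally, your alternative suggestion of deducing (2) from (1) via $\cR=-\cQ^{-1}$ does not work directly, since the kernel does not transform simply under inversion of the family; the passage through $\cR(\lambda)/\lambda$ (or through $\cK_{02}$) is the one that succeeds.
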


The next result, which gives integral representations for bounded operator-valued Stieltjes and inverse Stieltjes functions,
is well known in the scalar case, see \cite{KacK}. In the operator-valued case this result can be found in \cite[Appendix A6]{BHS2020}.
Here it is shown how these integral representations can be derived directly from the corresponding operator representations
in Theorem \ref{typB}.

\begin{theorem}\label{intrep1}
(1) Every $\bB(\sM)$-valued Stieltjes function $\cQ$ admits an integral representation of the form
\begin{equation}\label{interep2}
\cQ(\lambda)=\Gamma_\cQ+\int\limits_{\dR_+}\cfrac{d \Sigma_\cQ(t)}{t-\lambda},
\end{equation}
where $\Gamma_\cQ=\Gamma^*_\cQ\in\bB(\sM)$, $\Gamma_\cQ\ge 0$, and $\Sigma_\cQ(t)$ is a $\bB(\sM)$-valued non-decreasing function on $\dR_+$ such that $\Sigma_\cQ(0)=0$ and $\int\limits_{\dR_+}\cfrac{(d \Sigma_\cQ(t)f,f)}{t+1}<\infty$ for all $f\in\sM.$

(2) Every $\bB(\sM)$-valued inverse Stieltjes function $\cR$ admits an integral representation of the form
\begin{equation}\label{interep3}
\cR(\lambda)=\Gamma_\cR+\lambda \Pi_\cR+\int\limits_{\dR_+}\left(\cfrac{1}{t-\lambda}-\cfrac{1}{t}\right)d\Sigma_\cR(t),
\end{equation}
where $\Gamma_\cR=\Gamma^*_\cR\in\bB(\sM)$, $\Gamma_\cR\le 0,$ $\Pi^*_\cR=\Pi_\cR\in\bB(\sM),$ $\Pi_\cR\ge 0$, and $\Sigma_\cR(t)$ is a $\bB(\sM)$-valued non-decreasing function on $\dR_+$ such that $\Sigma_\cR(0)=0$ and $\int\limits_{\dR_+}\cfrac{(d\Sigma_\cR(t)f,f)}{t(t+1)}<\infty$ for all $f\in\sM.$
\end{theorem}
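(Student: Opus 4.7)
The plan is to specialize Theorem~\ref{typB} to the bounded-valued case and then apply the spectral theorem to the auxiliary nonnegative selfadjoint relation, reducing matters to scalar partial-fraction identities. Since $\cQ\in\bB(\sM)$ (respectively $\cR\in\bB(\sM)$) the closed form $\cQ(-1)[\cdot,\cdot]$ (respectively $-\cR(-1)[\cdot,\cdot]$) is bounded on $\sM$, so the closed operators $\cX,\cY$ supplied by Theorem~\ref{typB}(2)--(3) have domain $\sM$ and are thus bounded by the closed graph theorem. This reduces parts (1) and (2) to analyzing
\[
\cQ(\lambda)=\cX^*\bigl(I_\sM+(1+\lambda)V^*(A-\lambda I_\sK)^{-1}V\bigr)\cX,\qquad
\cR(\lambda)=\cY^*\bigl(-I_\sM+(1+\lambda)V^*(I_\sK-\lambda A^{-1})^{-1}V\bigr)\cY,
\]
with $A$ a nonnegative selfadjoint relation in $\sK$ and $V\in\bB(\sM,\sK)$ a contraction.

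For part (1), let $E$ denote the spectral measure of the operator part $A_o$ extended by zero on $\mul A$; then $E$ is supported on $[0,\infty)$, $\int dE(t)=I_\sK-P_{\mul A}$, and $(A-\lambda I)^{-1}=\int(t-\lambda)^{-1}dE(t)$. The partial-fraction identity $(1+\lambda)/(t-\lambda)=(1+t)/(t-\lambda)-1$ gives at once
\[
\cQ(\lambda)=\Gamma_\cQ+\int_{\dR_+}\frac{d\Sigma_\cQ(t)}{t-\lambda},
\]
with $\Gamma_\cQ:=\cX^*\bigl(I_\sM-V^*(I_\sK-P_{\mul A})V\bigr)\cX$ and $d\Sigma_\cQ(t):=(1+t)\cX^*V^*dE(t)V\cX$. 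Since $V$ is a contraction one has $V^*(I_\sK-P_{\mul A})V\le V^*V\le I_\sM$, hence $\Gamma_\cQ\ge 0$; moreover $\int (d\Sigma_\cQ(t)f,f)/(t+1)=\|(I_\sK-P_{\mul A})V\cX f\|^2<\infty$. The normalization $\Sigma_\cQ(0)=0$ is a left-continuity convention; any atom of $E$ at $0$ is recorded in the jump $\Sigma_\cQ(0^+)$.

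For part (2), a short spectral computation (decomposing $\sK=\ker A_o\oplus\cran A_o\oplus\mul A$ and verifying the formula on each summand) yields the identity $(I_\sK-\lambda A^{-1})^{-1}=I_\sK+\lambda(A-\lambda I)^{-1}$ on all of $\sK$ for every $\lambda\in\dC\setminus\dR_+$. Substituting this and applying spectral calculus via
\[
\frac{\lambda(1+\lambda)}{t-\lambda}=-\lambda+\frac{\lambda(1+t)}{t-\lambda},\qquad
\frac{\lambda(1+t)}{t-\lambda}=t(1+t)\Bigl(\frac{1}{t-\lambda}-\frac{1}{t}\Bigr)\quad(t>0),
\]
and accounting separately for the atom $E(\{0\})$ (where the second identity undercounts by $-E(\{0\})$), one arrives at
\[
\cR(\lambda)=\Gamma_\cR+\lambda\Pi_\cR+\int_{\dR_+}\Bigl(\frac{1}{t-\lambda}-\frac{1}{t}\Bigr)d\Sigma_\cR(t),
\]
with $\Gamma_\cR=-\cY^*\bigl(I_\sM-V^*V+V^*E(\{0\})V\bigr)\cY\le 0$, $\Pi_\cR=\cY^*V^*P_{\mul A}V\cY\ge 0$, and $d\Sigma_\cR(t)=t(1+t)\cY^*V^*dE(t)V\cY$. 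The factor $t(1+t)$ annihilates any atom at $0$, so $\Sigma_\cR(0)=0$, and $\int(d\Sigma_\cR(t)f,f)/(t(t+1))=\|(I-P_{\mul A}-E(\{0\}))V\cY f\|^2<\infty$.

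The main obstacle is the careful book-keeping required because both $\ker A_o$ and $\mul A$ may be nontrivial: the atoms of the extended spectral measure of $A$ at $0$ and $\infty$ produce respectively the $-\cY^*V^*E(\{0\})V\cY$ contribution to $\Gamma_\cR$ and the $\cY^*V^*P_{\mul A}V\cY$ contribution to $\Pi_\cR$, so one must verify the identity $(I-\lambda A^{-1})^{-1}=I+\lambda(A-\lambda I)^{-1}$ in full relation-theoretic generality and keep track of these atoms to place each piece of the representation correctly.
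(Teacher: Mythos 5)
Your proposal is correct and follows essentially the same route as the paper: both start from the operator representations supplied by Theorem~\ref{typB} (with $\cX,\cY$ bounded since the form domains are all of $\sM$), decompose the resolvent of $\wh A$ via the spectral measure of its operator part together with the projections onto $\mul\wh A$ and $\ker\wh A_o$ (the paper packages this as Corollary~\ref{etwasneu}), and then apply the same partial-fraction identities to identify $\Gamma_\cQ$, $\Gamma_\cR$, $\Pi_\cR$ and $d\Sigma$; your expressions for these coefficients agree with the paper's after noting $I-P_o^{\perp}-\stackrel{0}{P_{o}}=P_{\ker\wh A_o}$.
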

\begin{proof}
(1) By Theorem \ref{typB} the function $\cQ$ admits the operator representation
\[
\cQ(\lambda)
=\cZ^*\Biggl(I_\sM+(1+\lambda)V^*(\wh A-\lambda I_\sK)^{-1}V\Biggr)\cZ,\;\;\lambda\in\dC\setminus\dR_+,
\]
where $\wh A$ is a nonnegative selfadjoint l.r. in some Hilbert space $\sK$, $V\in\bB(\sM,\sK)$ is a contraction and $\cZ\in\bB(\sM).$

From Corollary \ref{etwasneu} we get
\[
\cQ(\lambda)
 =\cZ^*(I_{\sM}-V^*\wh P_oV)\cZ+\cZ^*V^*\left((I+\wh A_o)(\wh A_o-\lambda I_{\wh\sK_o})^{-1}\wh P_o\right)V\cZ,\quad
 \lambda\in\dC\setminus\dR_+,
\]
where $\wh A_o$ is the operator part of $\wh A$ and $\wh P_o$ is the orthogonal projection onto the subspace $\wh\sK_o=\sK\ominus\mul\wh A$.
Let $\wh E_o(t),$ $t\in \dR_+$,  be the resolution of identity of the operator part $\wh A_o$ in the subspace $\sK_o$. Define
\[
\begin{array}{l}
\Sigma_\cQ(t):=\int\limits_{0}^t(1+\tau)d\left(\cZ^*V^*\wh E_o(\tau)\wh P_oV\cZ \right)\Longrightarrow
 d\Sigma_\cQ(t)=(1+t)d\left(\cZ^*V^*\wh E_o(t)\wh P_oV\cZ \right),\\
 \Gamma_\cQ:=\cZ^*(I_{\sM}-V^*\wh P_oV)\cZ.
\end{array}
\]
This gives \eqref{interep2} with
\[
\Gamma_\cQ=\Gamma_\cQ^*\ge 0,\;\;\int\limits_{\dR_+}\cfrac{(d  \Sigma_\cQ(t)f,f)}{t+1}=||\wh{P_{o}}V\cZ f||^2\;\;\forall f\in\sM.
\]
(2) By Theorem \ref{typB} and the equalities \eqref{etwasneu3} and \eqref{etwasneu4} one can write
\[
\begin{array}{l}
\cR(\lambda)=\cY^*\biggl(-I_\sM-(1+\lambda)V^*(\lambda\wh A^{-1}-I_\sK)^{-1}V\biggr)\cY\\
\qquad=\cY^*\biggl(-I_{\sM}+V^*P^\perp_o V+\lambda V^*P^\perp_o V\biggr)\cY\\
\qquad + \cY^*\biggl(V^*\wh A_o(I_{\wh\sK_o}+\wh A_o)
  \left((\wh A_o-\lambda I_{\wh\sK_o})^{-1}-(I_{\wh\sK_o}+\wh A_o)^{-1}\right)\stackrel{0}{P_{o}} V\biggr)\cY,\;
    \quad\lambda\in\dC\setminus\dR_+,
\end{array}
\]
where $\stackrel{0}{P_{o}}$ is the orthogonal
projection in $\sK$ onto $\cran \wh A_0$.
Now define
\[
\begin{array}{l}
\Sigma_\cR(t):=\int_{0}^t\tau(1+\tau)d\left(\cY^*V^*\wh E_o(\tau)\stackrel{0}{P_{o}}V\cY\right)\\
\quad \Longrightarrow d\Sigma_\cR(t)=t(1+t)d\left(\cY^*V^*\wh E_o(t)\stackrel{0}{P_{o}}V\cY\right),\\
\quad \Gamma_\cR=\cY^*\left(-I_\sM+V^*({P^\perp_{o}}+\stackrel{0}{P_{o}})V\right)\cY,\; \textrm{ and } \Pi_\cR=\cY^*V^*{P^\perp_{o}}V\cY.
\end{array}
\]
Then $\Gamma_\cR\le 0$ and $ \Pi_\cR\ge 0$ are selfadjoint, and
\[
\int\limits_{\dR_+}\cfrac{d(\Sigma_\cR(t)f,f)}{t(t+1)}=||\stackrel{0}{P_{o}}V\cY f||^2\;\forall f\in\sM.
\]
This leads to \eqref{interep3}.
\end{proof}

\section{Limit values at $-\infty$ and $-0$}

The next result easily follows from \cite[Theorem 3.1, Theorem 3.2]{Simon1978}, \cite[Proposition 6]{DM1991}, \cite[Theorem 3.1]{BHSW2010} but we give an
independent proof in the present special situation.


\begin{lemma}\label{vspom}
Let $H$ be a Hilbert space and let $L(x)$, $x\in(a,b)$ be a function
whose values are bounded nonnegative selfadjoint operators acting on $H$.
Suppose that
\begin{enumerate}
\item $L(x)$ is non-decreasing on $(a,b)$,
\item $L(b):=s-\lim\limits_{x\uparrow  b} L(x)$ exists as a bounded operator,
\item $\ran L^{\half}(x)=\cR_0$ is constant for $x\in(a,b)$.
\end{enumerate}
Then
\begin{equation}
\label{nomerod}
 \lim\limits_{x\downarrow a} L^{-1}(x)[f]=\left\{\begin{array}{l}L^{-1}(a)[f],\;f\in\cD[L^{-1}(a)],\\
+\infty,\qquad  f\in\cR_0\setminus\cD[L^{-1}(a)],
\end{array}\right.
\end{equation}
\begin{equation}
\label{nomerdva}
 \lim\limits_{x\uparrow b} L^{-1}(x)[f]=L^{-1}(b)[f],\;f\in\cR_0 \subseteq \cD[L^{-1}(b)].
\end{equation}
\end{lemma}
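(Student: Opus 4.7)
My plan is to reduce everything to the standard representation
\[
 L^{-1}(x)[f]=\sup_{\epsilon>0}\bigl((L(x)+\epsilon I)^{-1}f,f\bigr)=\lim_{\epsilon\downarrow 0}\bigl((L(x)+\epsilon I)^{-1}f,f\bigr),
\]
valid for any bounded nonnegative selfadjoint operator $L(x)$; this yields $L^{-1}(x)[f]=\|L(x)^{[-1/2]}f\|^2$ when $f\in\ran L(x)^{1/2}=\cR_0$ and $L^{-1}(x)[f]=+\infty$ otherwise. Since $L(x)$ is non-decreasing in $x$, each $(L(x)+\epsilon I)^{-1}$ is non-increasing in $x$, and hence so is $L^{-1}(x)[f]$. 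In particular both endpoint limits exist in $[0,+\infty]$.

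\textbf{Limit at $b$.} First I would observe that $L(x)\le L(b)$ together with Douglas' theorem yields $\cR_0=\ran L(x)^{1/2}\subseteq\ran L(b)^{1/2}=\cD[L^{-1}(b)]$. The uniform bound $L(x)+\epsilon I\ge\epsilon I$ combined with $L(x)\to L(b)$ strongly forces $(L(x)+\epsilon I)^{-1}\to(L(b)+\epsilon I)^{-1}$ strongly, so $((L(x)+\epsilon I)^{-1}f,f)\downarrow((L(b)+\epsilon I)^{-1}f,f)$ as $x\uparrow b$; taking $\sup_{\epsilon}$ gives immediately $\lim_{x\uparrow b}L^{-1}(x)[f]\ge L^{-1}(b)[f]$. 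For the reverse inequality I would invoke Simon's monotone convergence theorem for closed sesquilinear forms \cite{Simon1978}: the non-increasing family $L^{-1}(x)[\cdot]$ has the common form-domain $\cR_0$, so its pointwise limit on $\cR_0$ is a closed form that coincides with the closed form of the strong resolvent limit of the associated linear relations $L^{-1}(x)$. Strong resolvent convergence $L^{-1}(x)\to L^{-1}(b)$ follows from the SOT-identity $(L^{-1}(x)+\epsilon I)^{-1}=L(x)(I+\epsilon L(x))^{-1}\to L(b)(I+\epsilon L(b))^{-1}=(L^{-1}(b)+\epsilon I)^{-1}$, which is routine given $L(x)\to L(b)$ strongly and uniform boundedness of all the operators involved.

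\textbf{Limit at $a$.} The strong operator limit $L(a):=s\text{-}\lim_{x\downarrow a}L(x)$ exists because $L(x)$ is non-increasing and bounded below by $0$ as $x\downarrow a$; moreover $L(a)\le L(x)$ gives $\cD[L^{-1}(a)]=\ran L(a)^{1/2}\subseteq\cR_0$. Now the forms $L^{-1}(x)[\cdot]$ are non-decreasing in the form order as $x\downarrow a$ on the common domain $\cR_0$. For $f\in\cD[L^{-1}(a)]$ the increasing version of Simon's monotone convergence theorem yields $L^{-1}(x)[f]\uparrow L^{-1}(a)[f]$ via the same strong-resolvent identification as above. For $f\in\cR_0\setminus\cD[L^{-1}(a)]$ the supremum $\sup_{x}L^{-1}(x)[f]$ must be $+\infty$: if it were finite, the increasing pointwise limit of the family would be a closed form on $\cR_0$ which, by the srs-identification, would equal $L^{-1}(a)[\cdot]$ and so place $f$ in $\cD[L^{-1}(a)]$, a contradiction.

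\textbf{Main obstacle.} The delicate point is the identification, at the upper endpoint, of the pointwise limit of the non-increasing family $L^{-1}(x)[\cdot]$ with the closed form of $L^{-1}(b)$ itself, rather than with some strictly larger closed form. In general the pointwise limit of a decreasing sequence of closed nonnegative forms coincides with the form of the srs-limit operator only after a lower-semicontinuous regularization; it is precisely the hypothesis $\ran L(x)^{1/2}=\cR_0$ (independent of $x$) that makes the pointwise limit closed on $\cR_0$ and thus ensures agreement without any regularization step. Everything else reduces to routine bookkeeping with monotone limits of bounded resolvents.
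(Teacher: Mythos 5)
Your treatment of the limit at $a$, i.e.\ of \eqref{nomerod}, is correct and follows a genuinely different route from the paper's: for the family $L^{-1}(x)[\cdot]$, which \emph{increases} as $x\downarrow a$, Simon's Theorem~3.1 in \cite{Simon1978} really does assert that the pointwise limit form, on the set where the supremum is finite, is closed and represents the strong resolvent limit, and your identity $(L^{-1}(x)+I)^{-1}=L(x)(I+L(x))^{-1}\to L(a)(I+L(a))^{-1}$ identifies that limit as $L^{-1}(a)$; both lines of \eqref{nomerod} then drop out at once. The paper instead proves the dichotomy $\sup_x L^{-1}(x)[f]<\infty\iff f\in\ran L^{\half}(a)$ by a weak-compactness argument and then computes $\lim_{x\downarrow a}\|L^{[-\half]}(x)L^{\half}(a)h\|$ by hand from a variational formula; you outsource exactly that work to \cite{Simon1978}, which the authors themselves say is possible.

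The gap is in \eqref{nomerdva}. For the family $L^{-1}(x)[\cdot]$, which \emph{decreases} as $x\uparrow b$, Simon's Theorem~3.2 does \emph{not} say that the pointwise limit is a closed form representing the strong resolvent limit; it says the strong resolvent limit is represented by the closure of the \emph{regular part} of the pointwise limit $t_\infty$. Combined with your (correct) computation that the strong resolvent limit is $L^{-1}(b)$, this only yields $\overline{(t_\infty)_r}=L^{-1}(b)[\cdot]$, hence $t_\infty[f]\ge L^{-1}(b)[f]$ on $\cR_0$ --- the inequality you already had for free. The missing inequality $\lim_{x\uparrow b} L^{-1}(x)[f]\le L^{-1}(b)[f]$ amounts to showing that $t_\infty$ has no singular part, and your justification --- that constancy of $\ran L^{\half}(x)$ forces $t_\infty$ to be \emph{closed} on $\cR_0$ --- is both unproven and false in general: if the lemma holds, then $t_\infty$ is the restriction of $L^{-1}(b)[\cdot]$ to $\cR_0$, which is closable but \emph{not} closed whenever $\cR_0\subsetneq\ran L^{\half}(b)=\cD[L^{-1}(b)]$, and such examples exist under hypotheses (1)--(3) (e.g.\ diagonal $L(x)$ with eigenvalues $\lambda_n(x)=\mu_n$ for $x\le b-1/n$ rising to $\lambda_n(b)=1$). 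Indeed, were $t_\infty$ closed with domain $\cR_0$, Simon's theorem would make the strong resolvent limit a relation with form domain $\cR_0$, contradicting that this limit is $L^{-1}(b)$. So the substantive half of \eqref{nomerdva} --- precisely the place where hypothesis (3) must actually be put to work, as it is in the paper's explicit computation with $\sup_{g}|(L^{\half}(a)h,g)|^2/(L(x)g,g)$ at the other endpoint --- is not established by your argument.
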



\begin{proof}
By assumption (1) the strong limit value $L(a)$ exists as a bounded operator. Clearly.
$L(a)\le L(x)\le L(b)$ for each $x\in (a,b)$. Hence, $\ran
L^{\half}(a)\subseteq\cR_0 \subseteq\ran L^{\half}(b)$. The inverses $L^{-1}(x)$, $L^{-1}(a)$ and
$L^{-1}(b)$ are in general nonnegative selfadjoint l.r.'s
The corresponding closed sesquilinear forms are defined on $\cR_0$,
$\ran L^{\half}(a)$, and $\ran L^{\half}(b)$, respectively. Since
$L(x_1)\le L(x_2)$ if $x_1<x_2$, then $L^{-1}(x_1)\ge L^{-1}(x_2)$.
In addition $L^{-1}(a)\ge L^{-1}(x)\ge L^{-1}(b)$.
Let us prove that
\begin{equation}
 \label{domchar}
 \sup\limits_{x\in(a,b)}L^{-1}(x)[f]<\infty\iff f\in \cD[L^{-1}(a)]=\ran L^{\half}(a).
\end{equation}
If $f\in \cD[L^{-1}(a)]$, then $L^{-1}(x)[f]\le L^{-1}(a)[f]<\infty$.
Conversely, assume that
\[
 L^{-1}(x)[f]\le C\quad\mbox{for all}\quad x\in (a,b)\quad \mbox{and for some}\quad f\in \cD[L^{-1}(x)]=\cR_0.
\]
Then there exists a sequence of numbers $\{x_n\}\subset (a,b)$ such that the sequence of vectors $\{g_n:=L^{-\half}(x_n)f\}$
converges weakly to some vector $\f\in\overline\cR_0$, i.e.,
\[
 \lim\limits_{n\to\infty}(g_n, h)=(\f,h)\quad\mbox{for all}\quad h\in \overline\cR_0.
\]
Here $L^{-\half}(x)=\left(L^{\half}(x)\right)^{-1}$ is the Moore-Penrose pseudoinverse.
Since
$$
\lim\limits_{x\downarrow a}L(x)h=L(a)h \quad\Rightarrow\quad
\lim\limits_{x\downarrow a}L^{\half}(x)h=L^{\half}(a)h
$$
for all $h$, one gets
\[
\begin{array}{l}
(f,h)=(L^{\half}(x_n)g_n,h)=(g_n,L^{\half}(x_n)h) \\
\qquad\quad =(g_n,L^{\half}(a)h)+(g_n,(L^{\half}(x_n)-L^{\half}(a))h) \to(\f,L^{\half}(a)h)\; (n\to\infty)
\end{array}
\]
It follows that $f=L^{\half}(a)\f.$

Next it is shown that
\[
\lim\limits_{x\downarrow a}||L^{-\half}(x) L^{\half}(a)h||=||h||,\; h\in\cran L(a).
\]
Since $\ran L^{\half}(a) \subseteq \ran L^{\half}(x)$, one has
\[
\sup\limits_{g\in
H}\cfrac{|(L^{\half}(a)h,g)|^2}{(L(x)g,g)}=||L^{-\half}(x)L^{\half}(a)h||^2.
\]
Then for an arbitrary $\varepsilon>0$ there exists $g_\varepsilon$
such that
\[
||L^{-\half}(x)L^{\half}(a)h||^2-\varepsilon\le\cfrac{|(L^{\half}(a)h,g_\varepsilon)|^2}{(L(x)g_\varepsilon,g_\varepsilon)}
\le ||L^{-\half}(x)L^{\half}(a)h||^2.
\]
Let
\[
C:=\sup\limits_{x\in(a,b)}||L^{-\half}(x)L^{\half}(a)h||^2.
\]
Then by \eqref{domchar} $C<\infty$. Since
\[
\lim\limits_{x\downarrow
a}||L^{-\half}(x)L^{\half}(a)h||^2=\sup\limits_{x\in(a,b)}||L^{-\half}(x)L^{\half}(a)h||^2,
\]
we get
\[
C-\varepsilon\le\cfrac{|(L^{\half}(a)h,g_\varepsilon)|^2}{(L(a)g_\varepsilon,g_\varepsilon)}
\le C
\]
It follows that
\[
||h||^2=\sup\limits_{g\in
H}\cfrac{|(L^{\half}(a)h,g)|^2}{(L(a)g,g)}=C.
\]
Thus
\[
\lim\limits_{x\downarrow a}||L^{-\half}(x)L^{\half}(a)h||^2=||h||^2,\;
h\in\cran L(a).
\]
Relation \eqref{nomerod} is proved. Relation \eqref{nomerdva} can be
proved similarly.
\end{proof}

\begin{remark}\label{vspomRem}
If $L(x)$ is non-increasing on $(a,b)$ then $L^{-1}(x)$ is non-decreasing on $(a,b)$
and \eqref{nomerod} and \eqref{nomerdva} are replaced by
\[
 \lim\limits_{x\downarrow a} L^{-1}(x)[f]=L^{-1}(a)[f],\;f\in\cR_0 \, \subseteq \cD[L^{-1}(a)],
\]
\[
 \lim\limits_{x\uparrow b} L^{-1}(x)[f]=\left\{\begin{array}{l}L^{-1}(b)[f],\;f\in\cD[L^{-1}(b)],\\
+\infty,\qquad  f\in\cR_0\setminus\cD[L^{-1}(b)].
\end{array}\right.
\]
\end{remark}

\begin{proposition}
\label{jbfzo}
Let $\cQ(\lambda)\in\wt S(\sM)$ (respectively $\in \wt S^{(-1)}(\sM)$). Then the strong resolvent limits
\[
s-{\rm R}-\lim\limits_{x\uparrow 0}\cQ(x)=:\cQ(-0),\;s-{\rm R}-\lim\limits_{x\downarrow -\infty}\cQ(x)=:\cQ(-\infty)
\]
exist and here $\cQ(-0)$ and $\cQ(-\infty)$ are nonnegative (resp., nonpositive) selfadjoint l.r.'s in $\sM$.
In addition, for $\cQ(\lambda)\in\wt S(\sM)$ one has
\[
\begin{array}{l}
 \lim\limits_{x\uparrow 0}\cQ(x)[g]=\cQ(-0)[g], \; g\in\cD[\cQ(-0)], \\
 \lim\limits_{x\downarrow -\infty}\cQ(x)[g]=\cQ(-\infty)[g], \; g\in\cD[\cQ(-1)]\subseteq \cD[\cQ(-\infty)].
\end{array}
\]
and for $\cQ(\lambda)\in\wt S^{(-1)}(\sM)$ one has
\[
\begin{array}{l}
 \lim\limits_{x\uparrow 0}\cQ(x)[g]=\cQ(-0)[g], \; g\in\cD[\cQ(-1)]\subseteq \cD[\cQ(-0)], \\
 \lim\limits_{x\downarrow -\infty}\cQ(x)[g]=\cQ(-\infty)[g], \; g\in\cD[\cQ(-\infty)].
\end{array}
\]
\end{proposition}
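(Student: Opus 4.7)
The strategy is to reduce the assertions to classical monotone convergence for nonnegative selfadjoint linear relations, exploiting the fact that on the negative semi-axis a Stieltjes family is a nondecreasing family of nonnegative selfadjoint l.r.'s, while an inverse Stieltjes family is a nondecreasing family of nonpositive l.r.'s. The representations from Theorem \ref{typB}, together with the integral representation of Theorem \ref{intrep1}, will supply both the monotonicity and the common form-domain structure on $(-\infty,0)$ needed for the monotone convergence argument.

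First I would verify monotonicity on the form level. By Theorem \ref{typB}(2), for a Stieltjes family
\[
 \cQ(x)[u,v] = \left(\bigl(I_\sM + (1+x)V^*(A-xI)^{-1}V\bigr)\cX u,\, \cX v\right),\quad u,v \in \dom\cX = \cD[\cQ(-1)],
\]
and by Theorem \ref{intrep1}(1) equivalently $\cQ(x) = \Gamma_\cQ + \int_{\dR_+}(t-x)^{-1}d\Sigma_\cQ(t)$. Since $\partial_x(t-x)^{-1}=(t-x)^{-2}\geq 0$ for $t\geq 0$, $x<0$, this shows that $x\mapsto \cQ(x)[u]$ is nondecreasing on $(-\infty,0)$ for every $u\in\cD[\cQ(-1)]$. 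The analogous representation from Theorem \ref{typB}(3) together with \eqref{interep3} shows that an inverse Stieltjes family $\cR$ is also nondecreasing, but nonpositive, on $(-\infty,0)$; equivalently $-\cR$ is nonincreasing and nonnegative there.

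Next I would invoke the monotone convergence principle for the resolvents of nonnegative selfadjoint l.r.'s. The family $(\cQ(x)+I)^{-1}$, $x\in(-\infty,0)$, consists of bounded selfadjoint operators in $[0,I]$, nonincreasing as $x\uparrow 0$ and nondecreasing as $x\downarrow -\infty$. Hence strong limits $B_{-0},\,B_{-\infty}\in\bB(\sM)$ with $0\leq B_{-0},B_{-\infty}\leq I$ exist, and the nonnegative selfadjoint l.r.'s $\cQ(-0):=B_{-0}^{-1}-I$, $\cQ(-\infty):=B_{-\infty}^{-1}-I$ are by definition the strong resolvent limits s-R-$\lim_{x\uparrow 0}\cQ(x)$ and s-R-$\lim_{x\downarrow -\infty}\cQ(x)$. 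The same argument applied to $-\cR$ yields nonpositive selfadjoint l.r.'s $\cR(-0)$, $\cR(-\infty)$ in the inverse Stieltjes case.

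Finally I would deduce the form-level convergence using Kato's monotone convergence theorem for closed nonnegative sesquilinear forms \cite{Ka}. In the Stieltjes case at $x\uparrow 0$ (the increasing case), Kato's theorem identifies the limiting form with the one associated with $\cQ(-0)$ on its effective domain, so $\cQ(x)[g]\to \cQ(-0)[g]$ monotonically for $g\in\cD[\cQ(-0)]$. At $x\downarrow -\infty$ (the decreasing case) the forms share the constant domain $\cD[\cQ(-1)]$, the pointwise limit $\cQ_\infty[u]=\lim_{x\downarrow -\infty}\cQ(x)[u]$ exists on this domain, and Lemma \ref{vspom} together with its accompanying remark, applied to the bounded representative $\stackrel{0}{\cQ}_{\wh A,V}(x)$ from \eqref{dfcf1}, identifies the closure of $\cQ_\infty$ with the form of $\cQ(-\infty)$; thus $\cD[\cQ(-1)]\subseteq\cD[\cQ(-\infty)]$ and convergence holds on $\cD[\cQ(-1)]$. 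For inverse Stieltjes families the roles of the two endpoints are reversed because the monotonicity of $-\cR$ is opposite to that of $\cQ$: the decreasing case applies at $-0$, yielding $\cD[\cR(-1)]\subseteq\cD[\cR(-0)]$ with convergence on $\cD[\cR(-1)]$, while the increasing case applies at $-\infty$, giving convergence on $\cD[\cR(-\infty)]$. The main obstacle is precisely this decreasing-limit case, since the pointwise limit of closed forms on a common domain need not be closed; the key is that the type (B) property from Theorem \ref{typB} (constant form domain plus holomorphy of $\cQ(\lambda)[u,v]$ on $\cD[\cQ(-1)]$) makes the hypotheses of Kato's theorem applicable and forces the closure of $\cQ_\infty$ to coincide with the form of the strong resolvent limit constructed in the previous step.
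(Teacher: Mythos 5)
Your proposal is correct and follows essentially the paper's route: the paper likewise reduces everything to monotone convergence of the bounded resolvents $(\cQ(x)+I)^{-1}=\tfrac12\bigl(I-\Omega\bigl(\tfrac{1+x}{1-x}\bigr)\bigr)$ (with monotonicity read off directly from $\Omega\in\cRS(\sM)$ being Nevanlinna on $(-1,1)$ rather than from the integral representation), and then applies Lemma \ref{vspom} and Remark \ref{vspomRem} together with the constant form domain guaranteed by Theorem \ref{typB}. The one small redirection: Lemma \ref{vspom} should be applied to $L(y)=I-\Omega(y)=2(\cQ(x)+I)^{-1}$, so that $L^{-1}(y)[f]=\tfrac12\bigl(\cQ(x)[f]+\|f\|^2\bigr)$, rather than to the bounded representative $\stackrel{0}{\cQ}_{\wh A,V}(x)$; with that substitution your argument matches the paper's proof.
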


\begin{proof}
Let $\cQ(\lambda)\in\wt S(\sM)$ and let $\Omega(z)\in\cRS(\sM)$ be as in Lemma \ref{TH1},
so that \eqref{formula1} holds.
Then the strong non-tangential limit values $\Omega(\pm 1)$ exist
and are selfadjoint contractions such that $\Omega(-1)\le \Omega(y)\le \Omega(1)$, $y\in (-1,1)$.
The equality \eqref{formula1} shows that
\[
 (\cQ(\lambda)+I)^{-1}=\cfrac{1}{2}\left(I-\Omega\left(\cfrac{1+\lambda}{1-\lambda}\right)\right)
\]
and here the values $L(y):=I-\Omega(y)$ are nonnegative and non-increasing for $y\in(-1,1)$.
By Theorem \ref{typB} $\cD[\cQ(x)]$, $x<0$, and hence also $\ran (I-\Omega(y))^{\half}$, $y\in(-1,1)$, is constant (cf. \eqref{ravenstva1}).
Hence it follows from Lemma \ref{vspom} and Remark \ref{vspomRem}, cf. also Proposition \ref{clfrm},
that
\[
 \cQ(-0)=s-{\rm R}-\lim\limits_{x\uparrow 0}\cQ(x)=-I+2(I-\Omega(1))^{-1}
\]
and $\lim\limits_{x\uparrow 0}\cQ(x)[g]=\cQ(-0)[g]$, $g\in\cD[\cQ(-0)]$. Similarly one has
\[
 \cQ(-\infty)=s-{\rm R}-\lim\limits_{x\downarrow -\infty}\cQ(x) = -I+2(I-\Omega(-1))^{-1}
\]
and $\lim\limits_{x\downarrow -\infty}\cQ(x)[g]=\cQ(-\infty)[g]$, $g\in\cD[\cQ(-1)]\subseteq \cD[\cQ(-\infty)]$.

In the case that $\cQ(\lambda)\in\wt S^{(-1)}(\sM)$ one applies formula \eqref{formula2} in Lemma~\ref{TH1},
\[
 (\cQ(\lambda)-I)^{-1}=-\cfrac{1}{2}\left(I+\Omega\left(\cfrac{1+\lambda}{1-\lambda}\right)\right).
\]
Here the values $L(y):=I+\Omega(y)$ are nonnegative, non-decreasing for $y\in(-1,1)$,
and by Theorem \ref{typB} the domain $\cD[\cQ(x)]$, $x<0$, thus also $\ran (I+\Omega(y))^{\half}$, $y\in(-1,1)$, is constant.
Now the statements follow from Lemma \ref{vspom} and Proposition \ref{clfrm}.
\end{proof}

It is possible that $\Omega(1)=I$ and therefore e.g. for a Stieltjes family one can have $\cQ(-0)=\{0\}\times\sM$;
also the inclusion $\cD[\cQ(-1)]\subseteq \cD[\cQ(-\infty)]$ can be strict.
Consider, for instance, a Stieltjes function $\cQ(\lambda)=-\cfrac{1}{\lambda}\, H$, $\lambda\neq 0$,
where $H\geq 0$ is an unbounded selfadjoint operator in the Hilbert space $\sM$ with $\ker H=\{0\}$.
Then $\cQ(-0)=\{0\}\times\sM$ with $\cD[Q(-0)]=\{0\}$ and $\cQ(-\infty)=0$, the zero operator on $\sM$,
so that $\cD[\cQ(\lambda)]=\dom H^{\half} \subset \cD[\cQ(-\infty)]=\sM$.


\end{document}